\documentclass{amsart}
\pdfoutput=1
\usepackage[textsize=tiny,textwidth=4cm]{todonotes}
\usepackage{lipsum}
\usepackage{amsmath}
\usepackage{amssymb}
\usepackage{amsfonts}
\usepackage{mathtools}
\usepackage{array}
\usepackage{multirow}
\usepackage{capt-of}
\usepackage[skip=10pt,font=footnotesize,width=0.9\textwidth]{caption}
\usepackage[shortcuts]{extdash}
\usepackage{graphicx}
\usepackage{epstopdf}
\usepackage{algorithmic}
\usepackage{listings}
\usepackage[
  algo2e,
  algosection,
  linesnumbered,
  ruled
]{algorithm2e}
\usepackage[textsize=tiny]{todonotes}
\usepackage{bbm}
\usepackage{colortbl}
\usepackage{wrapfig}
\usepackage{placeins}
\usepackage[bottom=3.5cm,left=3.6cm,right=3.8cm]{geometry}
\usepackage[capitalize,nameinlink]{cleveref}
\crefname{section}{section}{sections}
\crefname{subsection}{subsection}{subsections}
\Crefname{section}{Section}{Sections}
\Crefname{subsection}{Subsection}{Subsections}
\crefformat{equation}{\textup{#2(#1)#3}}
\crefrangeformat{equation}{\textup{#3(#1)#4--#5(#2)#6}}
\crefmultiformat{equation}{\textup{#2(#1)#3}}{ and \textup{#2(#1)#3}}
{, \textup{#2(#1)#3}}{, and \textup{#2(#1)#3}}
\crefrangemultiformat{equation}{\textup{#3(#1)#4--#5(#2)#6}}%
{ and \textup{#3(#1)#4--#5(#2)#6}}{, \textup{#3(#1)#4--#5(#2)#6}}{, and \textup{#3(#1)#4--#5(#2)#6}}

\Crefformat{equation}{#2Equation~\textup{(#1)}#3}
\Crefrangeformat{equation}{Equations~\textup{#3(#1)#4--#5(#2)#6}}
\Crefmultiformat{equation}{Equations~\textup{#2(#1)#3}}{ and \textup{#2(#1)#3}}
{, \textup{#2(#1)#3}}{, and \textup{#2(#1)#3}}
\Crefrangemultiformat{equation}{Equations~\textup{#3(#1)#4--#5(#2)#6}}%
{ and \textup{#3(#1)#4--#5(#2)#6}}{, \textup{#3(#1)#4--#5(#2)#6}}{, and \textup{#3(#1)#4--#5(#2)#6}}
\usepackage{xr}

\makeatletter
\DeclareRobustCommand*{\bbl@ap}[1]{\textormath{\textsuperscript{#1}}{^{\mathrm{#1}}}}%
\DeclareRobustCommand*{\bbl@ped}[1]{\textormath{$_{\mbox{\fontsize\sf@size\z@ \selectfont#1}}$}{_\mathrm{#1}}}%
\let\ap\bbl@ap
\let\ped\bbl@ped
\makeatother

\newtheorem{theorem}{Theorem}[section]

\newtheorem{proposition}[theorem]{Proposition}

\newtheorem{remark}[theorem]{Remark}

\usepackage{dsfont}
\usepackage{dutchcal}
\usepackage{bm}
\usepackage[normalem]{ulem}

\usepackage{lipsum}
\usepackage{amsfonts}
\usepackage{epstopdf,epstopdf}
\usepackage{algorithmic}
\Crefname{ALC@unique}{Line}{Lines} 
\usepackage[labelfont=rm]{subfig} 
\ifpdf
  \DeclareGraphicsExtensions{.eps,.pdf,.png,.jpg}
\else
  \DeclareGraphicsExtensions{.eps}
\fi

\newcommand{\utdc}{\mathcal{u}} 
 
\newcommand{\OoI}{\mathcal{z}}
\newcommand{\ldu}{u}

\newcommand{\ldw}{w}

\newcommand{\imagI}{\mathcal{i}}




\newcommand{\paramu}{\bm{m}}

\newcommand{\U}{\mathcal{U}}
\newcommand{\Uh}{\mathcal{U}_h}
\newcommand{\Urb}{\mathcal{U}_k}

\newcommand{\Nh}{N_h}
\newcommand{\Nk}{N_k}
\newcommand{\Nz}{N_z} 

\newcommand{\Nr}{N_{r}} 

\newcommand{\Pspace}{\mathcal{P}}

\def\Week{\mathcal{W}}
\def\CC{\mathbb{C}}
\def\CK{\beta_b}

\def\wr{\mathcal{w}_R}
\def\wi{\mathcal{w}_I}
\def\reals{s_R }
\def\imags{s_I }

\usepackage{amsopn}

\ifpdf
  \DeclareGraphicsExtensions{.eps,.pdf,.png,.jpg}
\else
  \DeclareGraphicsExtensions{.eps}
\fi


\title[Model order reduction for seismic applications]{Model order reduction for seismic applications}

\author{Rhys Hawkins}
\address{School of Computing and Research School of Earth Sciences, Australian National University, ACT 2601, Canberra, Australia. Formerly at Department of Earth Sciences, Utrecht University, The Netherlands {rhys.hawkins@anu.edu.au}.}

\author{Muhammad Hamza Khalid}
\address{Department of Applied Mathematics,
	University of Twente, P.O. Box 217, 7500 AE Enschede, The Netherlands
  {mhamzakhalid15@gmail.com}.}

\author{Matthias Schlottbom}
\address{Department of Applied Mathematics,
	University of Twente, P.O. Box 217, 7500 AE Enschede, The Netherlands
 {m.schlottbom@utwente.nl}.}

\author{Kathrin Smetana}
\address{Department of Mathematical Sciences, Stevens Institute of Technology, 1 Castle Point Terrace, Hoboken, NJ 07030, United States of America. {ksmetana@stevens.edu}
}
%

\externaldocument{aux_all}

\date{\today}

\thanks{This work is part of the research programme DeepNL, financed by the Dutch Research Council (NWO) under project number DeepNL.2018.033. Rhys Hawkins was partially supported by the Australian Government through the Australian Research Council's Discovery Projects funding scheme (project DP200100053). }

\subjclass{65N12, 
65M15, 
35B30, 
35L05, 
65N30, 
86A15, 
86-08, 
86-10 }

\keywords{Model order reduction, full waveform modeling, seismic wave equation,  reduced basis methods, Kolmogorov $n-$width, a posteriori error estimate, finite element method}

\begin{document}
	
\begin{abstract}
In this paper we propose a model order reduction approach to speed up the computation of seismograms, i.e. the solution of the seismic wave equation evaluated at a receiver location, for different model parameters. This is highly relevant for seismic applications such as full waveform inversion, seismic tomography, or monitoring tools of seismicity that are computationally challenging as the discretized (forward) model has often a huge number of unknowns and needs to be solved many times for different model parameters.  Our approach achieves a reduction of the unknowns by a factor of approximately 1000 for various numerical experiments for a 2D subsurface model of Groningen, the Netherlands (a region known for its seismic activity) even if the (known) wave speeds of the subsurface are relatively varied.  Moreover, when using multiple cores to construct the reduced model we can approximate the (time domain) seismogram in a lower wall clock time than an implicit Newmark-beta method. To realize this reduction, we exploit the fact that seismograms are low-pass filtered for the observed seismic events. We thus consider the Laplace-transformed problem in the frequency domain and implicitly restrict ourselves to the frequency range of interest by adjusting the parameters of a source function which is a popular model in computational seismology for the temporal response of an earthquake. Therefore, we can avoid the high frequencies that would require many reduced basis functions to reach the desired accuracy and generally make the reduced order approximation of wave problems challenging. Instead, we can prove for our ansatz that for a fixed subsurface model the reduced order approximation converges exponentially fast in the frequency range of interest in the Laplace domain. We build the reduced model from solutions of the Laplace-transformed problem via a (Proper Orthogonal Decomposition-)Greedy algorithm targeting the construction of the reduced model to the time domain seismograms; the latter is achieved by using an a posteriori error estimator that does not require computing any time domain counterparts. Finally, we show that we obtain a stable reduced model thus overcoming the challenge that standard model reduction approaches do not necessarily yield a stable reduced model for wave problems.
\end{abstract}
	
\maketitle

\section{Introduction}\label{sect:intro}
Seismic applications, such as seismic tomography \cite{Rawlinson2014,Tromp2005SeismicKernels}, full waveform inversion (FWI) \cite{bozdaug2011misfit,fichtner2010full}, generating synthetic data for early warnings \cite{panza2012seismic} or neural-network based monitoring tools of seismicity \cite{kaufl2016solving}, require solving the parametric seismic wave equation multiple times for different parameter configurations; see \cref{sec:seismo_application} for details. 
However, for realistically sized problems in seismology, the wavefield approximation using established discretization methods \cite{fichtner2009simulation,komatitsch2002spectrala,Tromp2019SeismicScales} is computationally expensive, making full waveform  modeling for many parameters and, thus, the before-mentioned applications computationally infeasible.

In this paper, we use projection-based model order reduction (MOR) via reduced basis (RB) methods (see e.g., \cite{Rozza2008ReducedEquations}) to address the above-mentioned computational challenges. 
The general idea of RB methods for time-dependent problems is to construct a low-dimensional RB space from the numerically approximated solution trajectories of the high-dimensional problem for certain well-selected parameters (snapshots) in a given parameter set. 
This can be achieved in a (quasi-)optimal manner  \cite{MR2821591,buffa2012priori,devore2013greedy,HaasdonkPODGreedyConvergence} by RB methods such as the Proper Orthogonal Decomposition (POD) \cite{MR1204279,MR1868765,MR0910462}, the Greedy algorithm \cite{grepl2005,veroy2003posteriori}, or the POD-Greedy algorithm \cite{haasdonk2008reduced}. 
The POD method relies on an SVD-based compression of the set of solutions (evaluated in the time grid points) for all parameters in a dense so-called training set. 
The POD-Greedy algorithm iteratively enriches the RB space by POD basis functions that approximate (in time) the solution trajectory for a selected parameter from the given parameter set; the latter maximizes (an a posteriori error estimator of) the RB approximation error. 
The Greedy algorithm iteratively enriches the RB space by using the solution trajectory at which the a posteriori error estimator is maximized. 
By projecting the full order model (FOM) onto the low-dimensional RB space, we obtain a small reduced order model (ROM).

However, the construction of stable and small ROMs for the seismic wave equation can be challenging. 
For non-smooth solution trajectories of the wave equation, the RB approximation error may decay as slowly as $\mathcal{O}(n^{-1/2})$ \cite{greif2019decay}, indicating large-sized ROMs. 
In addition, for stable FOMs the above-mentioned RB methods may not guarantee stable ROMs for wave problems \cite{Peng2016SymplecticSystems}. 
Some attempts focusing on the faster RB approximation error decay use (non-linear) transformations (cf., \cite{welper2017interpolation,reiss2018shifted,taddei2020registration}) or neural networks (cf., \cite{lee2020model,MR4549101}). Moreover, the stability of the ROMs can be achieved by using (space-time) ultraweak formulations (cf., \cite{brunken2019parametrized,Dahmen2012,dahmen2014double,henning2022ultraweak}), discontinuous Petrov-Galerkin methods (cf., \cite{bui2013constructively,demkowicz2011class,zitelli2011class}), or symplectic MOR (cf., \cite{Peng2016SymplecticSystems,pagliantini2021dynamical}).  

In this paper, we introduce an RB approach tailored to the parametric seismic wave problem to obtain a fast and accurate approximation of the time domain seismograms (wavefield at receiver locations).  
Our approach yields provably stable and small-sized ROMs for a fixed subsurface model of Groningen, the Netherlands (a region known for its seismic activity).  
We demonstrate this in numerical results, where, for a fixed subsurface model, we observe a reduction from 233,318 unknowns to an average of less than 176 without compromising the $L^2$-norm accuracy; the latter remains about $10^{-3}$ for the time domain seismograms. 
Additionally, we attain a similar accuracy using 300 RB functions when there are relative variations in the shear-wave velocities of the subsurface. 
By using multiple cores in the construction of an RB space, we can approximate time domain seismograms faster than the implicit Newmark-beta method \cite{newmark1959method}. 

We achieve exponentially converging and stable ROMs by restricting the frequency domain solution computations to the low-frequency range; the latter is motivated by the standard practices in seismology where the recorded seismograms are post-processed by using low-pass filters to avoid various noise sources (e.g., anthropogenic noise) and sensitivity of the seismometer recording the data \cite{Pratt2012SeismicModel,du2000noise,sriyanto2023performance}\footnote{This is important for the applications we consider, as we are interested in information in the low-frequency regime, which is also important in FWI to avoid ``cycle skipping'' \cite{treister2017full,van20143d}. A cycle skipping refers to being stuck in local minima due to the oscillatory nature of seismograms used in the misfit function (a lesser problem for low frequencies) that may yield inaccurate earth models with gradient-based optimization methods.}. 
Here, the low-frequency restriction allows us to focus on less oscillatory solutions, which are easier to represent using few RB functions. 
Conversely, solutions at higher frequencies tend to be sensitive to small-scale changes due to their oscillatory nature and require more RB functions to represent the wavefield accurately. 
This matches our theoretical result in \cref{prop:kolmogorov} on the exponential decay of the RB approximation errors where increasing the maximum frequency ultimately results in a loss of the exponential convergence. In addition, we show in the frequency domain that the restriction to the low frequencies is important to ensure a uniformly well-posed seismic wave problem, where we lose the stability of the finite element and the RB approximation in the high frequency regime.

We implicitly remove the high-frequency components of the wavefield by adjusting the parameters of the Ricker wavelet which acts as the source function and is a popular model in computational seismology to simulate the temporal response of an earthquake.
Here, we emphasize that for the considered numerical test case, the energies of the solutions for different frequencies behave similar to the source function and we can choose a very small cut-off frequency to guarantee an acceptable numerical error (see e.g., \cref{fig:Ricker_wavelet_TD_FD_sol}). We neglect high frequencies with energy values smaller than the acceptable error. 

To construct the RB space, we present frequency domain variants of the POD-Greedy algorithm and the Greedy algorithm, which are driven by an a posteriori error estimator of the RB approximation error in the time domain seismograms. 
This a posteriori error estimator in \cref{prop:A_post_est} is one of the major contributions of this paper. The estimator has the significant advantage that we avoid any time-dependent solution computations during the RB space construction, which can otherwise be computationally demanding due to the temporal nature of other error estimators \cite{glas2020reduced,amsallem2014error}. 
The a posteriori error estimator requires computing stability constants, which is generally done by solving generalized eigenvalue problems or by using the successive constraint method \cite{MR2367928}. 
However, we can estimate the stability constants at low cost due to an explicit expression of its rigorous lower bound, which is very close to the discrete stability constant as we show in the numerical results. 
The a posteriori error estimators rely on numerical Laplace inversion via Weeks’ method, which is also used to compute the time domain approximations. 
Here, as one (minor) novel contribution, we rapidly determine the optimal frequency contour using RB approximations within the respective optimization problem from \cite{Weideman1999AlgorithmsTransform}, which would otherwise be computationally infeasible.

The Laplace domain approach for parabolic problems by N. Guglielmi and M. Mannuci \cite{guglielmi2022contour} shares some similarities with our approach but is not suitable for hyperbolic problems as discussed in  \cite[p. 6 \& 27]{guglielmi2022contour}. 
D.B.P. Huynh et al. \cite{huynh2011laplace} constructed the RB spaces for wave problems via a Greedy algorithm driven by an a posteriori error estimator, where a Butterworth filter was used to attenuate higher frequencies in the output. 
However, the evaluation of their a posteriori error estimator requires solving a contour integral over an infinite imaginary axis, which can be potentially problematic for higher frequencies and is often not necessary.
C. Bigoni and J.S. Hesthaven \cite{Bigoni2020SimulationbasedMonitoring} also used numerical Laplace inversion via Weeks' method and a (symplectic-)POD method from \cite{Peng2016SymplecticSystems} for a structural health monitoring problem.  
The optimal frequency contour was determined by minimizing the error between the Weeks' method and the Newmark-beta method which is computationally demanding. 
For similar problems, the Laplace domain Greedy algorithm has also been used, where the RB space is enriched via the true error in the RB approximations \cite{bhouri2021two} or by solving a least-square minimization problem \cite{baydoun2020greedy} that can be expensive for large training sets. 
In parallel to our work, F. Henr\'iquez and J.S. Hesthaven \cite{henriquez2024fast} proposed a Laplace domain MOR approach to solve one time-instance of a finite element semi-discretized non-parametric wave equation for a fixed, constant wave speed. 
To obtain exponential convergence in \cite[Theorem 4.8]{henriquez2024fast}, the dimension of the ROM needs to be sufficiently large depending on the finite element resolution and wave speed. In their work, the ROM is obtained by projecting the time domain problem onto a reduced space constructed via the POD method relying on the frequency domain snapshots. The ROM solution can still be computationally expensive due to the use of time-stepping methods. 

For closely related seismic problems, the construction of ROMs has also been done via, for instance, eigenfunction expansions \cite{hawkins2023model}, Krylov subspace-based methods (e.g., \cite{druskin2018compressing,druskin2014extended}), a combination of multi-scale methods with balanced truncation \cite{druskin2017multiscale}, or by assuming certain symmetry properties of collocated sources and receivers (e.g., \cite{mamonov2015nonlinear,borcea2021reduced}). While the computation of eigenfunctions can be expensive,  balanced truncation-based methods may not always guarantee stable ROMs for wave problems (cf., \cite[sec. 3.2]{benner2021frequency}) along with the requirement to solve matrix differential equations, which can be computationally expensive.

The rest of the paper is organized as follows: In \cref{sec:Motivation} we provide detailed information on the considered applications and explain the main ideas of the approach and key contributions. 
In \cref{sec:Approximation and NLI} we show a well-posedness result associated with the Galerkin approximations and present Weeks' method. 
After introducing the Galerkin RB approximations, we derive the a posteriori error estimator in \cref{sec:RB approximation and error estimation}.
In \cref{sec:MOR for NLI,sec:parametric reduction} we propose methods for the construction of the RB space over the frequency parameter and in the multi-parameter setting,  respectively. Finally, we show numerical experiments for an  earth model in \cref{sec:Numerical_results} and conclude in \cref{sec:conclusions}.

\section{Motivation and key new contributions}\label{sec:Motivation}
\subsection{Seismological applications}\label{sec:seismo_application}
Seismic waveform modeling plays an important role in a broad spectrum of applications that include estimating the subsurface structure to better understand the Earth, studying source mechanisms of natural and induced seismicity (e.g. understanding earthquakes due to gas extraction), and developing an understanding of plausible ground shake due to nearby earthquakes in scenario testing for estimating seismic risk. The motivation for our work comes primarily from the computational challenges faced in the estimation of subsurface structures but also the development of earthquake monitoring tools \cite{panza2012seismic,wang2020regularized}
using the numerically modeled seismic wave propagation. For instance, subsurface structure estimation in a FWI approach or in seismic tomography typically uses an iterative gradient based optimization approach where the goal is to minimize a misfit function that penalizes differences between observed signals at seismic stations (seismograms) and simulated signals (cf., \cite{fichtner2009simulation,komatitsch2002spectrala,Rawlinson2014,Tromp2019SeismicScales,Tromp2005SeismicKernels}). The primary computational effort in this process occurs in each iteration, where a forward problem (seismic wave equation) is provided with a subsurface model for which a potentially large number of earthquakes or signals are simulated. A current limitation in FWI and seismic tomography approaches is the computational cost of forward modeling and this generally results in reducing the number of events that are simulated (removing data) or only performing a relatively small number of gradient descent iterations, possibly resulting in poorly converged results.

In computational seismology, a fast and yet accurate forward problem solver is critical for FWI for subsurface structure, estimation of earthquake source parameters, and, more generally, in many applications where large numbers of simulations with varying input parameters are required; examples are evaluating seismic hazards, generating large datasets for monitoring tool developments  or tomography optimization problems where the earth model is continuously being updated based on the difference between simulated and observed waveforms. Given the high significance of forward modeling in seismic applications, our objective is to reduce the computational costs by adopting an approach based on an inexact approximation of seismograms. The approach utilizes model order reduction to solve the forward problem with a desired accuracy level while accommodating parameter changes. In particular, we focus on instances where the material properties are updated for a fixed layer-structured subsurface model due to optimization iteration (see e.g., \cite{fichtner2009simulation,hawkins2023model,Tromp2019SeismicScales}) or changes in the frequency content.

\subsection{Model}\label{sec:MODEL}
We consider a bounded domain $\Omega\subset \mathds{R}^d$, $d=\{2,3\}$ with Lipschitz boundary $\partial \Omega$, which is partitioned into a Neumann (stress free) boundary $\Gamma_{N}$ and a Dirichlet (fixed) boundary $\Gamma_{D}$ such that $\overline{\partial \Omega} = \bar{\Gamma}_N \cup \bar{\Gamma}_D$, $\Gamma_N \cap \Gamma_D = \emptyset$, with $|\Gamma_{D}| > 0$. Seismic waveforms are modeled through elastic wave equations, where the material properties are defined using Lam\'e parameters $\lambda,\mu\in L^\infty(\Omega)$ that satisfy
\begin{equation}\label{eq:Lame_bounds}
0< \mu_0\leq\mu(x)\leq\mu_1<\infty,\quad 0<\lambda_0\leq \lambda(x)\leq \lambda_1<\infty \quad\text{for a.e. } x\in\Omega.
\end{equation}
For the sake of simplicity, we assume that the Lam\'e parameter values are constant in known layers. Otherwise, the MOR approach proposed in this paper needs to be combined with methods that address the issue of high-dimensional parameter spaces. 
Let $\Pspace := \{ (\lambda,\mu)\;:\; \lambda,\mu \text{ satisfy } \cref{eq:Lame_bounds} \}$ and $\paramu\in\Pspace$, then the elastic wave equation reads: Find $\mathcal{u}(\paramu):\Omega\times (0,T)\rightarrow \mathds{R}^d$ for $T<\infty$ such that
\begin{equation} \label{eq: Time Domain problem}
\begin{split}
 \rho \,\ddot{\mathcal{u}}(\paramu) - \nabla \cdot \sigma(\mathcal{u}(\paramu))&=  \mathcal{f},\quad \quad \quad \text{ in } \Omega \times (0,T),\\ 
 \sigma(\mathcal{u}(\paramu))\cdot \bm{n} &= 0, \;\;\;\quad\quad\text{ on } \Gamma_{N}\times (0,T),\\
 \quad \,\mathcal{u}(\paramu)&= 0,  \;\;\;\quad\quad\text{ on } \Gamma_{D}\times (0,T),\\
 \mathcal{u}(\paramu) &= 0,\;\qquad\quad \text{in } \Omega \times \{0\},\\
 \dot{\mathcal{u}}(\paramu) &= 0,\;\qquad\quad \text{in } \Omega \times \{0\}.
\end{split}
\end{equation}
Here $\rho\in L^\infty(\Omega)$ is the medium density, $\sigma(\mathcal{u}(\paramu))$ is the Cauchy stress tensor, and $\bm{n}$ is the outward unit normal. For a linear elastic material the Cauchy stress tensor satisfies $\sigma(\mathcal{u}(\paramu))= C(\paramu) : \varepsilon(\mathcal{u}(\paramu))$, where $C(\paramu)$ is the fourth-order stiffness tensor, $\varepsilon(\mathcal{u}(\paramu)) = \tfrac{1}{2} (\nabla \mathcal{u}(\paramu) + (\nabla \mathcal{u}(\paramu))^{T})$ is the infinitesimal strain tensor,
and the colon operator $:$ is defined as $(C(\paramu) : \varepsilon(\mathcal{u}(\paramu)))_{ij} = \sum_{k,l=1}^{d}C_{ijkl}(\paramu)\varepsilon_{kl}(\mathcal{u}(\paramu))$. We assume that the medium is piecewise homogeneous, isotropic and under plain strain for $d=2$ -- an assumption admissible for seismic waves with no deformations in the medium and elastic behavior independent of direction \cite{tromp2008spectral,Tromp2019SeismicScales}. Hence, the stiffness tensor is
\begin{equation}
C_{ijkl}(\paramu) = \lambda(x)\delta_{ij}\delta_{kl} + \mu(x)(\delta_{ik}\delta_{jl} + \delta_{il}\delta_{jk}), \quad 1\leq i,j,k,l \leq d, 
\end{equation}
where $\delta_{ij}$ denotes the Kronecker delta. 
Furthermore, $\mathcal{f}(x,t):=p(x;x_0)\mathcal{q}(t)$ is the external source that models the temporal seismic response on a point source location. Here, we use $\mathcal{q}(t)$ as the Ricker wavelet (second derivative of a Gaussian function), a common choice for seismic wave problems \cite{wang2015frequencies}, and $p(x;x_0)$ as the Gaussian point source for $x_0\in\Omega$, defined as
\begin{align}    \label{eq:Sources}
    p(x;x_0) := k_{a} e^{ -  {|x - x_0|^2}/(2\sigma_m^2)},\quad\mathcal{q}(t) := (1 - \frac{1}{2}{\alpha^2(t-t_0)^2})e^{-{\alpha^2(t-t_0)^2}/{4}}.   
\end{align}
Here, $\alpha>0$ is the central width of the Ricker wavelet controlling the gradient of the bell curve as shown in \cref{subfig:Ricker_TD}, $t_0$ is the peak arrival time, $\sigma_m$ is the source width, and $k_a$ is a scaling parameter. For the numerical modeling, we fix the peak-arrival of the seismic event as $t_0=k\pi/\alpha$, for an arbitrary $k\in \mathds{R}^+$ and $k\geq 3$ in view of preserving the symmetric shape of $\mathcal{q}(t)$ and keeping $\mathcal{q}(0)$ negligibly small; the latter ensures that the energy from the earthquake accumulates gradually rather than instantaneously.

Considering $\Nr$ many receivers, we finally define the seismograms as
\begin{equation}
    \OoI_i(t;\paramu) := \ell_i\big(\mathcal{u}(x,t;\paramu)\big), \quad  t\in [0,T],\quad 1\leq i\leq \Nr,\label{eq: output functional TD}
\end{equation}
where the linear functionals $\ell_i$ model the receiver characteristics that are located on the top of the subsurface.
\begin{remark}
    In practice, the Dirichlet boundary conditions in \cref{eq: Time Domain problem} are replaced by suitable absorbing boundary conditions or a perfectly matched layer \cite{komatitsch2003perfectly}. However, for the sake of simplicity, we have omitted absorbing boundary conditions. To reduce the influence of reflections on the seismograms, we assume a sufficiently large separation between the boundaries and our region of interest by extending the domain uniformly as done in the numerical results.  
\end{remark}
\subsection{Methodology and contributions of this paper}\label{sec:Methodlogy}
\begin{figure}[t]
\centering
\subfloat[]{\label{subfig:Ricker_TD}\includegraphics[width=.33\linewidth,trim={00cm 0cm 0.0cm 0cm},clip]{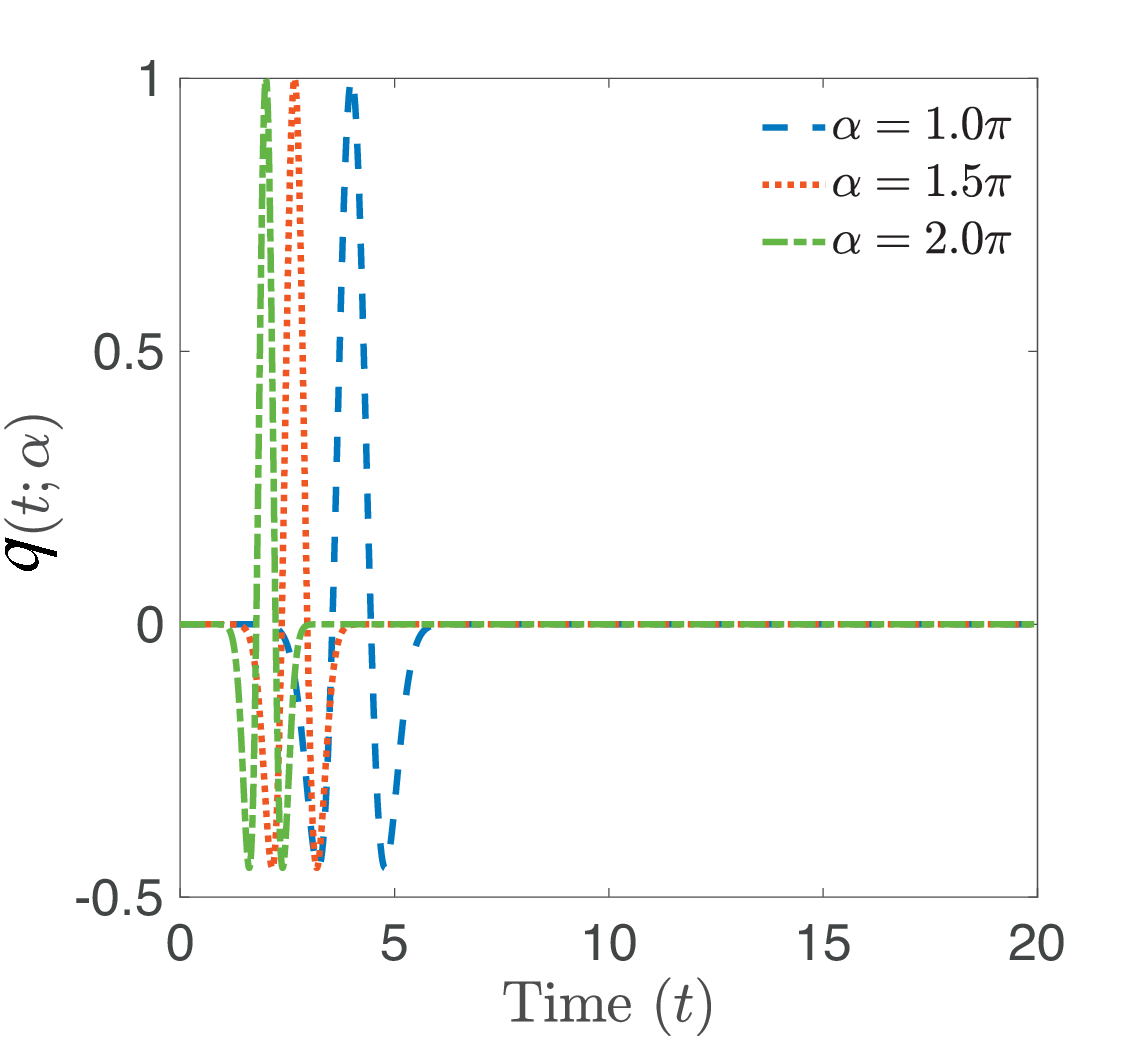}}
\subfloat[]{\label{subfig:Laplace_domain_Ricker_solution}\includegraphics[width=.33\linewidth,trim={00cm 0cm 0.0cm 0cm},clip]{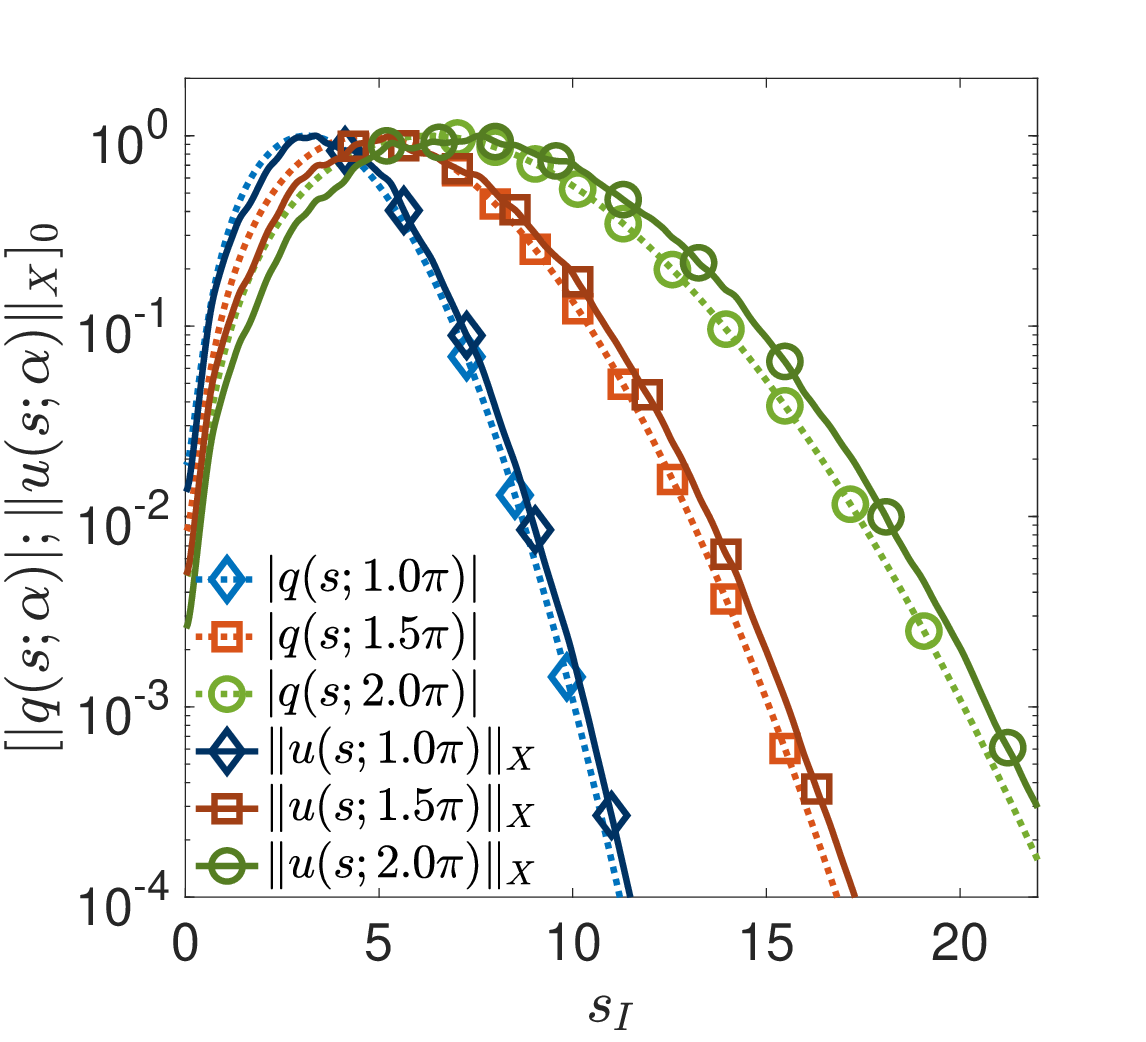}}
\subfloat[]{\label{subfig:Solution_behavior_smax}\includegraphics[width=.33\linewidth,trim={00cm 0cm 0.0cm 0cm},clip]{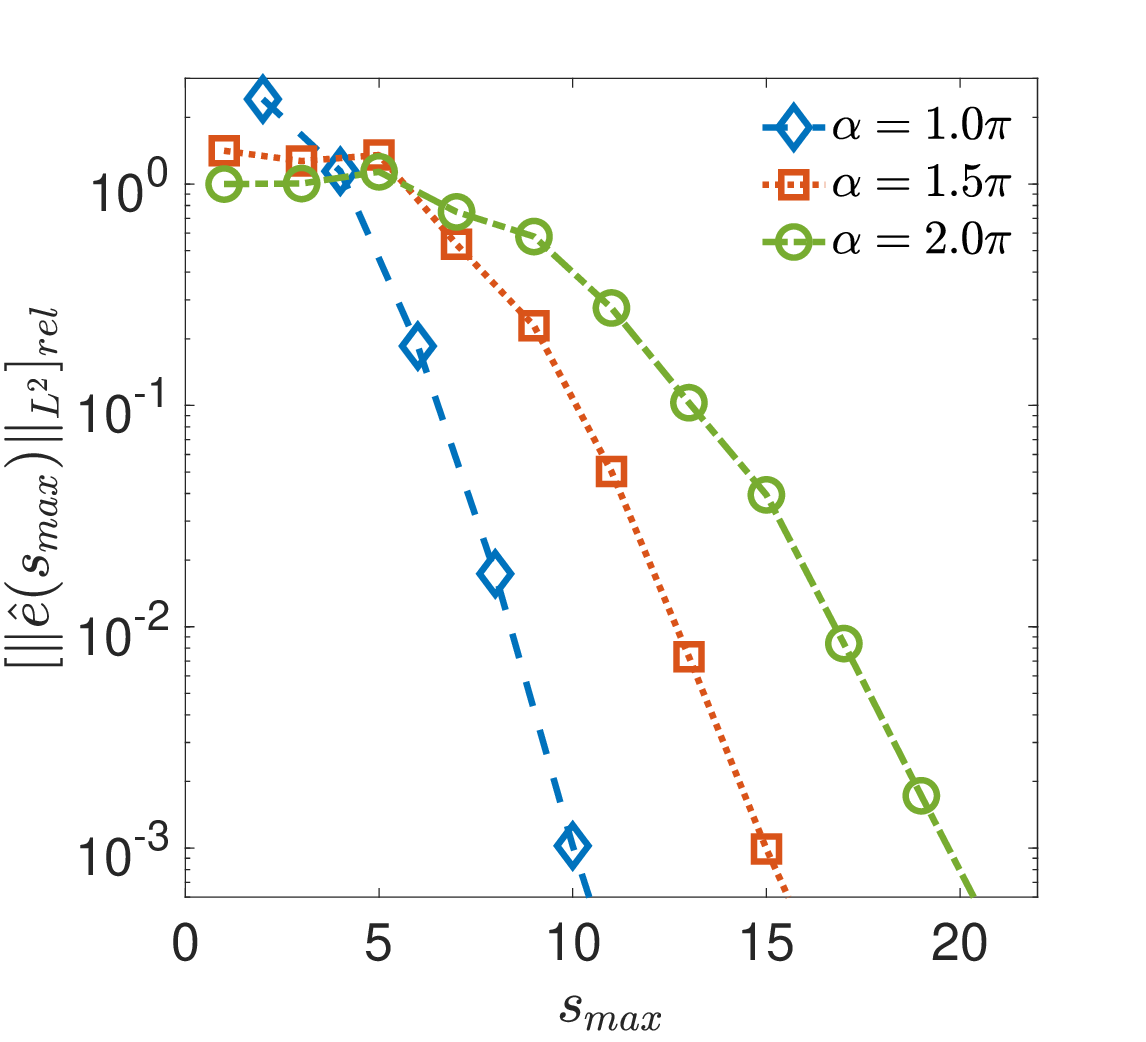}}
\caption{In (a), we plot the Ricker wavelet in the time domain for $\alpha\in\{1.0\pi,1.5\pi,2.0\pi\}$. In (b), we plot the Laplace transformed source and the frequency domain solutions for the considered problem in \cref{fig:model} for $s=0.26+\imagI\imags$ with $\imags\in[0.01,22]$. In (c), we plot the error $\hat{e}(s_{max}):=\OoI_{nbm} - \hat{\OoI}(s_{max})$ for increasing values of $s_{max}$. Here, $\OoI_{nbm}$ and $\hat{\OoI}(s_{max})$ denotes the approximation from the implicit Newmark-beta method approximation and and Weeks' method (cf., \cref{sec:NLI_Weeks}), respectively. }\label{fig:Ricker_wavelet_TD_FD_sol}
\end{figure}
 We use model order reduction via RB methods (see e.g., \cite{Rozza2008ReducedEquations}) to construct a rapid forward model solver for \cref{eq: Time Domain problem} without compromising the accuracy. The construction of the RB space for \cref{eq: Time Domain problem} is, however, challenging for the following reasons: 
 (i) Reducibility: The spatial solution trajectories experience (smooth) gradients across the interfaces, which can influence the Kolmogorov $n$-width\footnote{The Kolmogorov $n$-width is a measure of the best possible error attainable using elements of an $n$-dimensional linear space \cite{kolmogoroff1936uber}.} decay (cf., \cite{arbes2023kolmogorov}).
(ii) Stability: Classical RB methods for time-dependent problems, such as the POD-Greedy algorithm, do not generally guarantee stable RB approximations \cite{Peng2016SymplecticSystems}. 
(iii) Computational costs: Computation of the solution trajectories for a large parameter set can be expensive. 
Moreover, when using a time-stepping method also the solution of the ROM can be still computationally demanding.

We use the Laplace transformation to take advantage of the low-frequency restriction in practical seismological problems \cite{woodward1992wave,Pratt2012SeismicModel,du2000noise,sriyanto2023performance} and control the implicitly the frequency content by using the source function $\mathcal{q}(t)$ as in \cref{eq:Sources}. Let $s:=\reals + \imagI \imags \in \CC$, $\reals>0$, then, by applying the Laplace transform to $\mathcal{q}(t)$, we obtain:
\begin{equation}
    q(s) := e^{-\alpha^2t_0^2/4}(\alpha^3t_0 + 2\alpha s - 2\sqrt{\pi}s^2\text{erfc}(y)e^{y^2}),\quad y={(2s - \alpha^2t_0)}/{(2\alpha)}.
\end{equation}
Here, erfc$(\cdot)$ denotes the complementary error function, and $q(s)$ is analytic in the extended complex plane (cf., \cref{supplementary:Rickeranalytic}). We observe that for $\imags\gg \reals$, up to the numerical precision $\epsilon_m$ (given by the value of the Ricker wavelet at time zero), we have $|q(s)|\sim C_q|s|^2e^{- \imags ^2/\alpha^2}$ for some $C_q>0$. 
Here, we highlight that by varying $\alpha$, we can control the gradient of $\mathcal{q}(t)$ which influences the exponential decay of $|q(s)|$ (see \cref{subfig:Ricker_TD,subfig:Laplace_domain_Ricker_solution}) and, consequently, the frequency content used in the forward model.
A similar decay is observed in \cref{subfig:Laplace_domain_Ricker_solution} for the energy of the solutions $\ldu(\paramu)=\ldu(x,s;\paramu)$ of the elastic wave problem in the frequency domain:
\begin{alignat}{2}
    \nonumber  s^2 \rho \,\ldu(\paramu) - \nabla \cdot \sigma(\ldu(\paramu);\paramu)&=  f(x,s),  \quad &&\text{in } \Omega,\\
    \sigma(\ldu(\paramu);\paramu)\cdot \bm{n} &= 0,  &&\text{on } \Gamma_{N},\label{eq: Laplace Domain problem}\\
       \nonumber \ldu(\paramu) &= 0, \quad &&\text{on } \Gamma_{D},
\end{alignat}
where $f(x,s)=p(x;x_0)q(s)$, and the frequency domain seismograms read:
 \begin{equation}
    z_i(s;\paramu) := \ell_i\big(\ldu(\cdot,s;\paramu)\big) \quad \text{for }  i=1,...,\Nr. \label{eq: Output functional LD}
\end{equation} 
Thus, for some $\reals>0$ we can choose $ s_{max}>0$ depending on an acceptable error tolerance to define the frequency range of interest
\begin{equation}
     I_s:=\{ s=\reals + \imagI s_I \;:\; -s_{max}\leq s_I \leq  s_{max}\},\label{eq:definition of Is}
 \end{equation}
which acts as a band-pass filter to retain the important information (dominating energy) and neglect the rest; see \cref{remark:choice_smax}. 

Problem \cref{eq: Laplace Domain problem} is elliptic and, as we show in \cref{prop:infsup cont.}, uniformly inf-sup stable and well-posed for $s \in I_s$. As $q(s)$ is analytic we can thus show in \cref{prop:kolmogorov} that the Kolmogorov $n$-width decays exponentially. Together with the restriction to the low-frequency range $I_s$, this addresses the challenges of reducibility and stability. 

To approximate the time-dependent seismograms corresponding to \cref{eq: Output functional LD}, we perform numerical Laplace inversion via Weeks' method \cite{Weeks1966NumericalFunctions}. Similar to the decay of energy of the solutions, we observe exponential decay of the $L^2-$norm error in the time domain seismograms for increasing $s_{max}$ (see \cref{subfig:Solution_behavior_smax}). We use the structure of Weeks' method to derive an a posteriori error estimator for the error in the time domain seismograms (see \cref{prop:A_post_est}), which drives the Greedy or POD-Greedy algorithm in the Laplace domain. We highlight again that the uniform lower bound for the inf-sup constant in \cref{prop:infsup cont.} that is required for the a posteriori error estimator is explicitly computable and thus eliminates the need for expensive approximation methods such as the successive constraint method \cite{MR2367928}. 

As mentioned earlier, we emphasize that we do not need to solve the time-dependent problem at all and only need to solve problem \cref{eq: Laplace Domain problem} for few selected parameters $\paramu$ and $s$ for the Greedy algorithm and for few selected parameters $\paramu$ but in parallel for all parameters $s$ in a finite-dimensional training set that is dense in $I_s$ for the POD-Greedy algorithm. This makes our approach well-suited for modern computational architectures. Moreover, we observe in our numerical experiments that for a fixed parameter $\paramu$ the computational time for the construction of the RB space and reconstruction of time domain seismograms is smaller than their sequential computation using time-stepping methods. This is, for instance, useful for FWI since every grid point can be treated as a potential parameter in $\Pspace$, which might necessitate generating or updating RB spaces in every iteration of the optimization problem.

\section{Frequency and time domain approximations}\label{sec:Approximation and NLI}

\subsection{Weak formulation} \label{sec:Weak_form_LT}
 
 {\color{black} Let $\paramu\in \Pspace$ and $s:=\reals+\imagI \imags\in \CC$ with $\reals>0$.} We introduce the Sobolev space $\U:=\{\ldw\in H^1(\Omega;\mathds{C}^d) : \ldw|_{\Gamma_D} = 0\}$, which we equip with the inner product $(\ldu,\ldw)_{\U;\paramu} := (\rho \ldu,{\ldw})_{L^2} + b(\ldu,\ldw;\paramu)$, and denote by $\|\ldu\|_{\U;\paramu}:=(\ldu,\ldu)^{1/2}_{\U;\paramu}$ the induced norm. Here, $b(\ldu,\ldw;\paramu): \U \times \U \rightarrow \mathds{C}$ is defined as
\begin{equation}\label{eq:sesquilinearform_elasticity}
    b(\ldu,\ldw;\paramu) := \int_\Omega 2\mu(x) \varepsilon(u):\varepsilon(\bar{w})+\lambda(x) (\nabla\cdot u )(\nabla \cdot \bar{w}) dx.
\end{equation}
{\color{black} For a reference parameter $\paramu^*=(\lambda^*,\mu^*)\in\Pspace$ we set $\|\ldu\|_{\U}:=\|\ldu\|_{\U;\paramu^*}$. In view of \cref{eq:Lame_bounds}, we have for all $\paramu\in\Pspace$ the norm equivalence $c_1\|\ldu\|_{\U}\leq \|\ldu\|_{\U;\paramu} \leq c_2 \|\ldu\|_{\U}$ with constants $c_1 :=\min\{\|\lambda_0/\lambda^*\|_\infty^{-1},\|\mu_0/\mu^*\|_\infty^{-1}\}$ and $c_2:=\max \{\|\lambda_1/\lambda^*\|_\infty, \|\mu_1/\mu^*\|_\infty \}$.}

The weak formulation of \Cref{eq: Laplace Domain problem} then reads: Find $\ldu(s;\paramu) \in \U$ such that 
\begin{align}
    B(\ldu(s;\paramu),\ldw\,;s;\paramu) = q(s)(p,\ldw)_{L^2}, \quad \forall \; \ldw\in \U, \label{eq: Weak formulation Laplace domain}
\end{align}
where we define the parameterized sesquilinear form $ B(\cdot,\cdot;s;\paramu): \U \times \U \rightarrow \mathds{C}$ as 
\begin{align}\label{eq:sesquilinearform_LD}
    B(\ldu,\ldw\,;s;\paramu) := s^2(\rho\ldu,\ldw)_{L^2} + b(\ldu,\ldw;\paramu).
\end{align}
The next result ensures well-posedness of the weak formulation \cref{eq: Weak formulation Laplace domain}.
\begin{proposition}[Well-posedness]\label{prop:infsup cont.}
   {\color{black} For $s:=\reals + \imagI \imags \in I_s$ and $\paramu\in\Pspace$, we have}
\vspace{0.1cm}
\noindent (i) Inf-sup and transposed inf-sup conditions: 
\begin{align}
    \inf_{\ldu \in \U } \sup_{\ldw \in \U } \frac{|B(\ldu,\ldw;s;\paramu)|}{\|\ldu\|_{\U;\paramu} \| \ldw\|_{\U;\paramu} }&\geq \beta(s;\paramu),\quad 
       \inf_{\ldw \in \U } \sup_{\ldu \in \U } \frac{|B(\ldu,\ldw;s;\paramu)|}{\|\ldu\|_{\U;\paramu} \| \ldw\|_{\U;\paramu} } \geq \beta(s;\paramu),\label{eq:transposed infsup cont.}
\end{align}
    where 
    \begin{align}
    \beta(s;\paramu)=\begin{cases} |s^2(1-\tau_*(s)) + \tau_*(s)|, & \text{if } \tau_*(s) \in [\tau_{0}, 1], \\
      |s^2(1-\tau_0) + \tau_0| , & \text{if } \tau_{0}> \tau_*(s),\\
      1, & \text{if } \tau_*(s)> 1. 
        \end{cases}
        \label{eq:betaLB cases continous}
    \end{align} 
       {\color{black} Here, ${\tau}_*(s) := 1- {c_r}/{(c_r^2+c_i^2)}$ for $c_r := 1 - (\reals^2-\imags^2)$ and $c_i := 2\reals \imags$. Moreover, $\tau_0:=\CK/(1+\CK)>0$, where $\CK>0$ satisfies $|b(\ldw,\ldw;\paramu)|\geq \CK \|\ldw\|^2_{H^1_{\rho}},\forall\ldw\in\U$ in the weighted norm $\|\ldw\|^2_{H^1_{\rho}}:=\|\rho^{1/2}\ldw\|^2_{L^2} + \|\nabla\ldw\|^2_{L^2}$.
       }
         \vspace{0.1cm}
         
\noindent (ii)  Uniform bounds: Let $\underline{\beta}:=\frac{c_1}{c_2} \beta_*(\reals; s_{max})$, then 
    \begin{align}\label{eq:infsup cont. m0}
    \inf_{\ldu \in \U } \sup_{\ldw \in \U } \frac{|B(\ldu,\ldw;s;\paramu)|}{\|\ldu\|_{\U} \| \ldw\|_{\U} }&\geq \underline{\beta},\quad 
       \inf_{\ldw \in \U } \sup_{\ldu \in \U } \frac{|B(\ldu,\ldw;s;\paramu)|}{\|\ldu\|_{\U} \| \ldw\|_{\U} } \geq \underline{\beta},\quad \forall\;\paramu \in \Pspace,
    \end{align}
    where $\beta_*(\reals; s_{max}):=\min\{1, \reals^2,2\reals/\sqrt{4\reals^2+s^2_{max}}\}$ and $\underline{\beta}>0$.
    
\vspace{0.1cm}

\noindent (iii) Continuity condition: Let $C_B:=\max\{1,\reals^2+ s_{max}^2 \}c_2^2$, then for all $\paramu\in \Pspace$,
\begin{equation}\label{eq:continuity_B}
    |B(\ldu,\ldw;s;\paramu)| \leq C_B\|\ldu\|_{\U}\|\ldw\|_{\U} \quad \forall \ldu,\ldw \in \U.
\end{equation}
\end{proposition}

\begin{proof}
The proof is based on standard arguments, where we use the Riesz representation theorem to define a supremizer operator $T:\U\rightarrow\U$ by $(T(s;\paramu)\ldu,\ldw)_{\U;\paramu}=B(\ldu,\ldw;s;\paramu)$ for all $\ldu,\ldw\in\U$. We expand $B(\cdot,\cdot;s;\paramu)$ using eigenfunctions $v(\paramu)$ for the generalized eigenvalue problem $b(v(\paramu),\ldw;\paramu)=\tau(\paramu) (v(\paramu),\ldw)_{\U;\paramu}$ for all $\ldw\in\U$ and eigenvalues $\tau(\paramu)$.  
Next, by using the definition of $(\cdot,\cdot)_{\U;\paramu}$ we find that $B(v(\paramu),\ldw;\paramu)=\chi(s;\paramu)(v(\paramu),\ldw)_{\U;\paramu}$, where $\chi(s;\paramu)=s^2(1-\tau(\paramu)) + \tau(\paramu)$. Using the definition of the supremizer operator we note that $\chi(s;\paramu)$ are the eigenvalues of $T$.
Next, we treat $|\chi(s;\paramu)|^2$ as a quadratic polynomial in $\tau(\paramu)$ to find the minimizer $\tau_*(s)$ defined after \cref{eq:betaLB cases continous}. 
To find $\tau_0$ independent of $\paramu$, we use the definition of $(\cdot,\cdot)_{\U;\paramu}$ in $b(v(\paramu),\ldw;\paramu)=\tau(\paramu) (v(\paramu),\ldw)_{\U;\paramu}$, and after rearranging we get $b(v(\paramu),\ldw;\paramu)=\frac{\tau(\paramu)}{1-\tau(\paramu)}(\rho v(\paramu),\ldw)_{L^2}$. Next, following Korn's inequality \cite{horgan1995korn}, we use $|b(\ldw,\ldw;\paramu)|\geq \CK \|\ldw\|^2_{H^1_{\rho}}$ in the weighted norm and $(\rho v(\paramu),v(\paramu))_{L^2}\leq \| v(\paramu)\|_{H^1_{\rho}}^2$ to find $\tau_0=\CK/(1+\CK)>0$ such that $\tau_0\leq \tau(\paramu)\leq 1$ for all $\paramu\in \Pspace$.
Finally, standard  calculus yields $|\chi(s;\paramu)|\geq \beta(s;\paramu)$ for all $s\in I_s$ and $\paramu\in\Pspace$. The transposed inf-sup condition, continuity constant and uniform bounds are straightforward. For the readers convenience, we have added a detailed proof in \cref{supplementary:Wellposedness}. 
\end{proof}

\begin{remark}
 We obtain the same lower-bound $\underline{\beta}$ as in \cref{eq:infsup cont. m0} for the discrete counterpart of the inf-sup constant, which will be used in the a posteriori error estimator derived in \cref{sec:A posteriori error estimate}. Numerical results for the considered test case show that the lower bound is very close to the inf-sup constant of the discrete problem; see \cref{supplementary:infsup_plots}. Moreover, the solutions at higher frequencies might become unstable due to the $\mathcal{O}(|{s}|^2)$ growth for $C_B$ and $\mathcal{O}(|{s}|^{-1})$ decay in the inf-sup constants.
\end{remark}
\subsection{Finite Element approximation}\label{sec:truth}
 Let $\Uh \subset \U$ be a conforming finite element space of dimension $N_h$. Then the Galerkin approximation of \cref{eq: Weak formulation Laplace domain} reads: For any $\paramu \in \Pspace$ and $s\in \CC$ with $\reals>0$, find $\ldu_h(s;\paramu)\in \Uh$ such that 
 \begin{equation}
B(\ldu_h(s;\paramu),\ldw_h;s;\paramu) = q(s)(p,\ldw_h)_{L^2}, \quad \forall \; \ldw_h \in \Uh. \label{eq: Weak formulation discrete}
\end{equation}
The well-posedness of problem \cref{eq: Weak formulation discrete} can be proved similarly to \cref{prop:infsup cont.} with the same stability constants.  
Next, we define the discrete counterpart of the frequency domain seismogram 
\begin{equation}
    z_{h,i}(s;\paramu) := \ell_i\big(\ldu_h(s;\paramu)\big) \quad \text{for } i=1,...,\Nr\;,\paramu\in\Pspace, \text{and } s\in I_s.\label{eq: output functional FD discrete}
\end{equation}

\begin{remark}[Low-frequency restriction using $ s_{max}$]\label{remark:choice_smax}
The standard stability estimate yields $|z_{h,i}(s;\paramu)|\leq \kappa(s)$ for all $s\in I_s$, $\paramu\in\Pspace$ and for $i=1,\ldots,N_r$, where ${\underline{\beta}}$ is  defined as in \cref{eq:infsup cont. m0}, $\kappa(s):=\|\ell_i\|_{\U_h^*}\|P\|_{\U_h^*}{|q(s)|}/{\underline{\beta}}$, $\|\ell_i\|_{\U_h^*}:=  \sup_{w_h\in\Uh} |\ell_i(w_h)|/\|w_h\|_{\U}$, and $\|P\|_{\U_h^*}:=  \sup_{w_h\in\Uh} |(p,w_h)_{L^2}|/\|w_h\|_{\U}$. 
For $ s_{max} \rightarrow \infty$, we 
observe that ${|q(s)|}/{\underline{\beta}}$ is of the same (negligibly small) order as $\epsilon_{0}:=\mathcal{q}(0)\cdot c_2/{(4\reals c_1)}$.\footnote{Since, $\lim_{ s_{max}\rightarrow\infty} {|q(s_R+\imagI s_{max})|}/{\underline{\beta}(\reals; s_{max})}=\mathcal{q}(0)\cdot c_2/{(4\reals c_1)}$.} 
Therefore, for a given desired accuracy $\epsilon_*\geq C\epsilon_{0} >0$ where $C>0$ depends on $\ell_i$ and $p$, we define $ s_{max}$ as the infimum of the set $\{\omega'>0: \kappa(\reals\pm\imagI \omega)\leq \epsilon_*\text{ for all } \omega>\omega'\}$,
which we also use for the time domain approximations in \cref{sec:NLI_Weeks}.
\end{remark} 

\subsection{Numerical Laplace inversion via Weeks' method \cite{Weeks1966NumericalFunctions}}\label{sec:NLI_Weeks}
Let $z(s)$ be an arbitrary Laplace-transformed function, then its time domain approximation via Weeks' method is defined as
\begin{align}
     \hat{\OoI}(t) : = e^{\wr t}\sum_{p=0}^{N_z-1} {a}_{p}(z;\wr,\wi) e^{-\wi t}L_p(2\wi \,t),\quad\label{eq:Weeks Laguerre sum v2} %
\end{align}
where $L_p(\cdot)$ denotes the Laguerre polynomial of degree $p$. Here, $\wr,\wi\in\mathds{R}^+$ are free parameters and the expansion coefficients are defined as
\begin{equation}\label{eq:a_p original}
    {a}_p(z;\wr,\wi) := \frac{\wi}{N_z} \sum_{j=-N_z}^{N_z-1}\frac{e^{-\imagI p\theta_{j+1/2}}}{1 - e^{\imagI\theta_{j+1/2}}} z(\hat{s}_j), \text{ for } \, \hat{s}_j= \wr + \imagI \wi \cot \frac{\theta_{j+1/2}}{2},
\end{equation}
where $\theta_j=j\pi/N_z$ for $j=-N_z,\ldots,N_z-1$ (cf., \cite{Brio2005ApplicationExponential,Weeks1966NumericalFunctions,Weideman1999AlgorithmsTransform}). 
In view of \cref{remark:choice_smax} and the negligibly small energy of the solutions after $s_{max}$ (see \cref{fig:Ricker_wavelet_TD_FD_sol}), we approximate $a_p(z;\wr;\wi)$ by considering only those $j$ for which $\Im(\hat{s}_j)<s_{max}$.  
We define
\begin{equation}\label{eq:shat_new_Week}
    I_{w} = \Big \{\wr + \imagI \wi \cot \frac{\theta}{2}\,:\,\theta\in[-\pi,\pi]\setminus \{0\}\text{ and } |\theta| \geq 2\arctan \frac{\wi}{ s_{max}}\,\Big\},
\end{equation}
and replace $a_p(z;\wr;\wi)$ in \cref{eq:Weeks Laguerre sum v2} with $ {a}^{\epsilon}_p(z;\wr,\wi)$ defined as
\begin{equation}
    {a}^{\epsilon}_p(z;\wr,\wi) := \frac{\wi}{N_z} \sum_{\hat{s}_j\in I_w} \frac{e^{-\imagI p\theta_{j+1/2}}}{1 - e^{\imagI\theta_{j+1/2}}} z(\hat{s}_j), \text{ for } \hat{s}_j= \wr + \imagI \wi \cot \frac{\theta_{j+1/2}}{2},\label{eq:a_p truncated}
\end{equation}
for $\theta_j=j\pi/N_z$, $j=-N_z,\ldots,N_z-1$ and $p=0,\ldots,N_z-1$. Here, ${a}^{\epsilon}_p(z;\wr,\wi)$ is the filtered approximation of $a_p(z;\wr;\wi)$.

We perform the series summation in \cref{eq:Weeks Laguerre sum v2} using the Clenshaw algorithm \cite{Clenshaw1955ASeries}. In addition, the wavefield at any $t\in[0,T]$ is reconstructed as
\begin{equation}\label{eq:Weeks_u_HFM}
    \utdc_h(t;\paramu) : = e^{\wr t}\sum_{p=0}^{N_z-1} {a}^{\epsilon}_{p}(u_h;\wr,\wi) e^{-\wi t}L_p(2\wi \,t).
\end{equation}

\begin{remark} 
 We choose Weeks' method due to its explicit expression for the time domain approximations, as also considered in \cite{Bigoni2020SimulationbasedMonitoring} for structural-health monitoring. 
Because of the distribution of the singularities over the whole imaginary axis, it is challenging to use methods based on contour deformation (cf., \cite{guglielmi2022contour,talbot1979accurate,weideman2007parabolic}). 
\end{remark}
The success of Weeks' method depends on the selection of the parameters $(\wr,\wi)$ which helps to balance the exponential growth in time and the behavior of $L_p$ for large $p$ 
\cite{Weeks1966NumericalFunctions,Weideman1999AlgorithmsTransform,Brio2005ApplicationExponential}. Next, we present an algorithm to select $(\wr,\wi)$. 
\subsubsection{Optimal parameter selection} \label{sec:parameter selection Week}
 Let $I_r$ and $I_i$ be heuristically selected intervals in which $(\wr,\wi)$ are likely located, and let $\epsilon_m$ denote the machine precision. Then, we find optimal $(\wr,\wi)$ by solving the following nested optimization problem (\cite[Algorithm 2]{Weideman1999AlgorithmsTransform}): Find $(\wr,\wi)\in I_r\times I_i$, such that 
\begin{equation}
\begin{split}
     &\wr = \underset{s_r \in I_r}{\text{arg}\min}\; e^{s_r  T} \Big(\sum_{p=\Nz}^{2\Nz-1} |{a}_p^{\epsilon}(z_h;s_r ;\wi(s_r ))| + \epsilon_m \sum_{p=0}^{\Nz - 1}|{a}_p^{\epsilon}(z_h;s_r ;\wi(s_r ))|   \Big), \\   & \text{where }  \wi(s_r ) =\underset{s_i \in I_i }{\text{arg}\min}\; \sum_{p=N_z}^{2N_z-1}|{a}_p^{\epsilon}(z_h;s_r ;s_i)|. \label{eq:min problem Weeks method sr}
\end{split}
\end{equation}
 As one novel contribution of this paper, we propose replacing the full order seismogram $z_h(\hat{s};\paramu)$ in \cref{eq:min problem Weeks method sr} by a reduced order approximation $z_k(\hat{s};\paramu)$, which can be evaluated much faster, cf., \cref{prop:kolmogorov}; for details on how to obtain $z_k(\hat{s};\paramu)$ we refer to \cref{sec:RB approximation and error estimation,sec:MOR for NLI,sec:parametric reduction}. 
\begin{remark}[Computational realization]\label{rem:opti_para_week}
    We use Brent's algorithm \cite{brent2013algorithms} to solve the nested optimization problem as suggested in \cite{Weideman1999AlgorithmsTransform}, with the choice $I_r= [0.12,0.62]$ and $I_i = [0.01,s_{max}]$ after following the considerations in \cite[Sec. 3.5]{Brio2005ApplicationExponential}. Brent's algorithm can potentially exhibit slower convergence if the minimum is on the boundary of the interval. However, we observed a rapid convergence for our problem. Moreover, in the numerical results we observe that the optimal parameters $(\wr,\wi)$ depend only weakly on $\paramu\in\Pspace$. Therefore, we compute the optimal parameters only once for the reference parameter $\paramu^*\in \Pspace$ and re-use them for all other $\paramu\in\Pspace$.
\end{remark}
\section{Reduced basis method} \label{sec:RB approximation and error estimation}
In this section, we present a Galerkin RB formulation of the parametric elastic wave equation for a computationally efficient approximation of seismograms. We highlight the practical significance of the low-frequency restriction, which is crucial for ensuring stability and achieving (provably) exponential convergence for the RB approximation errors. Furthermore, we derive a novel and computationally efficient a posteriori error estimator for the time domain RB approximation error in the seismograms which we use in \cref{sec:parametric reduction} to construct the RB space without computing any time-dependent counterparts.
\subsection{Galerkin RB approximations}\label{sec:RB approx FD}
Let $\Urb\subset \Uh$ be a subspace of dimension $N_k\ll \Nh={\rm dim}(\Uh)$. The Galerkin RB approximation in the frequency domain is defined as follows: For any $\paramu\in\Pspace$ and $s\in I_s$, find $\ldu_k(s;\paramu)\in\Urb$ such that 
\begin{equation}
    B(\ldu_k(s;\paramu),\ldw_k;s;\paramu) = q(s)(p,\ldw_k)_{L^2}\quad \forall \, \ldw_k\in\Urb. \label{eq:weak form RB}
\end{equation}
We obtain well-posedness of \cref{eq:weak form RB} analogously to \eqref{eq: Weak formulation discrete} for all $s\in I_s$ and $\paramu\in\Pspace$.

Next, for any receiver location index $i=1,...,\Nr$, we define the RB approximation of the frequency domain and the time domain seismograms as
\begin{align}
     z_{k,i}(s;\paramu) &:= \ell_i\big(\ldu_k(s;\paramu)\big),\quad s\in I_s,\label{eq: output functional FD discrete RB}\\ 
    {\hat{\mathcal{z}}}_{k,i}(t;\paramu) &:= e^{\wr t}\sum_{p=0}^{N_z-1} {a}_p^{\epsilon}(z_{k,i}(\paramu);\wr;\wi) e^{-\wi t}L_p(2\wi \,t),
\end{align}
where ${a}_p^{\epsilon}(\mathcal{z}_{k,i}(\paramu);\wr;\wi)$ is defined as in \cref{eq:a_p truncated} using RB approximations $z_{k,i}(\hat{s};\paramu)$ for $\hat{s}\in I_w$. In addition, the time domain wavefield can be reconstructed as
\begin{align}\label{eq:Weeks_u_RBM}
    {\hat{\mathcal{u}}}_{k}(t;\paramu) &:= e^{\wr t}\sum_{p=0}^{N_z-1} {a}_p^{\epsilon}(u_{k}(\paramu);\wr;\wi) e^{-\wi t}L_p(2\wi \,t).
\end{align} 
The next proposition establishes the exponential decay of the Kolmogorov $n-$width for approximating $s\mapsto u(s;\paramu)$, $s\in I_s$, for a fixed $\paramu$, similar to the procedure in \cite[Theorem 3.1]{ohlberger2015reduced} for coercive elliptic problems.

\begin{proposition}[Exponential decay of Kolmogorov $n$-width]\label{prop:kolmogorov}
    Let $\paramu\in\Pspace$ be fixed
    and define $\mathcal{M} := \{u(s;\paramu)\, :\,u(s;\paramu) \;\text{solves } \cref{eq: Weak formulation Laplace domain} \text{ for } s \in I_s \}$. Then, the Kolmogorov $n-$width $d_{n}(\mathcal{M}):= \inf_{\underset{\dim(\U_n)=n}{\U_n\subset \U}}\sup_{s\in I_s} \inf_{v_n\in \U_n}\|u(s;\paramu) - v_n\|_{\U}$ satisfies
\begin{equation}
    d_{n}(\mathcal{M}) \leq \frac{\hat{c}_1 s_{max}}{2s_R} e^{-\frac{\ln(2)  \reals}{ s_{max}} n}, \qquad n\in \mathbb{N},\label{eq:n_width_kolmo}
\end{equation}
for some fixed constant $\hat{c}_1>0$.
\end{proposition}
 \begin{proof}
We first observe that, since $q(s)$ is an analytic function of $s$ (see \cref{supplementary:Rickeranalytic}) and \cref{eq: Weak formulation Laplace domain} is well-posed for any $s$ in the right half-plane, $u(s)$ is an analytic function in an open neighborhood of $I_s$. Hence, $u(s)$ can be expanded as a power series locally around any $s\in I_s$ with convergence radius larger than $s_R/2$. 
Covering $I_s$ with $2s_{max}/s_R$ many open balls of radius $s_R/2$, and following the arguments given in the proof of \cite[Thm 2.1]{ohlberger2015reduced}, we arrive at the result.
\end{proof}

\subsection{A posteriori error estimator}\label{sec:A posteriori error estimate}
In this part, we present an a posteriori error estimator which is utilized in the construction of $\Urb$ in \cref{sec:MOR for NLI,sec:parametric reduction}.
First, we recall the well-known residual-based error estimator. Let $s:=\reals + \imagI \imags \in I_s$ and $\paramu\in\Pspace$, then for the residual
\begin{equation}
    r_h(\ldw_h;s;\paramu) := q(s)(p,\ldw_h)_{L^2} - B(\ldu_k(s;\paramu),\ldw_h;s;\paramu), \quad \forall \ldw_h\in \Uh,
\end{equation}
we exploit the error-residual relationship, similar as e.g., in \cite{Rozza2008ReducedEquations}, to obtain
\begin{align}\label{eq:standard_res_estimator}
    \|\ldu_h(s;\paramu) - \ldu_k(s;\paramu) \|_{\U} \leq {\Delta}_k(s;\paramu):= \frac{1}{\underline{\beta}} \|r_h(s;\paramu)\|_{\U_h^*},
\end{align}
where $\|\cdot\|_{\U_h^*}$ is the dual norm as in \cref{remark:choice_smax} and $\underline{\beta}>0$ is the lower bound of the (discrete) inf-sup constant from \cref{prop:infsup cont.}. In addition, the effectivity $\eta(s;\paramu)$ of the a posteriori error estimator satisfies
\begin{equation}\label{eq:effectivity}
    1 \leq \eta(s;\paramu):=\frac{{\Delta}_k({s};\paramu)}{\|u_h({s};\paramu) - u_k({s};\paramu)\|_{\U}} \leq \frac{C_B}{\underline{\beta}},\quad \forall\,{s}\in I_s,\paramu\in\Pspace,
\end{equation}
where $C_B$ is as in \cref{eq:continuity_B}. 
We stress that $C_b/\underline{\beta}$ behaves like $\mathcal{O}(|s_{max}|^3)$. 

Next, by using \cref{eq:Weeks_u_HFM} we get a bound for the time domain RB approximation error in the wavefield:
\begin{equation}\label{eq:RB_approx_error_u}
    \|\hat{\utdc}_h(\paramu) - \hat{\utdc}_k(\paramu)\|_{L^2(0,T;\U)}\leq C_{\Week} \sum_{\hat{s}_j\in I_w} C_j\|\ldu_h(\hat{s}_j;\paramu) - \ldu_k(\hat{s}_j;\paramu)\|_{\U}.
\end{equation}
Here, $C_{\Week}:= \wi/N_z({\sum_{p,q=0}^{N_z-1}|\int_0^T e^{2(\wr-\wi)t}L_p(2\wi t)L_q(2\wi t)dt |})^{1/2}$ and $C_j=1/|1 - e^{\imagI \theta_{j+1/2}}|$ for $j=-N_z,\ldots,N_z-1$. Moreover, we obtain the following a posteriori error estimator for the RB approximation error in the seismograms.
\begin{proposition}[A posteriori error estimator]\label{prop:A_post_est}
Let $\paramu\in \Pspace$ and $\hat{s}_j\in I_w$ for $j=-N_z,\ldots,N_z-1$. Then, the frequency domain RB approximation error satisfies
    \begin{equation} 
    |z_{h,i}(\hat{s}_j;\paramu) - z_{k,i}(\hat{s}_j;\paramu) |\leq \Delta_{k,f}(\hat{s}_j;\paramu):=\|\ell_i \|_{\U_h^*} {\Delta}_k(\hat{s}_j;\paramu), \;i=1,\ldots,N_r.\label{eq:error estimate FD output} 
   \end{equation}
 Moreover, the RB approximation error in the time domain seismograms satisfies
   \begin{equation}
   \|\hat{\OoI}_{h,i}(\paramu) - \hat{\OoI}_{k,i}(\paramu)\|_{L^2(0,T)} \leq  \Delta_{k,t}(\paramu) :=C_\Week \sum_{\hat{s}_j\in I_w} C_j\Delta_{k,f}(\hat{s}_j;\paramu).\label{eq:OoI estimate TD}
    \end{equation}  
\end{proposition}

\begin{proof}
    We bound the time domain RB approximation errors in the seismograms using the dual norm of the bounded linear output functional and use \cref{eq:RB_approx_error_u}. The rest follows directly using \cref{eq:standard_res_estimator}. 
\end{proof}
Compared to the a posteriori error estimators for wave problems that use time-dependent evaluations of a semi-discretized problem \cite{glas2020reduced,amsallem2014error}, we avoid computing any time-dependent counterparts and numerically compute $C_{\Week}$ depending on the final time $T$ only once.  In addition, the derived lower-bound $\underline{\beta}$ for the discrete inf-sup constant is very close to the actual values shown in numerical experiments for the considered test case in \cref{supplementary:infsup_plots} and eliminates the need for expensive approximation methods such as the successive constraint method \cite{MR2367928}. After a few additional solutions of \eqref{eq: Weak formulation discrete}, one can calculate the a posteriori error estimators in \cref{prop:A_post_est} in a computational complexity independent of $N_h$; see e.g., \cite{Rozza2008ReducedEquations}.

\section{Reduction in the $s-$parameter with \textit{m} fixed} \label{sec:MOR for NLI}
In this section, we construct $\Urb \subset \Uh$ to accurately compute the time-dependent seismograms, for a fixed $\paramu \in \Pspace$, in a fraction of the wall-clock time of the FOM relying on an implicit Newmark-beta method. 
This is achieved by employing the well-known POD method \cite{MR1204279,MR1868765,MR0910462} and the Greedy algorithm \cite{veroy2003posteriori} in an elliptic setting. We take advantage of the exponential Kolmogorov $n-$width decay under low-frequency restrictions (\cref{prop:kolmogorov}) to ensure that $\Urb$ is of very small dimension. Moreover, the construction of $\Urb$ can be achieved in a notably small elapsed time, especially when many processors are used for the computation of the snapshots for the POD method. This makes our approach well-suited for goal-oriented model reduction presented in \cref{sec:parametric reduction} and for seismic applications such as FWI, where the RB space might need to be updated in every iteration.

\subsection{Proper orthogonal decomposition} \label{sec:CPOD}
 The objective of the POD method \cite{MR1204279,MR1868765,MR0910462}, also known as Karhunen-Lo{\`e}ve expansion 
 or Principal Component Analysis, is to construct an ``optimal'' set of orthogonal basis functions $\{\xi_k\}_{k=1}^{N_k}$: 
 The POD basis functions $\xi_k:=\xi_k^R+\imagI \xi_k^I\in\Urb$, $1\leq k\leq N_k$ are obtained by solving the following optimization problem
\begin{align}
   \underset{\U_{k} }{\min}\frac{1}{N_s}\sum_{s\in \tilde{I}_s} \Big\|u_h(s) - \sum_{k=1}^{\Nk}\Big(u_h(s),\xi_k\Big)_{\U}\xi_k\Big\|^2_{\U},\label{eq:CPOD min problem}\; \text{s.t. } (\xi_i,\xi_j)_{\U}=\delta_{ij},\; 1\leq i,j\leq \Nk,
\end{align}
where $\U_{k} :=\text{span}\{\xi_1,\ldots,\xi_k\}\subset {\text{span}}\{u_h(s)\,:\,s\in \tilde{I}_s\}$, and $\tilde{I}_s\subset I_s$ is a finite-dimensional training set of cardinality $N_s$. 

We determine the POD basis by applying the method of snapshots \cite{MR0910462}. To that end, we introduce the Gramian matrix $G\in\mathds{C}^{N_s\times N_s}$  
\begin{equation}
G_{ij}:=\Big(u_h(s_j),u_h(s_i) \Big)_{\U}\quad  \text{for } 1\leq i,j\leq N_s, 
\end{equation}
and solve the eigenvalue problem: Find $(\boldsymbol{w}_k,\gamma_k)\in(\mathds{C}^{N_h},\mathds{R}^+)$ such that 
\begin{equation}
    G\boldsymbol{w}_k = \gamma_k \boldsymbol{w}_k\quad  \text{for } 1\leq k \leq N_k. \label{eq:CPOD correlation matrix EVP}
\end{equation}
Here, we choose the dimension of the RB space $N_k$ as the smallest integer satisfying ${\sum_{i=1}^{\Nk}\gamma_i^2}/{\sum_{i=1}^{N_s}\gamma_i^2} \geq 1 - \varepsilon_{tol}$ for some prescribed tolerance $\varepsilon_{tol}$. The POD basis functions are then given as 
\begin{equation}
    \xi_k:=\frac{1}{\sqrt{N_s\gamma_k}}\sum_{m=1}^{N_s}(\boldsymbol{w}_k)_m u_h(s_m),\quad \text{for } 1\leq k \leq N_k.
\end{equation} 
 The POD basis offers an optimal representation of the full order solutions in the $L^2$/$l_2$-sense. 
 We construct the POD basis functions using $I_s=I_r\times I_i$, which are then used to calculate $(\wr,\wi)$ by solving \cref{eq:min problem Weeks method sr}. Moreover, the same POD basis functions can be used for the time domain RB approximation of seismograms and wavefields. 
 \begin{remark}
      We recall (\cref{rem:opti_para_week}) that we compute $(\wr,\wi)$ only once for a fixed $\paramu\in\Pspace$ and reuse them when $\paramu$ is varied. In our numerical results, we observe that by treating the (coarsely sampled) imaginary part of $\hat{s}\in I_w$ as the parameter, the number of full order snapshots and the number of POD basis functions to reach the desired tolerance are reduced. Therefore, the combined wall-clock time to construct ROMs and approximating seismograms is significantly smaller than solving the FOM relying on an implicit Newmark-beta method. We conjecture that this is a potential approach to expedite seismological problems, such as FWI, where the ROM might need to be updated at every iteration step due to many parameters involved. 
 \end{remark}

\begin{remark} \label{remark: POD vs SPOD}
In contrast to other POD methods that construct separate POD basis spaces for the real and imaginary parts \cite{tonn2011comparison} or extend them into a single real-valued matrix \cite{Bigoni2020SimulationbasedMonitoring,Peng2016SymplecticSystems} to form a real-valued POD basis space, we construct a complex-valued POD basis from complex-valued snapshots. As demonstrated in \cref{fig:SPOD},
other POD-based approaches \cite{Bigoni2020SimulationbasedMonitoring,Peng2016SymplecticSystems} may only partially capture the interdependence relation, which can be crucial in heterogeneous mediums with sharp solution variations, potentially leading to a high-dimensional POD space. The use of a complex-valued POD basis, as introduced for optimal control problems in \cite{volkwein2001optimal}, is standard, and has also been successfully used, e.g., for wave motion analysis in \cite{feeny2008complex}, and Hamiltonian systems in \cite{Peng2016SymplecticSystems}. 
\end{remark}

\begin{remark}[Computational realization] \label{rem:PODvsGreedy}
 The POD method requires solving the FOM $N_s$ times and approximating  \cref{eq:CPOD correlation matrix EVP}, which can be costly for large $N_s$. Randomized methods can be utilized to efficiently approximate \cref{eq:CPOD correlation matrix EVP} (see, e.g., \cite{halko2011finding}). Moreover, by making use of parallel computations, the computation of the POD basis and the RB approximation of seismograms can be achieved in a fraction of the wall-clock time of an implicit Newmark-beta method \cite{newmark1959method}. Alternative to the POD method, the Greedy algorithm \cite{veroy2003posteriori}, presented in \cref{sec: Greedy algorithm}, is a sequential algorithm requiring only $\mathcal{O}(N_k)$ many full order solutions. 
\end{remark}

\subsection{Greedy algorithm} 
\label{sec: Greedy algorithm}
 The objective of the Greedy algorithm \cite{veroy2003posteriori} is to iteratively construct an RB space $\Urb:=\text{span}\{u_h(s_1),\ldots,u_h(s_{k}) \}$ and a corresponding set of frequency parameters $S_k:=\{s_1,\ldots,s_k\}$, where we add in each iteration the snapshot $u_h(s)$ worst approximated by $\U_{k-1}$ to the RB space. 

 In a generic $k^{\text{th}}$ iteration, we identify the parameter $s_{k+1}\in \tilde{I}_s\subset I_s$ as 
\begin{equation}
    s_{k+1}:=\text{arg max}_{s\in \Tilde{I}_s} \;{\Delta}_{k,f}(s), \label{eq:Greedy s max}
\end{equation}
and enrich the RB space $\Urb$ with the full order solution $u_h(s_{k+1})$. 
We terminate the algorithm when ${\Delta}_{k,f}(s_{k+1})$ is below the required tolerance or the maximum number of RB functions is reached. For stability reasons, we orthonormalize the basis functions in $\Urb$ using the modified Gram-Schmidt procedure. The Greedy algorithm constructs an RB space which is quasi-optimal with respect to the maximum norm over $\tilde{I}_s$ \cite{MR2821591,buffa2012priori,devore2013greedy}. We highlight that thanks to the latter (in particular \cite[Theorem 3.5]{MR2821591} and \cite[Theorem 3.2]{devore2013greedy}) and \cref{prop:kolmogorov}, we obtain that the RB space constructed with the Greedy algorithm will yield an exponentially converging approximation.


\section{Goal-oriented parametric model order reduction}
\label{sec:parametric reduction}
In this section, we construct $\Urb\subset\Uh$ by taking into account the variation of $\paramu\in\Pspace$ such that the RB approximations can be used in, e.g., seismic tomography problems where the earth model is for example updated as a result of an outer optimization procedure (cf., \cite[sec. 7.6]{hawkins2023model}) or the development of monitoring tools of seismicity using simulation data. 
Inspired by the time domain POD-Greedy algorithm \cite{haasdonk2008reduced}, we present a frequency domain POD-Greedy algorithm and a Greedy algorithm that are driven by the a posteriori error estimator \cref{eq:OoI estimate TD} for the time domain RB approximation error in the seismograms. Our approach is novel in that we use low-frequency restriction to ensure rapid convergence of the algorithms while not computing any time domain solutions. 

\subsection{POD-Greedy algorithm}\label{sec:PODGreedy_method}
The objective of the POD-Greedy algorithm is to iteratively construct $\Urb:={\rm span}\{\xi_1(\paramu_1),\ldots,\xi_k(\paramu_k)\}$ by combining the Greedy algorithm parameter search in the $\paramu-$parameter with the POD method for the $s-$parameter.
In a generic $k^{\text{th}}$ iteration, this is realized by identifying $\paramu_{k+1}$ from a dense training set $\Xi_m\subset \Pspace$ of cardinality $N_p$ as 
\begin{equation}
    \paramu_{k+1}:= {\text{arg}\max}_{\paramu\in\Xi_m}\; \Delta_{k,t}(\paramu),
\end{equation}
where $\Delta_{k,t}(\paramu)$ is defined in \cref{eq:OoI estimate TD}. Next, we enrich $\U_k$ by $\xi_{k+1}(\paramu_{k+1})$, which is the first POD basis function obtained by applying the POD method to
\begin{align}
   e_h(\hat{s};\paramu_{k+1}):= u_h(\hat{s};\paramu_{k+1}) - \sum_{l=1}^{k}\Big(u_h(\hat{s};\paramu_{k+1}),\xi_k(\paramu_l)\Big)_{\U}\xi_k(\paramu_l)\label{eq:POD-Greedy projection error}
\end{align}
 for $\hat{s}\in \tilde{I}_w:=\{\hat{s}_j\in I_w: 0\leq j\leq N_z-1\}$. 
The procedure terminates when $\Delta_{k,t}(\paramu_{k+1})$ is below the required tolerance or the maximum number of RB functions is reached.

\begin{remark}[Comparison with existing approaches] The analogy of the POD-Greedy algorithm above is the time domain POD-Greedy algorithm \cite{haasdonk2008reduced}, which combines the time domain POD with a Greedy algorithm over the parameter set. Compared to the time-dependent variants \cite{glas2020reduced,haasdonk2008reduced}, 
we avoid computing any time domain solutions and only require frequency domain solutions for a few low frequencies to ensure rapid convergence.
\end{remark}

\subsection{Greedy algorithm}
In this subsection, we adapt the Greedy algorithm to iteratively construct an RB space $\Urb:=\text{span}\{u_h(\hat{s}_1;\paramu_1),\ldots,u_h(\hat{s}_k,\paramu_k) \}$ corresponding to the set of parameters $M_k:=\{\paramu_1,\ldots,\paramu_k\}$ and $S_k:=\{\hat{s}_1,\ldots,\hat{s}_k\}$. In contrast to the POD-Greedy algorithm in 
\cref{sec:PODGreedy_method}, we replace the application of the POD method in \cref{eq:POD-Greedy projection error} with a Greedy algorithm search to identify $S_k$ from $\tilde{I}_w$.
 
    That is, in a generic $k^{\text{th}}$ iteration, we identify $\paramu_{k+1}\in\Xi_m$ and $\hat{s}_{k+1}\in \tilde{I}_w$ as
\begin{equation}
    \paramu_{k+1}:= {\text{arg}\max}_{\paramu\in\Xi_m}\; \Delta_{k,t}(\paramu)\; {\rm and }\; \hat{s}_{k+1}:= {\text{arg}\max}_{\hat{s}\in \tilde{I}_w}\; \Delta_{k,f}(\hat{s};\paramu_{k+1}),\label{eq:Greedy_m_stop}
\end{equation}
and enrich the RB space $\Urb$ with the full order solution $u_h(\hat{s}_{k+1};\paramu_{k+1})$ of \cref{eq: Weak formulation discrete}. We terminate the algorithm when $\Delta_{k,t}(\paramu_{k+1})$ is below the required tolerance or the maximum number of RB functions is reached. 

\section{Numerical experiments}\label{sec:Numerical_results}
In this section, we demonstrate the excellent approximation properties of the RB methods proposed in this paper for a realistic earth model of Groningen, the Netherlands, which is described in \cref{sec:Model problem numerical results}. 
In \cref{sec:reduction in the s parameter results} we fix $\paramu$ and demonstrate how the restriction to low frequencies helps to ensure exponential and rapid convergence of the RB approximation errors, where the maximum frequency given by $ s_{max}$ is controlled by the central width of the Ricker wavelet $\mathcal{q}(t)$ (see e.g., \cref{fig:Ricker_wavelet_TD_FD_sol}).  
Furthermore, we highlight the computational advantages (in light of the available computational resources) by demonstrating the RB approximation of the seismograms in a wall-clock time faster than the implicit Newmark-beta method. 
 In \cref{sec:results_lameparameters}, we perform reduction to accommodate changes in the Lam\'e parameters of the considered subsurface model. 
 We consider practically relevant examples, inspired by the seismological applications mentioned in \cref{sec:seismo_application}, to construct the training and test sets.  
We show that the POD-Greedy and Greedy algorithms inherit the good approximation properties of the methods proposed in \cref{sec:MOR for NLI}, which along with our a posteriori error estimator aids in the rapid convergence of the algorithms.
Furthermore, we show that the effectivity of the proposed error estimator, utilized in the POD-Greedy and Greedy algorithms, remains approximately constant for an increasing number of RB functions.

\subsection{Model problem and numerical setup}\label{sec:Model problem numerical results}
\begin{figure}[t]
   \centering
   \subfloat{\includegraphics[width=.4\linewidth]{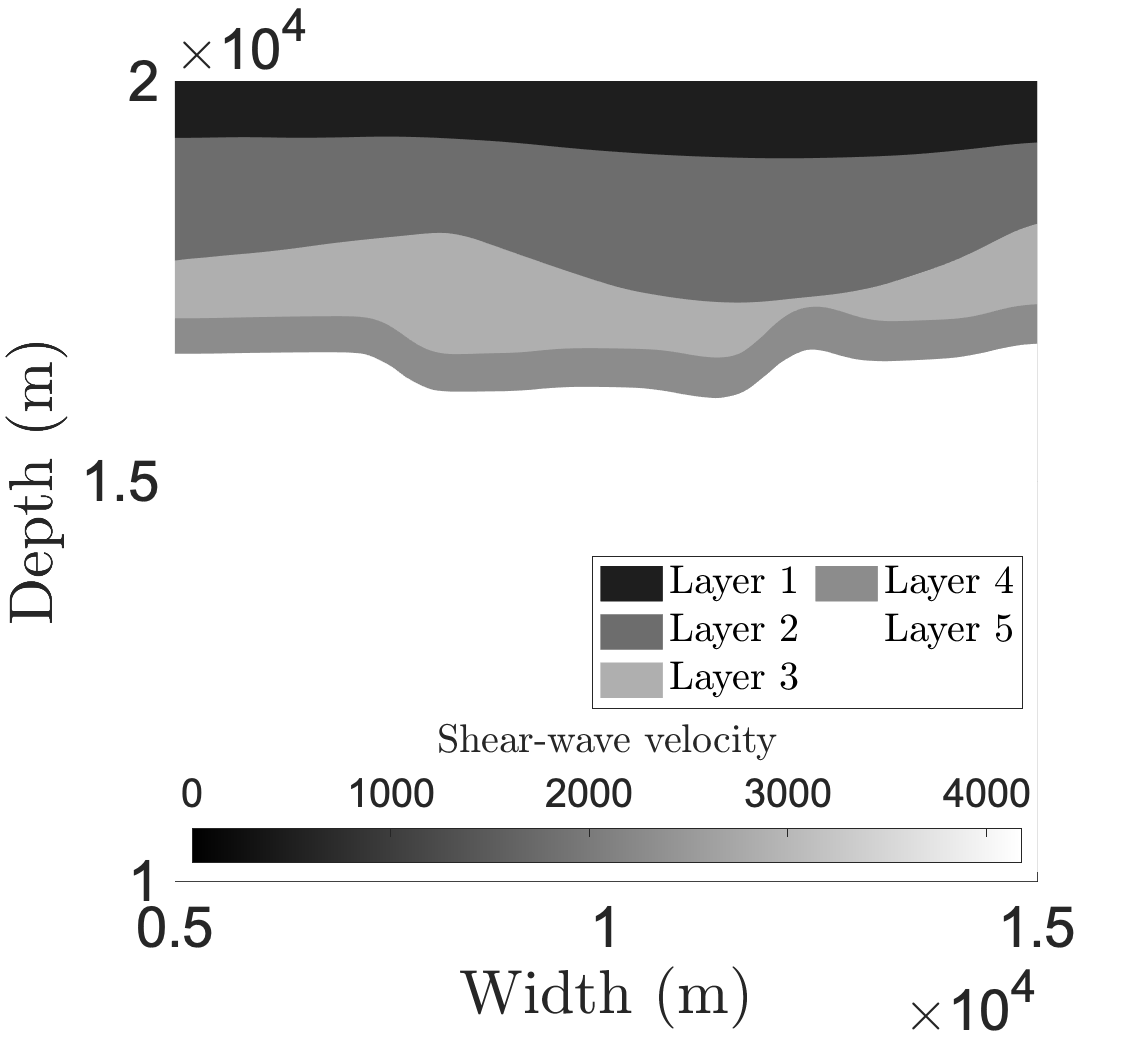}}
   \subfloat{\includegraphics[width=.5\linewidth,trim={0cm -4cm 0 0},clip]{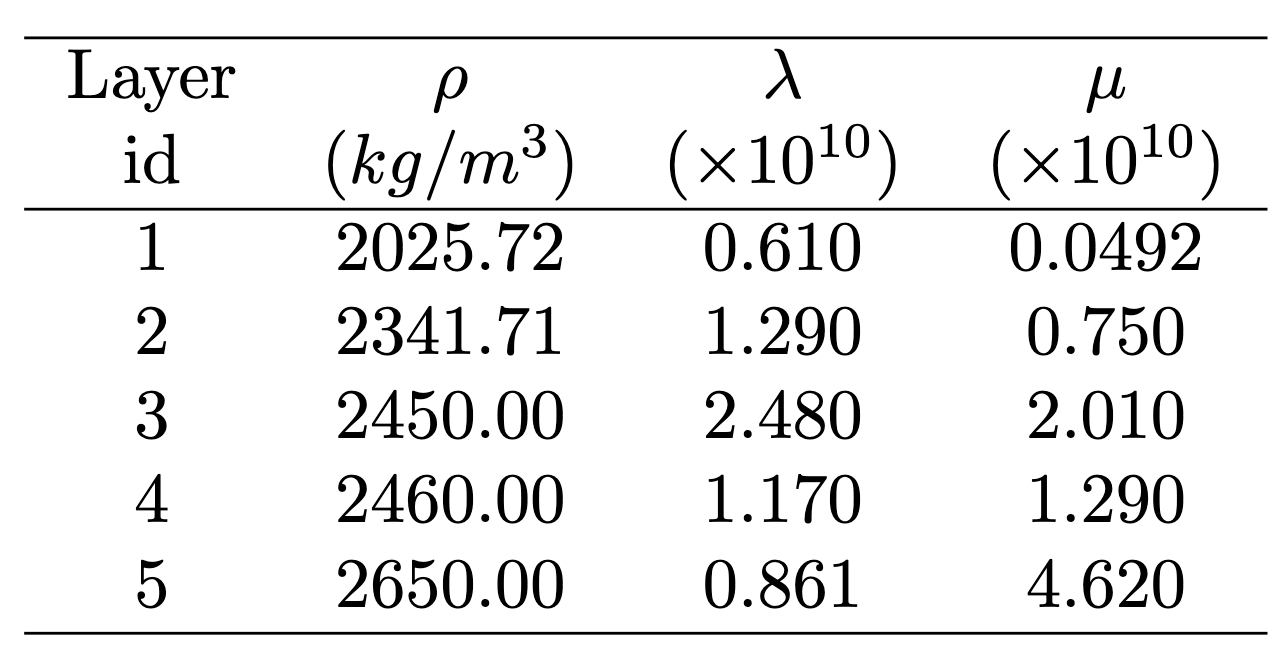}}
   \caption{A 2D semi-realistic model problem of Groningen, the Netherlands \cite{Kruiver2017A, Romijn2017A}, where the source location is assumed to be in layer 4, and receiver locations are kept at the top boundary.}
   \label{fig:model}
\end{figure}

We consider a 2D layered subsurface model of Groningen, the Netherlands, from \cite{Kruiver2017A, Romijn2017A}. The model exhibits pronounced changes in the elastic properties between the layers, as illustrated through shading in \cref{fig:model}, representing the shear-wave velocities. Notably, we observe a small shear-wave velocity near the subsurface. 
To minimize the influence of reflections, we extend the model domain by $5000 \,$meters horizontally in both directions and $10^{4}\,$meters vertically on the lower section of the subsurface. Moreover, we extend the interfaces within the model using straight lines. We use GMSH \cite{geuzaine2009gmsh} to obtain a quasi-uniform triangulated mesh of the model and the assembly routine as in \cite{koko2016} for the discrete problem. The associated conforming finite element space $\Uh$ of continuous piecewise linear functions has dimension $N_h=233,318$. 
The parameter values for the source introduced in \cref{eq:Sources} are set to $\sigma_m=80$, and $k_a=160$, and the source is located in layer 4 at $x_0=(9915.8\,m,1641.7\,m)$. Moreover, we assume that the receiver is positioned at the surface at $( 9475.2\,m, 20000\,m)$, unless stated otherwise.
For time approximations, we choose $N_z=608$ to define the contour as in \cref{eq:a_p truncated}. We obtain reference time domain solutions for comparisons using the implicit Newmark-beta method \cite{newmark1959method} with time-step $10^{-3}$ for $t\in[0,20]$ yielding $20001$ time steps. Finally, the results presented have been obtained using MATLAB 2023b on a GLNXA64 machine with an AMD EPYC 7742 with 2$\times$64-Core processor and 1TB memory, and the source code is available online at {\cite{CODES}}. 

\subsection{Reduction for the $s-$parameter with \textit{m} fixed}\label{sec:reduction in the s parameter results}
In this subsection, we treat the real and imaginary parts of the frequency $s:=\reals+\imagI \imags$ as parameters, and use the material properties described in \cref{fig:model}. First, we discuss the influence of maximum frequency content on the dimension of the RB spaces. We construct a finite-dimensional training set $(\reals,\imags)\in \tilde{I}_s:=[0.12,0.62]\times [0.01, s_{max}]$ of cardinality 1024 using a uniform random sampling.
 We recall our observation from \cref{fig:Ricker_wavelet_TD_FD_sol}, where we noted that increasing $\alpha$ leads to a steeper gradient of the time domain source function which was reflected in a wider frequency spectrum for the decay of $\|u(\reals+\imagI \imags)\|_{\U}$ with increasing $\imags$.
We consider $\alpha$ from the set $\{1.0\pi,1.5\pi,2.0\pi\}$, and find $ s_{max}$ for each choice of $\alpha$ as $\{11.75,17.66,23.24\}$ respectively, where $ s_{max}$ is the smallest value to satisfy $\kappa(\reals + \imagI  s_{max})\leq 10^{-4}$; see \cref{remark:choice_smax}.
 Here, we have chosen $\reals=0.35$ and note that similar values for $s_{max}$ are obtained for arbitrary $\reals\in [0.12,0.62]$, which might be explained by the exponential decay of $|q(\reals+\imagI \imags)|$ in the considered frequency region. 
 In our figures, we use a semi-logarithmic scale, and the notation $[\cdot]_{0}=\cdot/{\max_{s\in \tilde{I}_s}\|z_k(s)\|_2}$ for the estimator and  the true error. 

 \begin{figure}[t!]
   \centering
   \subfloat[]{\label{subfig:singular_valuedecay}\includegraphics[width=.33\linewidth]{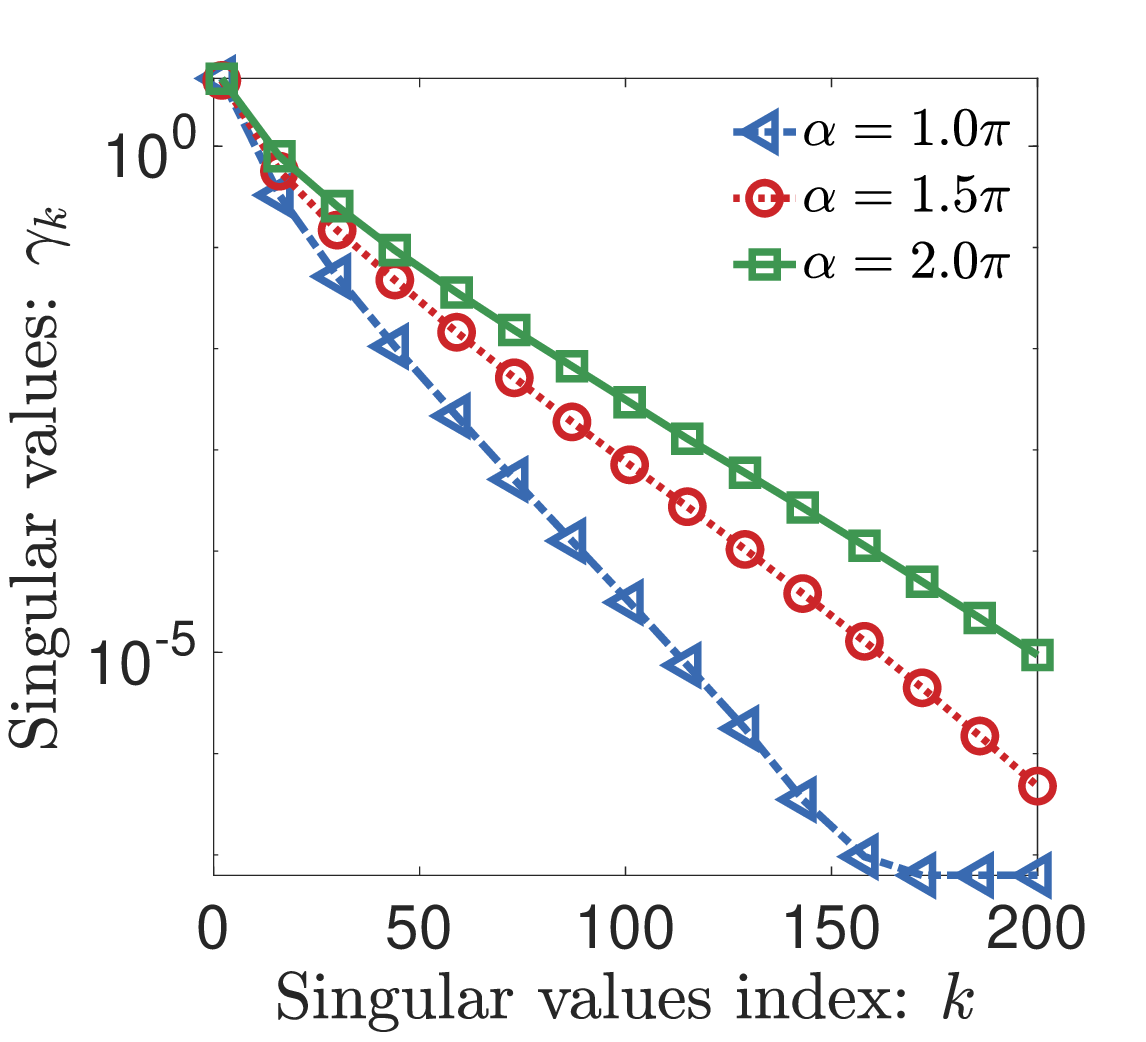}}
   \subfloat[]{\label{subfig:Greedy_fixedm_Est}\includegraphics[width=.33\linewidth]{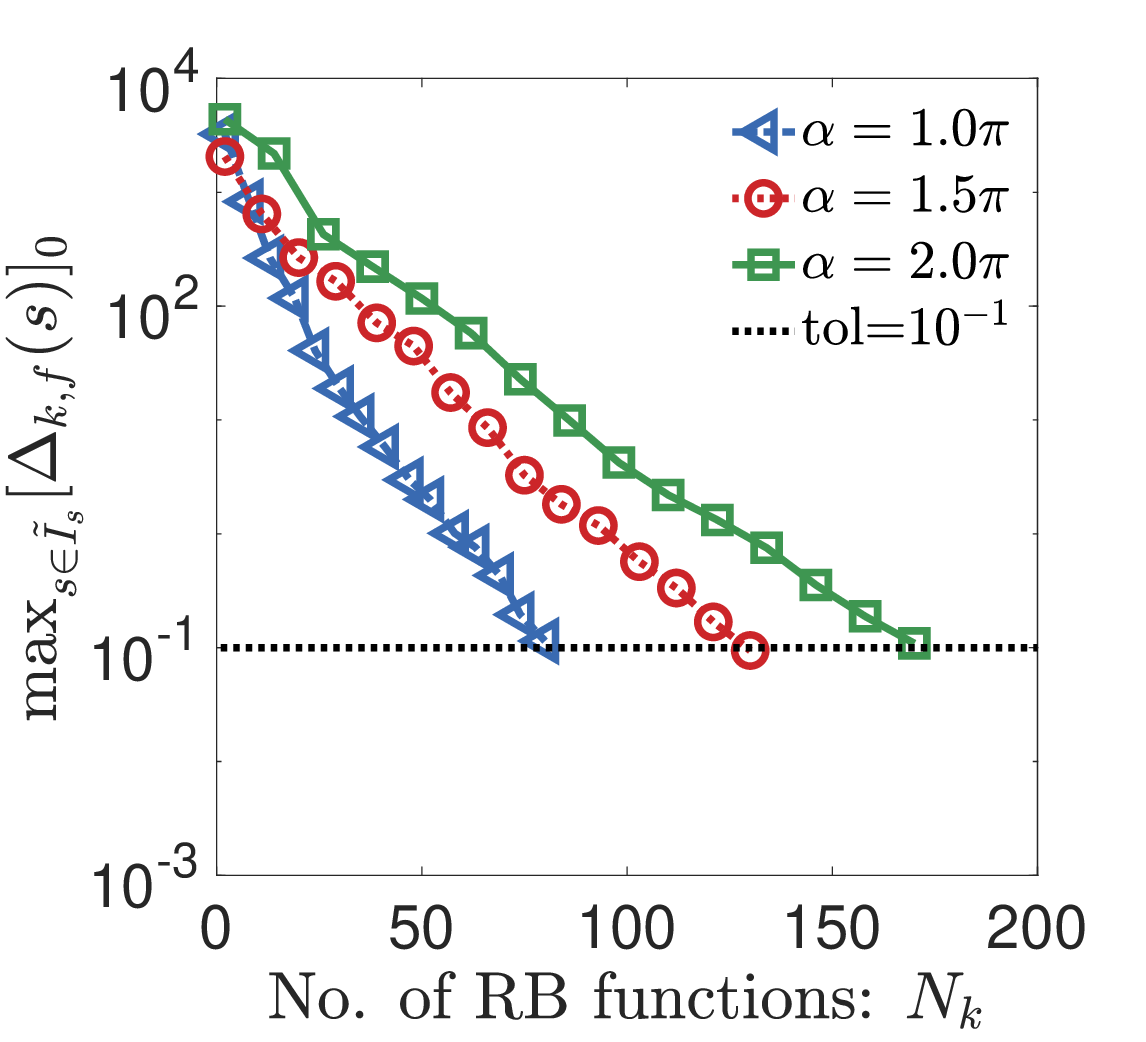}}
   \subfloat[]{\label{subfig:Greedy True_fixedm}\includegraphics[width=.33\linewidth]{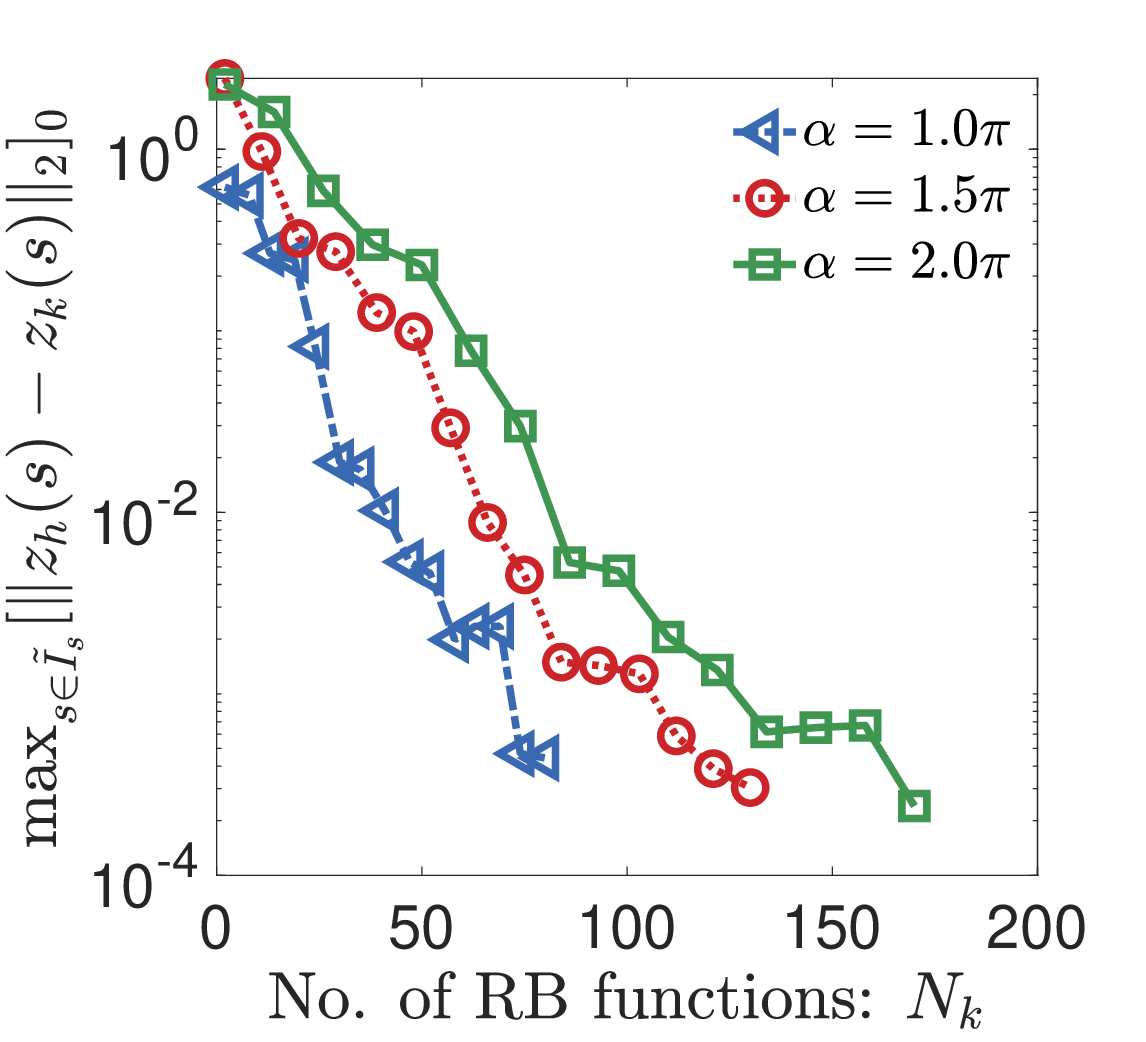}} \caption{In (a), we show the singular value decay for the POD method. In (b) and (c), we plot the convergence of the Greedy algorithm for the error estimator and the true errors, respectively.}
   \label{fig:Fixed_m_offlinephase}
 \end{figure}

In \cref{subfig:singular_valuedecay,subfig:Greedy_fixedm_Est}, we observe an exponential decay of the singular values for the POD method and the error estimator during the Greedy algorithm. Moreover, in \cref{subfig:Greedy True_fixedm}, we observe that the convergence of the true error in the frequency domain seismogram for the Greedy algorithm is very similar to the error estimator. However, as we increase $\alpha$, both the decay of the singular values and the convergence of the Greedy algorithm becomes relatively slower, as expected, see \cref{eq:n_width_kolmo}. Furthermore, we emphasize that we would potentially require a large number of RB functions if the desired frequency spectrum is further increased. Therefore, we have used the uniform lower-bound for the discrete inf-sup constant to avoid misleading selection of $s\in\tilde{I}_s$, which can happen if the linear decay in the discrete inf-sup constant starts dominating in the error estimator for higher $\alpha$ (cf., \cref{eq:effectivity}).  

 In \cref{subfig:Meanerror_CPOD,subfig:Meanerror_Greedy}, we observe a similar behavior of the error in the frequency domain RB approximations of the output over a uniform random test set $\tilde{I}_t$ of cardinality 512. Moreover, we note that the ROMs obtained from the POD method and the Greedy algorithm converge approximately at the same rate. Here, targeting the construction of the RB space to the seismograms does not facilitate any further reduction. We conjecture that the correlation between the seismograms and the solutions is stronger than the symmetries present in the solution. Therefore, the RB functions that contribute most to the seismograms are often also dominant in the solutions, which results in a very similar performance for both methods. 

 \begin{figure}[t!]
   \centering
   \subfloat[POD method]{\label{subfig:Meanerror_CPOD}\includegraphics[width=.33\linewidth,trim={1.2cm 0cm 16.3cm 0.0cm},clip]{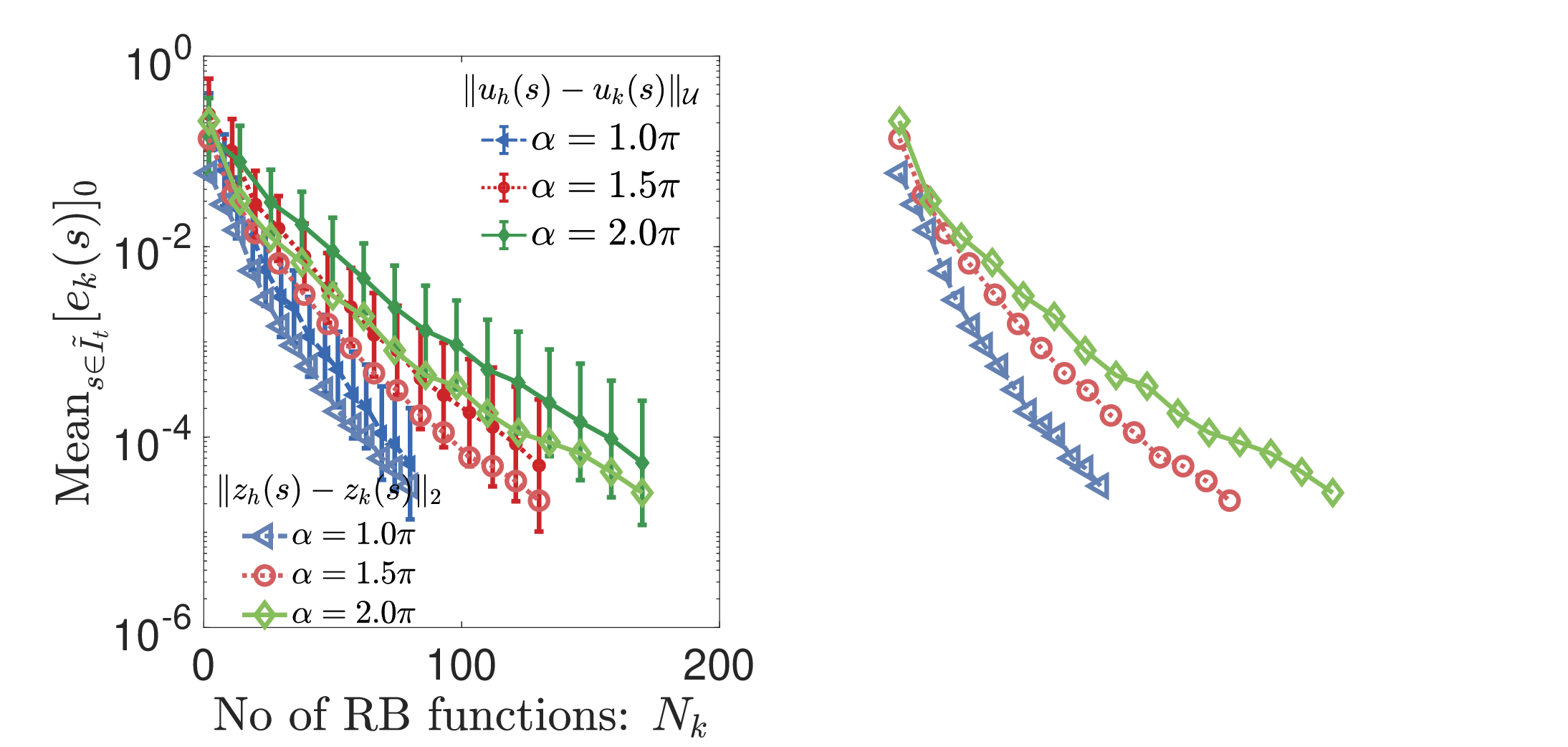}}
   \subfloat[Greedy algorithm]{\label{subfig:Meanerror_Greedy}\includegraphics[width=.33\linewidth,trim={1.2cm 0cm 16.3cm 0.0cm},clip]{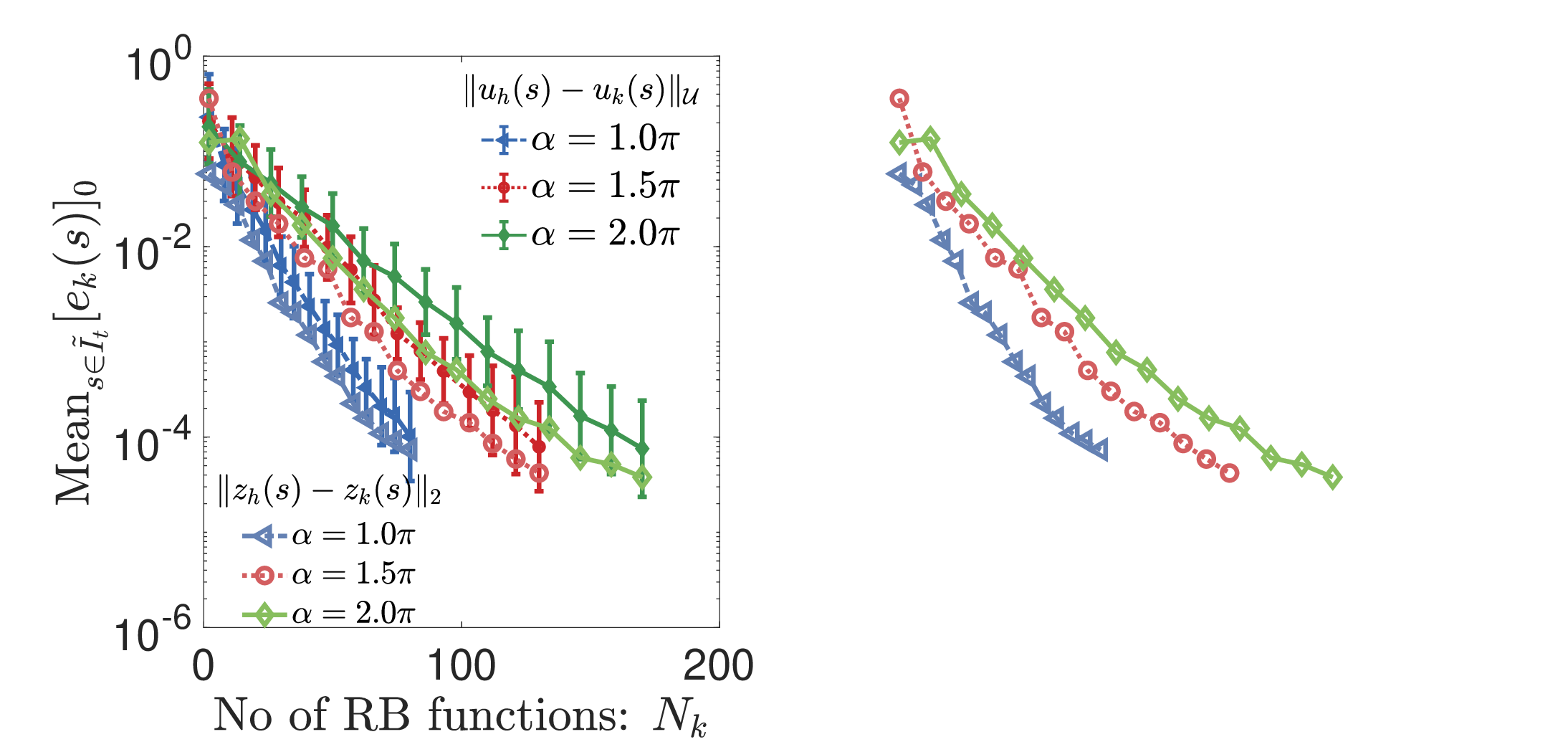}}
   \subfloat[Time domain]%
   {\label{subfig:Meanerror_TD}\includegraphics[width=.33\linewidth,trim={1.2cm 0cm 16.3cm 1.2cm},clip]{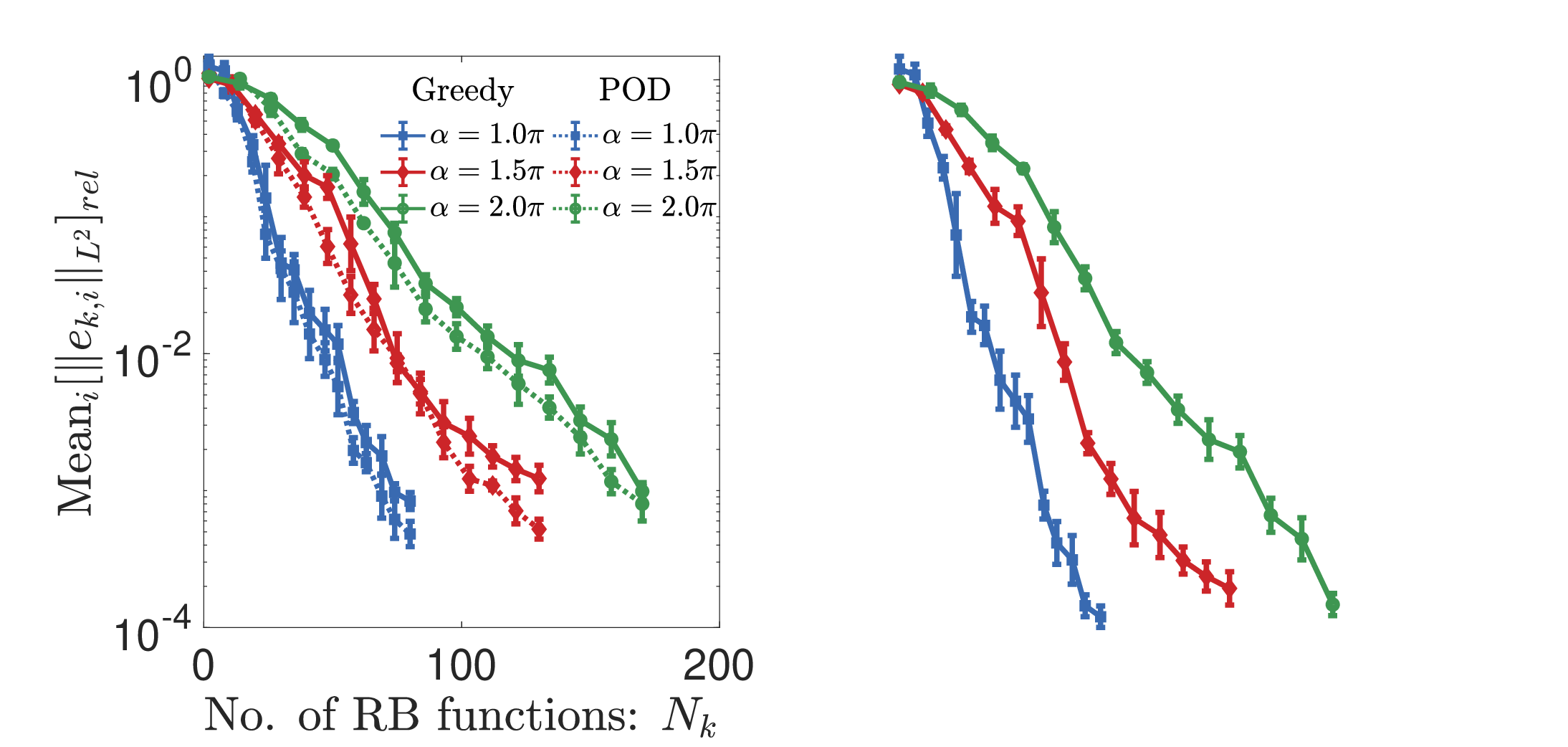}}
   \caption{In (a) and (b), we plot the frequency domain mean error of the RB approximations in the full domain and seismograms over a test set for the POD and the Greedy algorithm, respectively. In (c), we plot the time domain mean error $[\|e_{k,i}\|_{L^2}]_{rel}={\|\OoI_i - \hat{\OoI}_{k,i}\|_{L^2(0,T;\mathds{R}^2)}}/{\|\OoI_i\|_{L^2(0,T;\mathds{R}^2)}}$ over a set $I_r$ containing 5 uniformly placed receiver locations at the surface. The length of the error bars is the standard deviation.}
   \label{fig:Fixedm_online}
 \end{figure}

 Next, the free parameters required to define the contour $\hat{s}$ as in \cref{eq:a_p truncated}, are calculated for $\alpha=2\pi$ using a POD-based ROM of dimension $N_k=176$. Here, we have benefited from the guaranteed accuracy of RB approximations observed in \cref{subfig:Meanerror_CPOD} to calculate $(\wr,\wi)=(0.26,15.20)$. We chose the most dominant case of $\alpha$ for the selection of free parameters, which allows to reuse them for all other values of $\alpha$ as noted below. In practice, the free parameters are computed for the relevant problem of interest. {\color{black}To calculate $(\wr,\wi)$, we have used the code from \cite{Brio2005ApplicationExponential} available under the BSD 3-Clause license. The code uses MATLAB's function \texttt{fminbnd} to compute $(\wr,\wi)$ for an arbitrary Laplace transformed function. We have adapted the code to our problem by replacing the computation of the expansion coefficients for general functions with a ROM of the elastic wave equation. The rest of the algorithm has been left unchanged, and converges in a few iterations.}

The RB constructed earlier retains its good approximation properties in the time domain. In \cref{subfig:Meanerror_TD}, we plot the relative $L^2([0,20];\mathds{R}^2)$ norm errors for time domain RB approximation of seismograms over increasing dimension of ROMs. We note that the RB approximation error converges at approximately the same rate as observed during the construction of RB for different choices of $\alpha$. Furthermore, in \cref{fig:Seismo_fixedm}, we plot the horizontal and vertical component of seismograms approximated using ROMs of sizes $N_k=76,126,167$ for $\alpha=1\pi,1.5\pi,2\pi$, respectively. Here, the errors are computed using time domain reference solutions obtained from the implicit Newmark-beta method. We observe a good approximation of the time domain seismograms for all cases, where we require more RB functions for relatively large values of $\alpha$ to guarantee an accuracy of $10^{-3}$ due to the highly oscillatory response noted in the seismograms. This observation aligns with our earlier discussion on frequency content, where we note that the general characteristics of the time domain seismograms stays the same for all $\alpha$, and each increment in $\alpha$ add fine scale details to the existing signal. 

\begin{figure}[t!]
   \centering
\subfloat{\label{subfig:Hcompalpha1pi}\includegraphics[width=.33\linewidth]{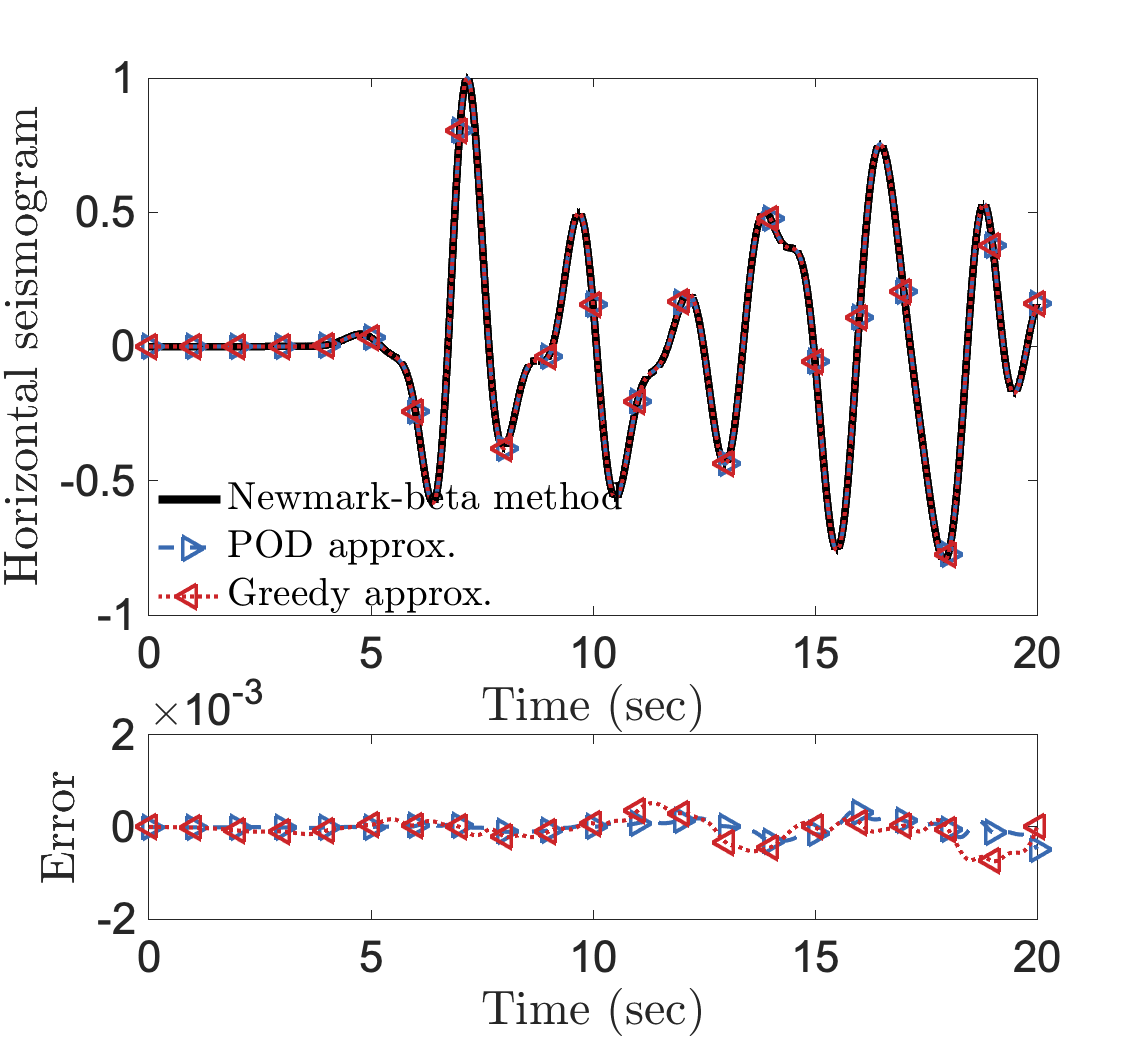}}   \subfloat{\label{subfig:Hcompalpha1_5pi}\includegraphics[width=.33\linewidth]{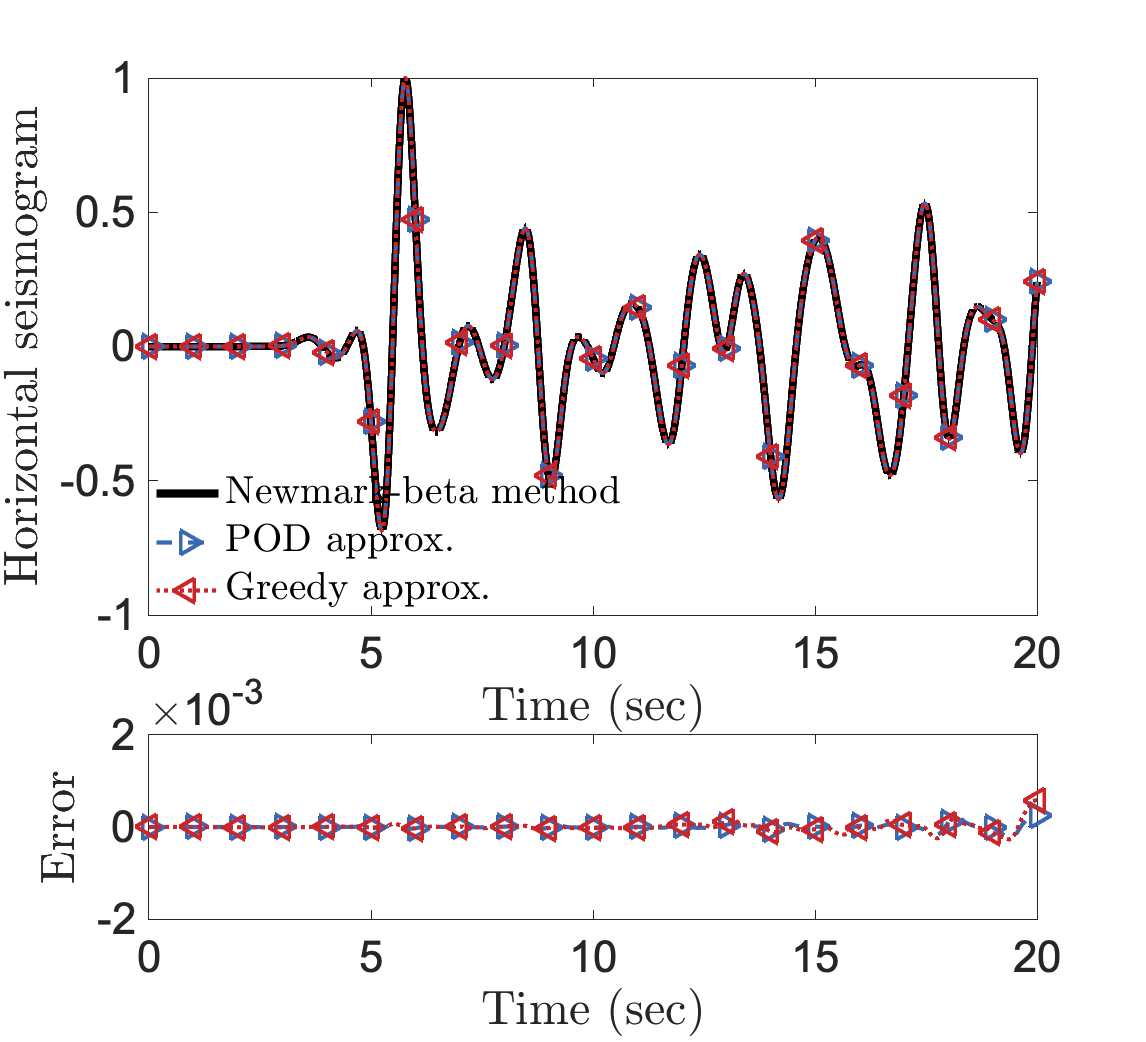}} \subfloat{\label{subfig:Hcompalpha2pi}\includegraphics[width=.33\linewidth]{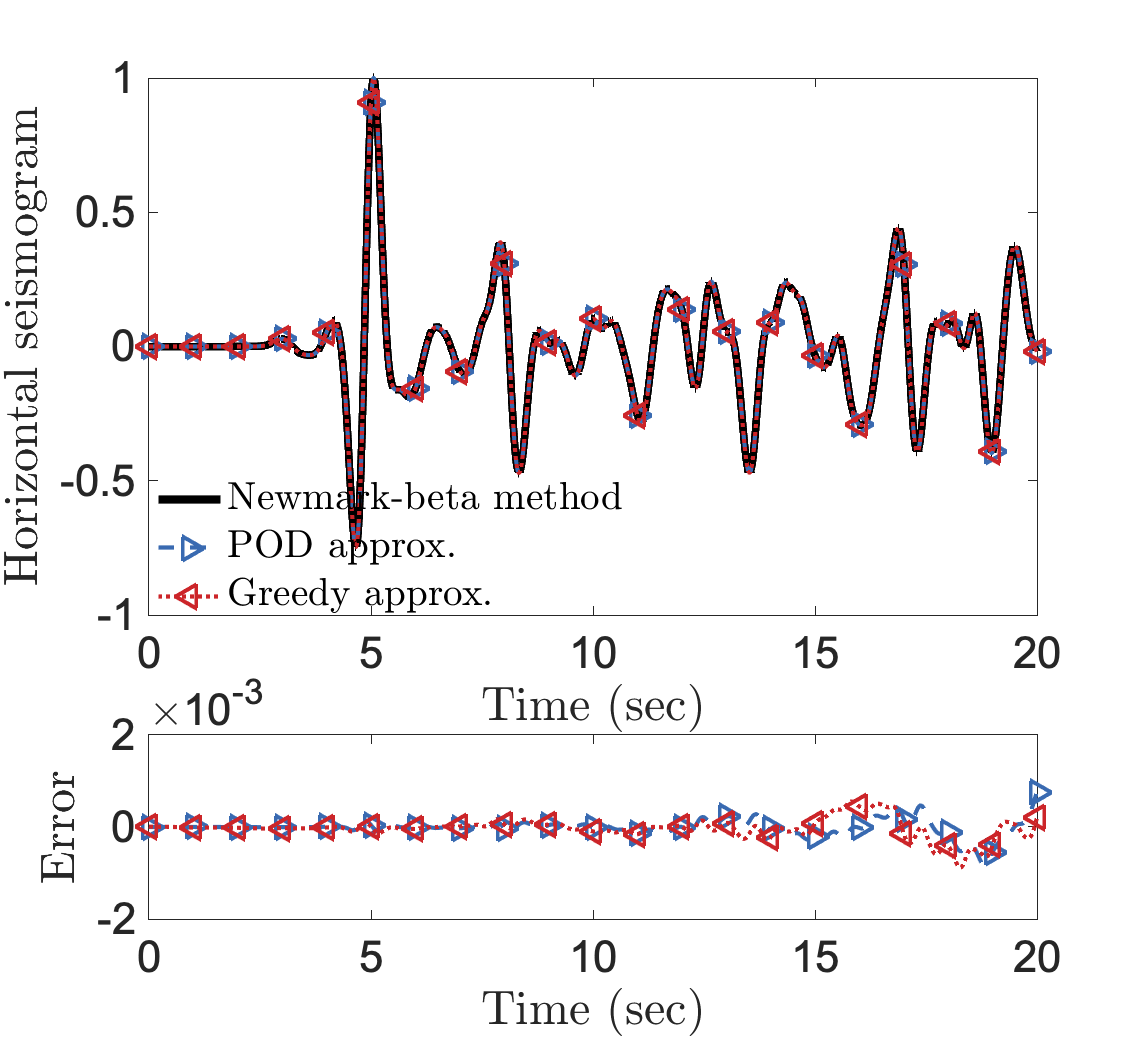}} \\ \vspace{-0.35cm}
\setcounter{subfigure}{0}
   \subfloat[$\alpha=1.0\pi$]{\label{subfig:Vcompalpha1pi}\includegraphics[width=.33\linewidth]{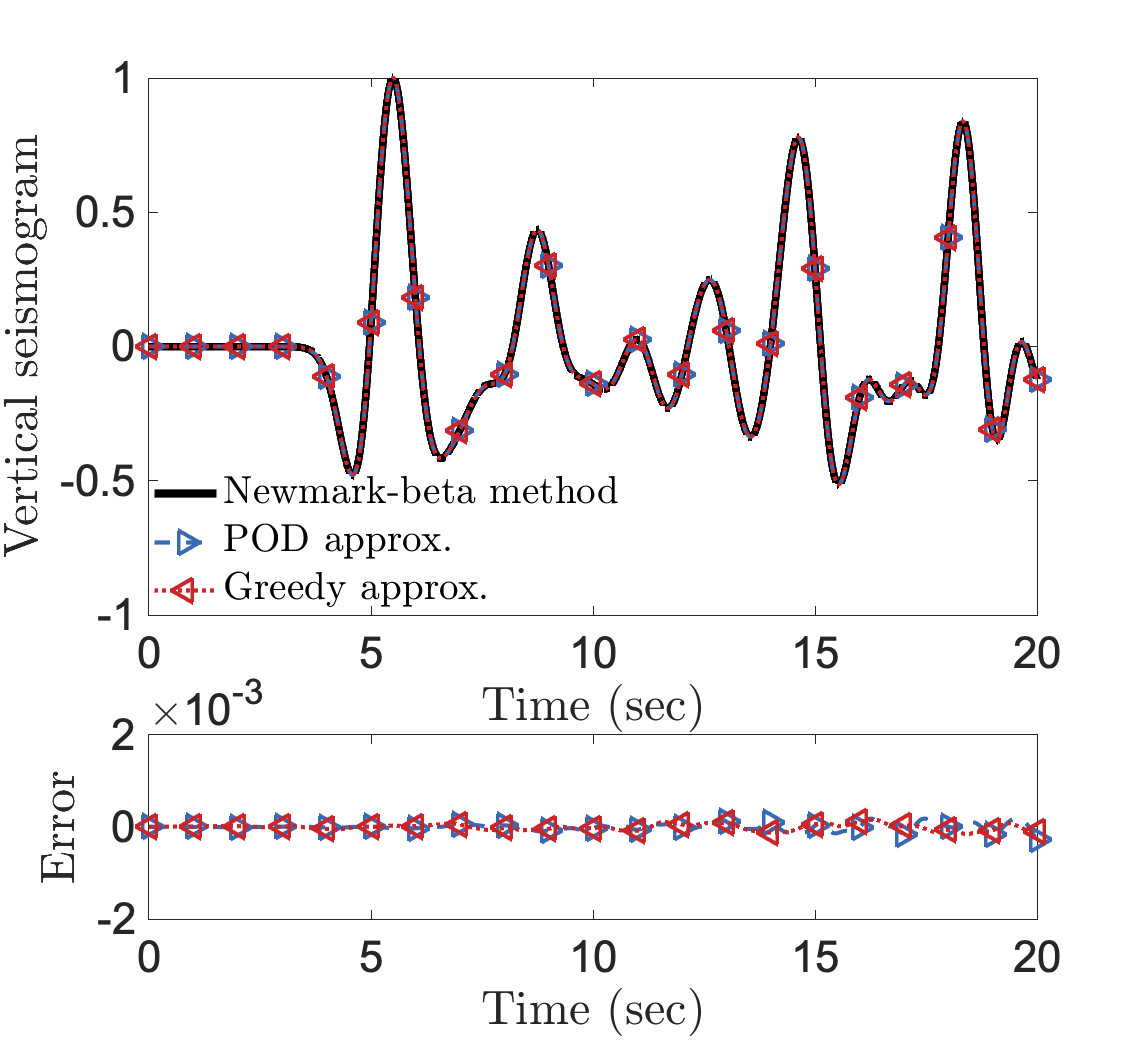}}
   \subfloat[$\alpha=1.5\pi$]{\label{subfig:Vcompalpha1_5pi}\includegraphics[width=.33\linewidth]{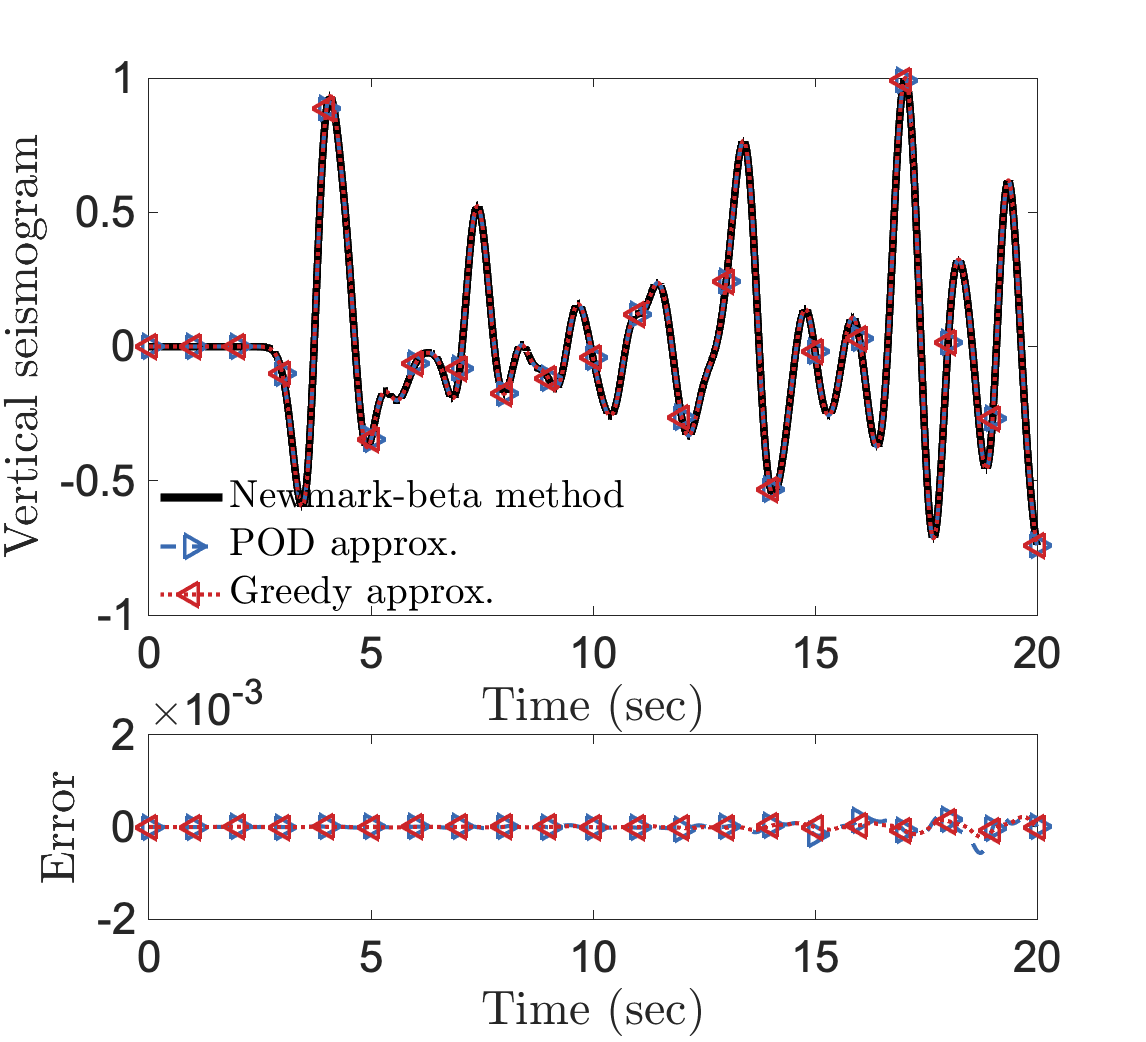}}
   \subfloat[$\alpha=2.0\pi$]{\label{subfig:Vcompalpha2pi}\includegraphics[width=.33\linewidth]{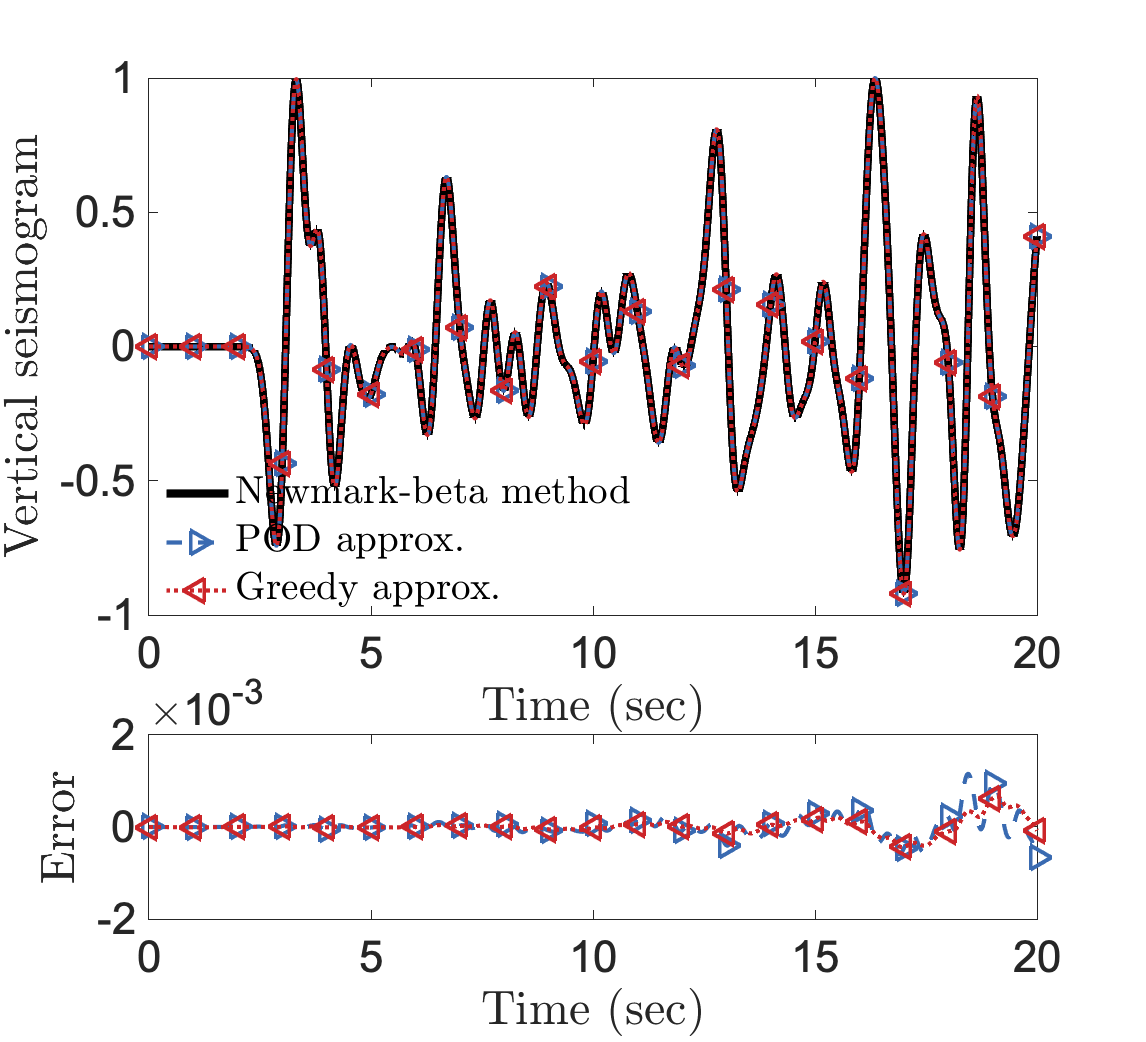}}
   \caption{Error comparison of the time domain horizontal and vertical components of the seismograms computed via ROMs and implicit Newmark-beta method. We scale the seismograms with the maximum absolute value.}
   \label{fig:Seismo_fixedm}
 \end{figure}
  
\begin{figure}[t!]
   \centering
   \subfloat[Offline: Construction]{\label{subfig:wall_clockOffline}\includegraphics[width=.33\linewidth]{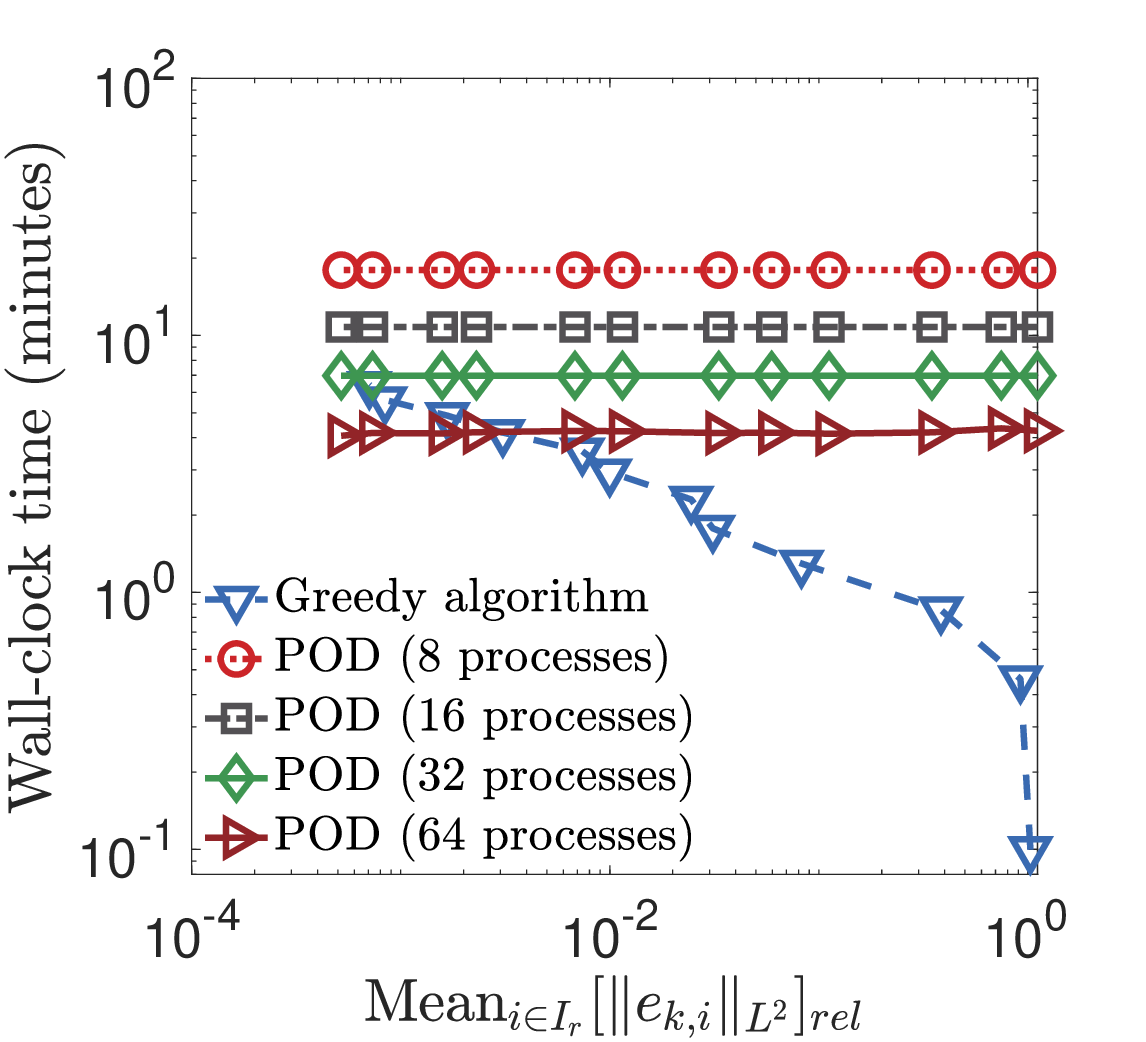}}
   \subfloat[Online: Increment $N_k$]{\label{subfig:wall_clockonline}\includegraphics[width=.33\linewidth]{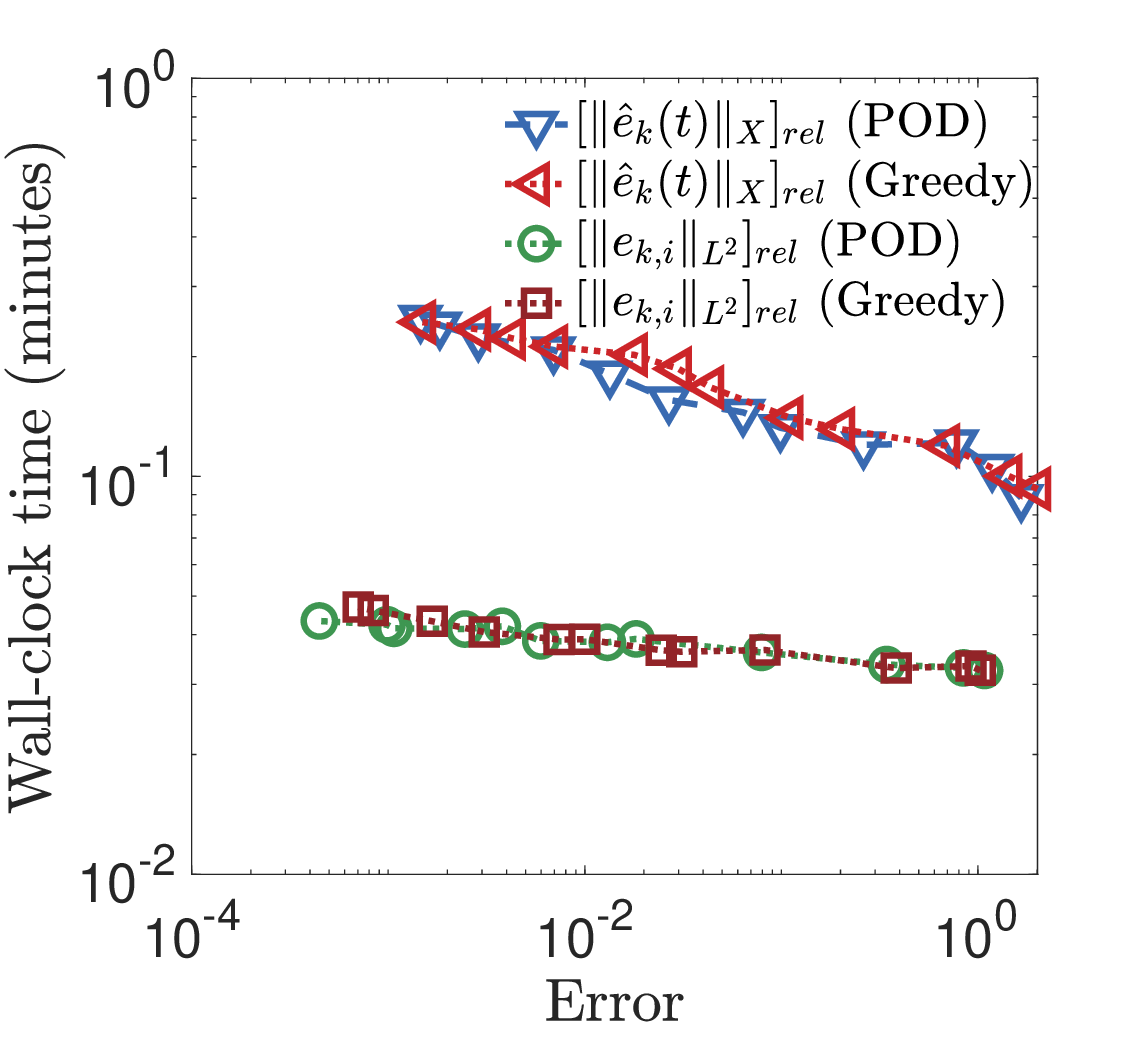}}
   \subfloat[Online: Increment time-step]{\label{subfig:wall_clockNBM_FOM_ROM}\includegraphics[width=.33\linewidth]{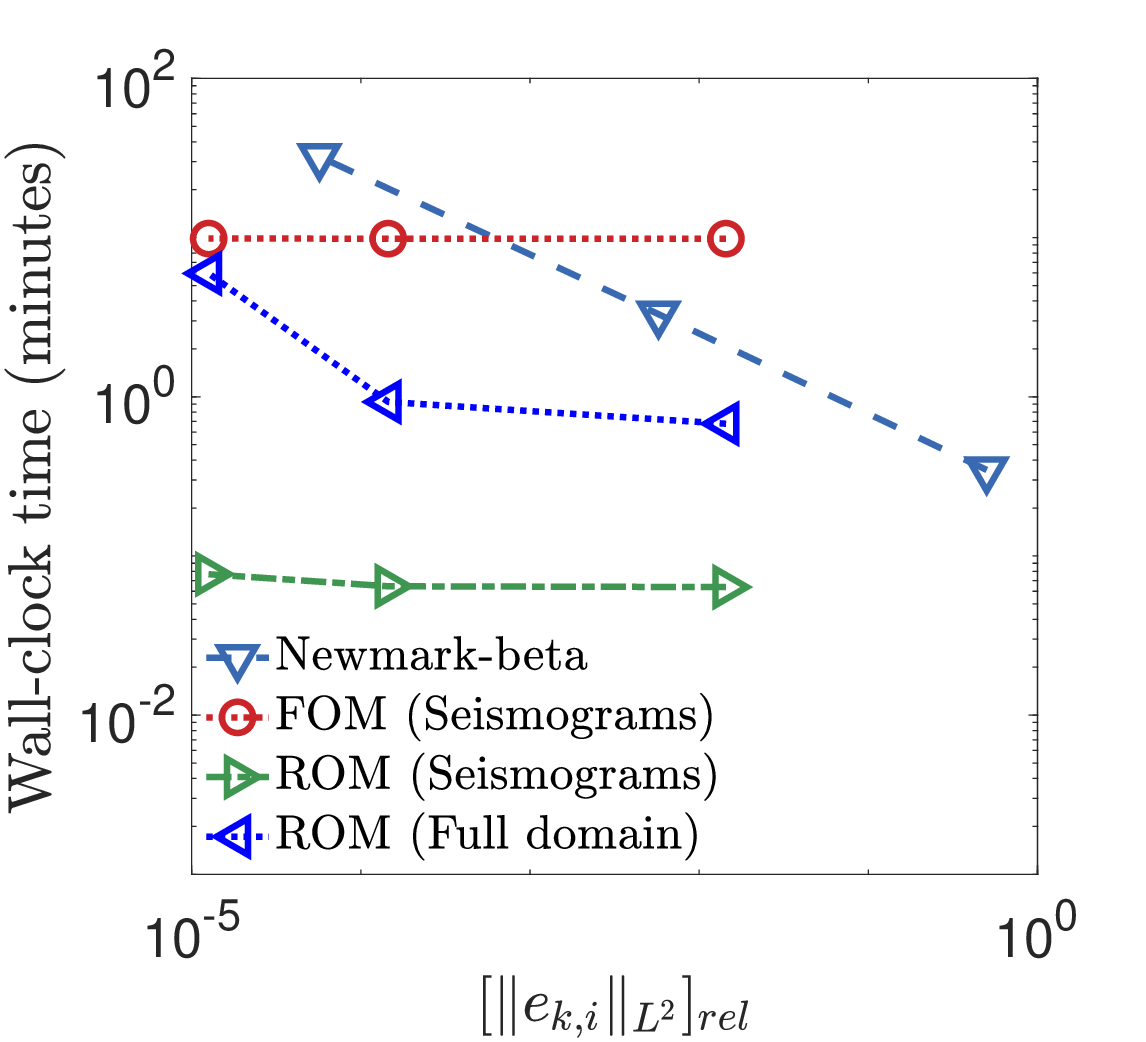}}
   \caption{In (a), we compare the offline time in a sequential and a multi-processor setting. In (b) and (c), we illustrate the respective error quantities and online wall-clock times to compute seismograms and full wavefield using a ROM or an implicit Newmark-beta method. We use the notation $[\|\hat{e}_{k}(t)\|_{X}]_{rel}={\|\utdc_h(t) - \hat{\utdc}_{k}(t)\|_{\U}}/{\|\utdc_h(t)\|_{\U}}$, where $\utdc_h(t)$ is the time domain wavefield computed using the implicit Newmark-beta method and $\hat{\utdc}_{k}(t)$ is the ROM wavefield approximation. 
}
   \label{fig:wallclock_time}
 \end{figure}

Next, we discuss the computational realization of the proposed approach. To that end, we illustrate the wall-clock times incurred in the offline and online phases in \cref{fig:wallclock_time}. We use the terminology offline phase and online phase to highlight the elapsed times for the RB spaces construction and the RB approximations of time-dependent seismograms or wavefields, respectively. Here, we consider the case of $\alpha=1.0\pi$, and the wall-clock times for the offline and online phases are averaged over 50 runs. For the implicit Newmark-beta method, we use the elapsed time for a single run. As discussed in \cref{rem:PODvsGreedy}, the choice between the POD method and the Greedy algorithm can be made depending on the available computational resources. As we observe in \cref{subfig:wall_clockOffline}, the wall-clock time for the POD method in a sequential setting can be significantly higher compared to the cases where we increase the number of processors. Meanwhile, the wall-clock time for the Greedy algorithm increases linearly and is comparable with the POD approach utilizing 64 processors. The significant computational cost in the POD method is in the generation of the data set containing solutions at random training parameters. In the Greedy algorithm, the dominating costs are the computation of the full order solutions for the RB space and setting up the a posteriori error estimator; the latter requires a few solutions of the parameter-independent high-dimensional linear system of equations in order to evaluate the error estimator for all frequencies efficiently (cf., \cite{Rozza2008ReducedEquations}).
 
In the online phase, we compare the wall-clock times to compute the time domain seismograms and the full wavefield for $t\in[0,20]$. 
Comparing the respective error quantities for an increasing dimension of the ROMs in \cref{subfig:wall_clockonline}, we observe that  the wall-clock time to approximate the seismograms or wavefield at any given time instance is under a fraction of a minute. Here, the wall-clock time when increasing the accuracy of the RB approximation of the seismograms grows only slightly. In the results, we have used the final time instance, i.e., $t=20$, to compare the error in the entire domain. Given that our objective is a fast approximation of the time domain seismograms, the approximation error in the whole domain can be relatively more.

\begin{figure}[t!]
   \centering
   \subfloat[Wall-clock time]{\label{subfig:Wall_clock_coarsePOD}\includegraphics[width=.33\linewidth]{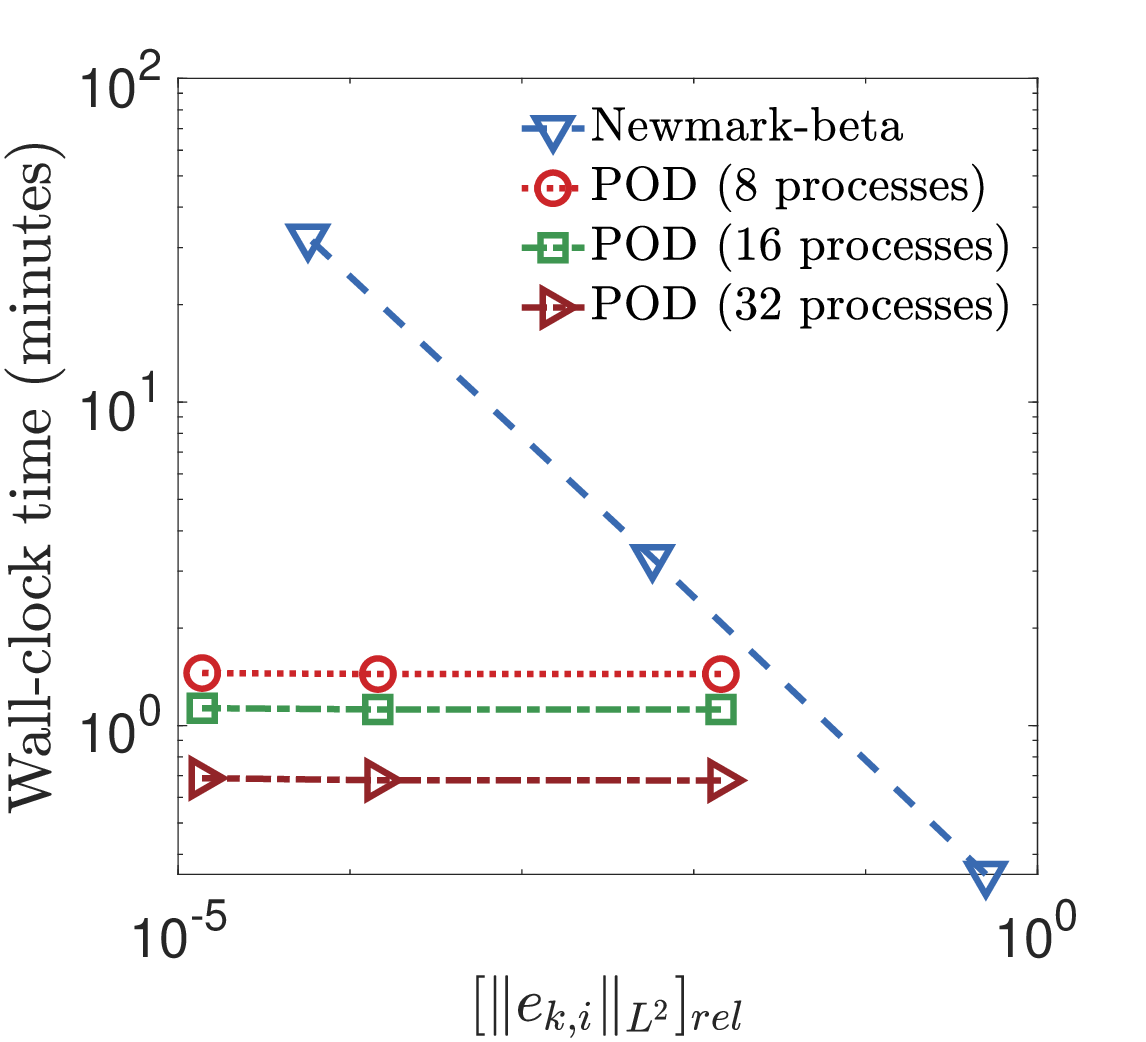}}
   \subfloat[Offline phase]{\label{subfig:POD_decay_scoarse}\includegraphics[width=.33\linewidth]{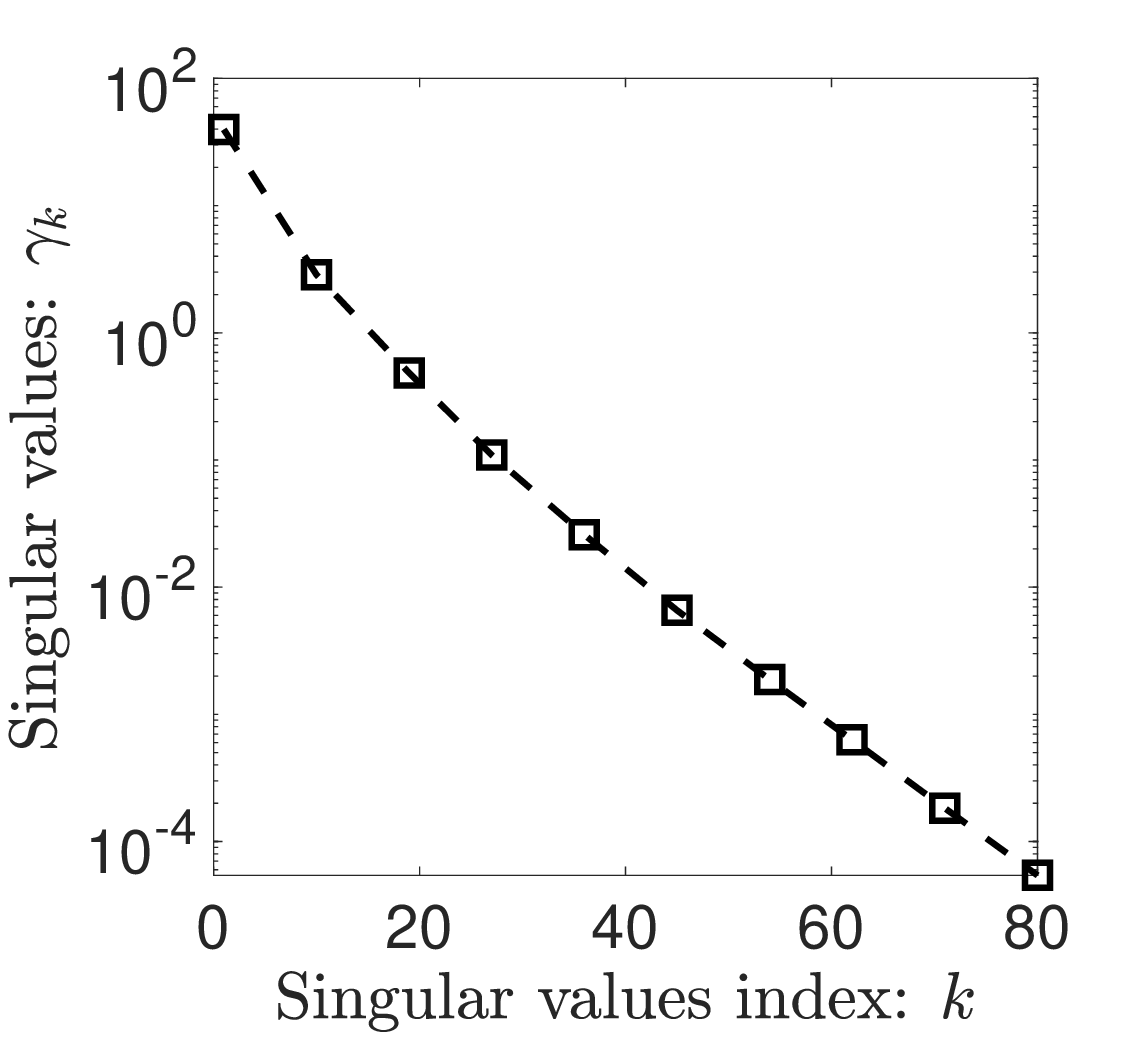}}
   \subfloat[Online phase]{\label{subfig:POD_errorTD_scoarse}\includegraphics[width=.33\linewidth]{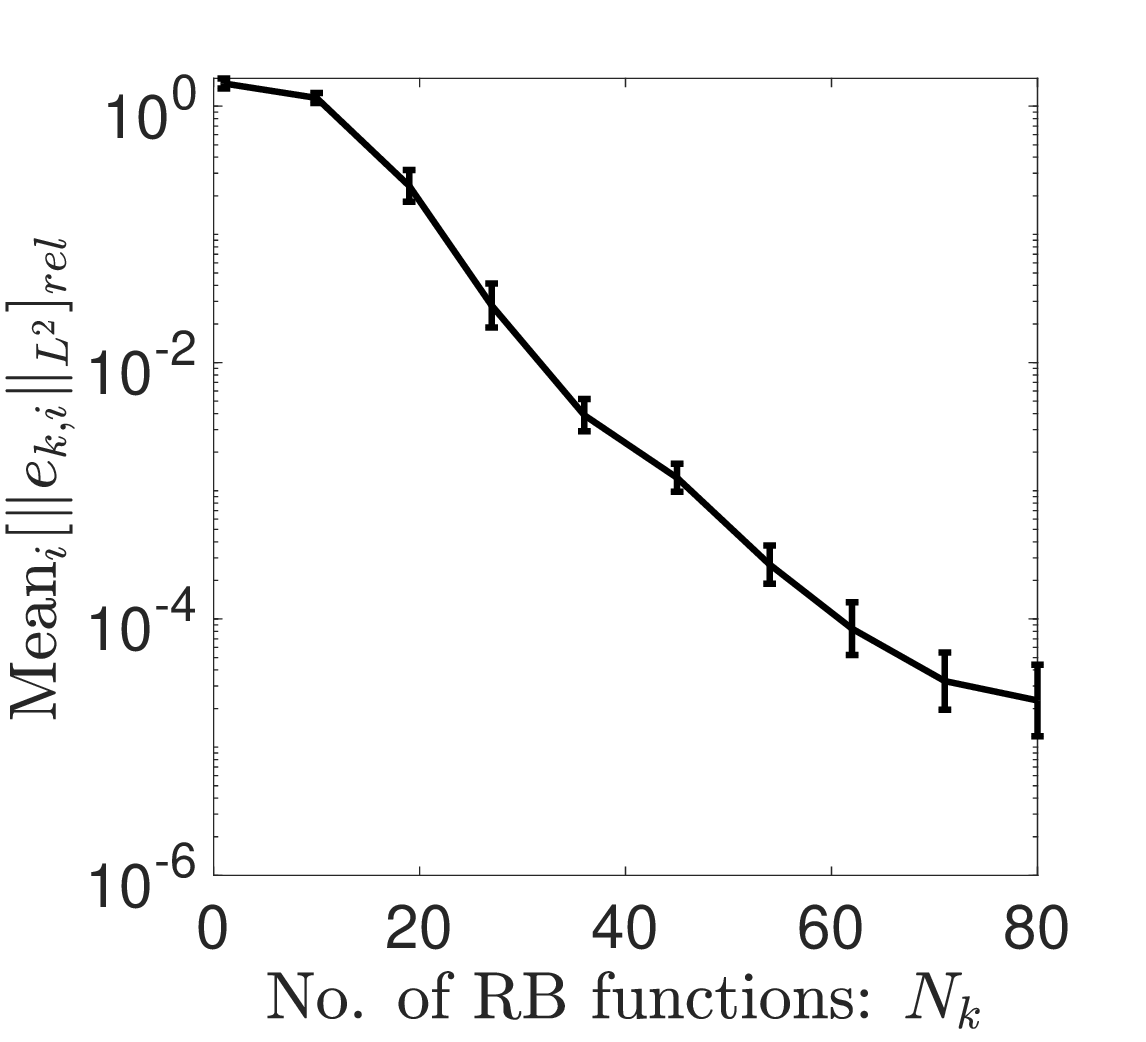}}
   \caption{ In (a), we plot the combined wall-clock time to construct the POD basis and approximating seismograms. Here, we have used 100 POD-basis functions to compute the approximation errors for decreasing time steps as $10^{-1},10^{-2}$ and $10^{-3}$. 
   The training set contains every second $\hat{s}$ specified in \cref{eq:a_p truncated}, resulting in a cardinality of 130. 
   The decay of the singular values is shown in (b) and the RB approximation error in the time domain seismograms is shown in (c).}
   \label{fig:Reduction_coarsePOD}
 \end{figure}

\cref{subfig:wall_clockNBM_FOM_ROM} shows a comparison of the wall-clock time for the RB approximation of the seismograms with the implicit Newmark-beta method. In this setting, we keep the dimension of the ROM, obtained using the Greedy algorithm, fixed at $N_k=150$ and consider time step sizes as $10^{-1},10^{-2}$ and $10^{-3}$. Here, we use more RB functions to avoid stagnation after $10^{-3}$ resulting from RB approximation errors. The reference solution is computed using the implicit Newmark-beta method with a step size of  $10^{-4}$. As noted in \cref{subfig:wall_clockNBM_FOM_ROM}, the time domain seismogram approximations for the ROM are not affected by the decrease of the time-step compared to square root growth in wall-clock time observed for the Newmark-beta method. However, the wall-clock time for the computation of wavefields at all time-steps using Weeks' method grows as 1/4. Here, the Clenshaw algorithm, which uses recursive computation of the Laguerre polynomials at all time instances, incurs the dominating computational cost. 

 \cref{subfig:Wall_clock_coarsePOD} demonstrates that by considering $\hat{s}$ from \cref{eq:a_p truncated} as the training parameter (\cref{rem:opti_para_week}), the combined wall-clock time for the offline and online phases is much smaller than for the implicit Newmark-beta method. 
 This approach is possible because the optimal parameters in $\hat{s}$ are computed only once (\cref{rem:opti_para_week}). We observe that this approach preserves the excellent approximation properties discussed earlier, as shown by the rapid decay of the singular values in \cref{subfig:POD_decay_scoarse} and the rapid decay in the time domain RB approximation errors in \cref{subfig:POD_errorTD_scoarse}. 
 
\subsection{Reduction for the \protect$ \mathcal{m}-$parameters}\label{sec:results_lameparameters}
 In this subsection, we apply the RB methods presented in \cref{sec:parametric reduction} to construct ROMs accommodating for variations in $\paramu=(\lambda,\mu)$ upto the maximum bounds given by $\lambda_{min},\lambda_{max},\mu_{min},\mu_{max}$.
 Here, we introduce a relative change in the reference values of $\paramu$ mentioned in \cref{fig:model} to find the maximum and minimum bounds of $\paramu$.
 In the following results, we consider the case $\alpha=1.0\pi$ for simplicity, while the results for increasing values of $\alpha$ can be understood in the context of frequency content as discussed in \cref{sec:reduction in the s parameter results}. 
 We use the notation $[\cdot]_{rel}={\cdot}/{\|\hat{\OoI}_k(\paramu)\|_{L^2} }$ for the true error or the a posteriori error estimator and $\hat{\OoI}_k(\paramu)$ denotes the time domain seismogram computed using $k$ RB functions. We compute the RB approximation errors using a reference solution obtained from Weeks' method utilizing full order solution. 
 In addition, we calculate $C_{\Week}=21.57$, as in \cref{eq:OoI estimate TD}, using the optimal parameters $(\wr,\wi)$ and the cut-off frequency $ s_{max}=11.74$ from the previous subsection. 

\begin{figure}[t!]
  \centering
   \subfloat[POD-Greedy algorithm]{\label{subfig:PODGreedy30Change}\includegraphics[width=.33\linewidth]{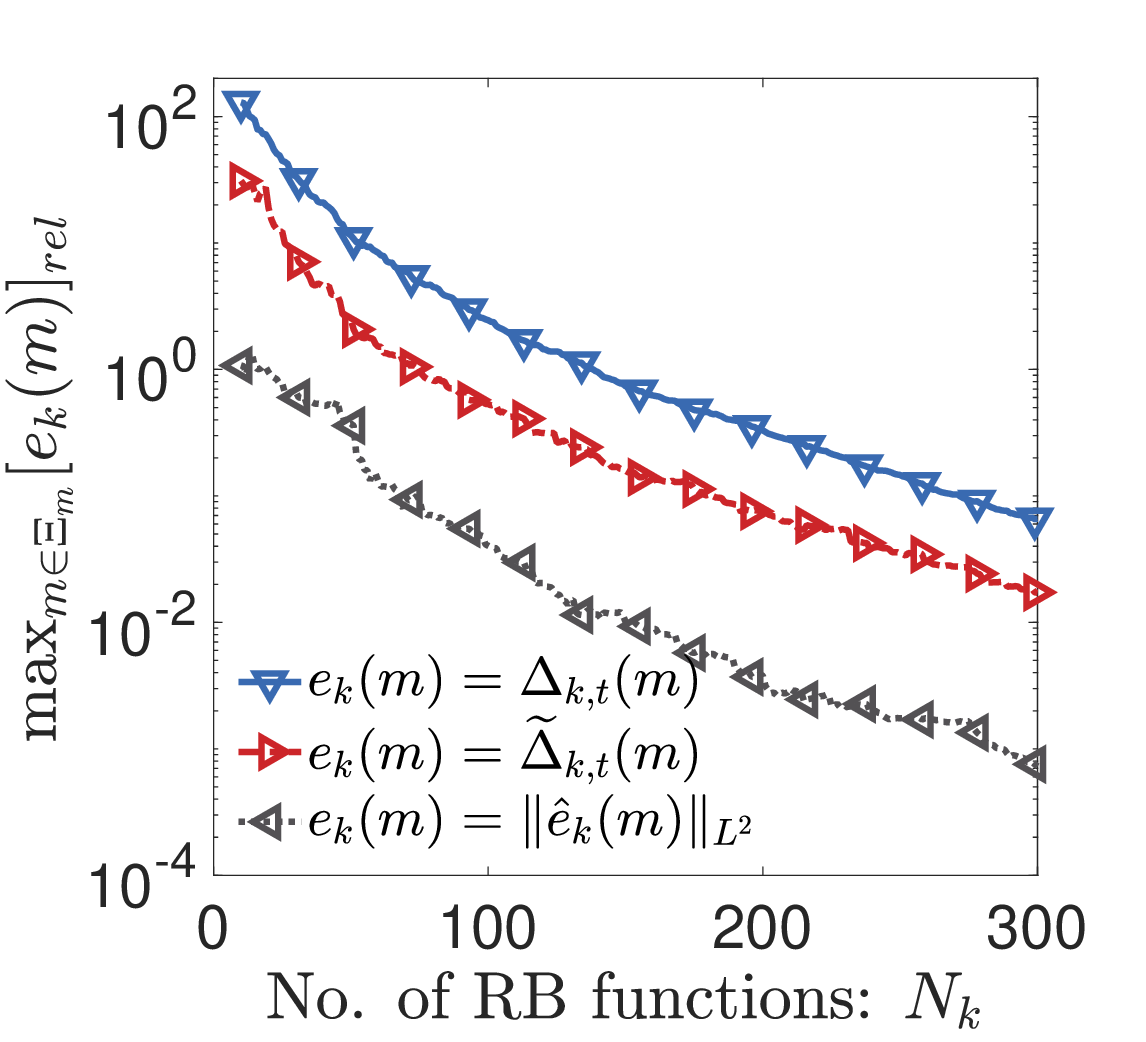}}
   \subfloat[Effectivity of the estimator]{\label{subfig:Effectivity30Change}\includegraphics[width=.33\linewidth]{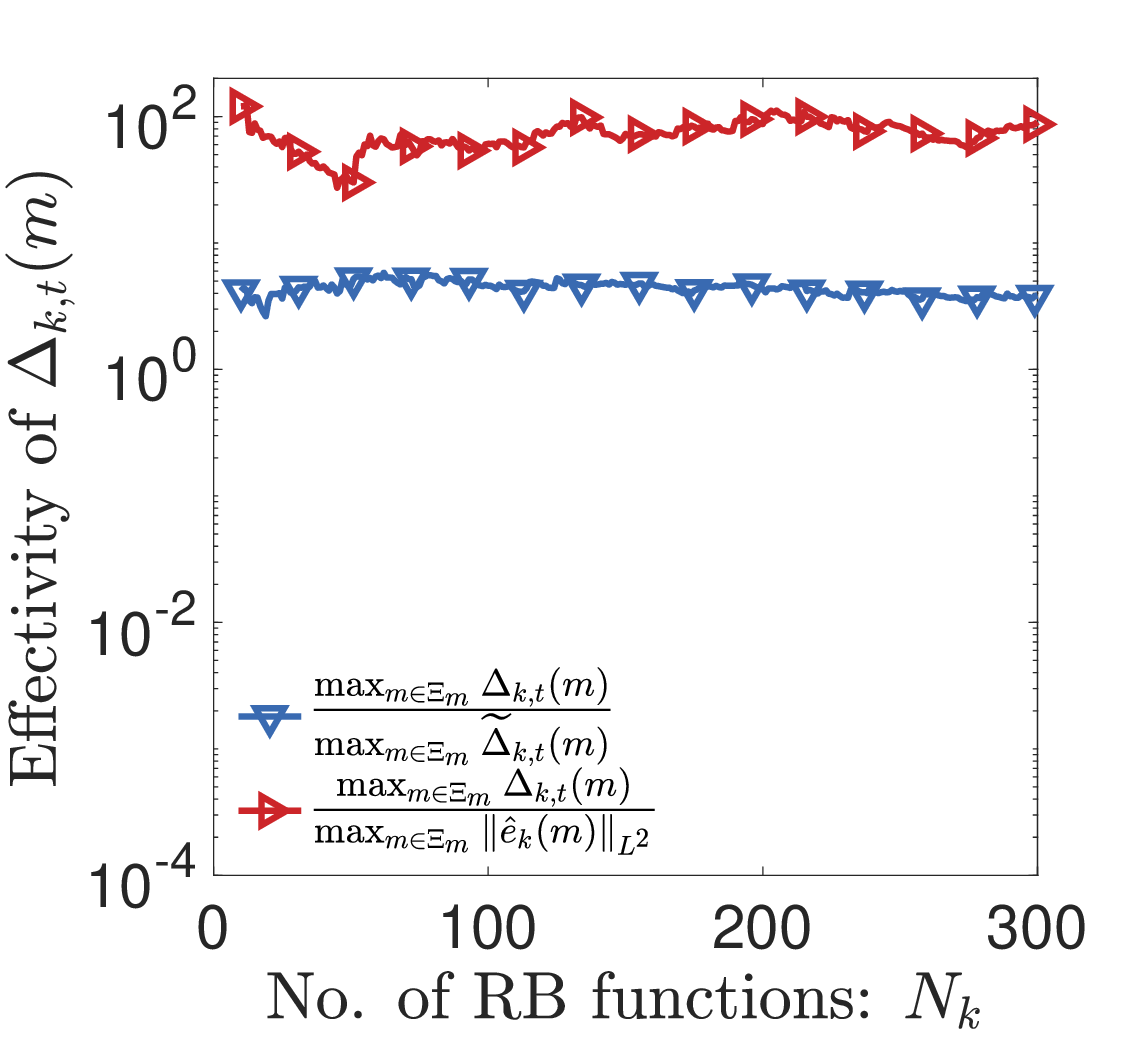}}
   \subfloat[Greedy algorithm]{\label{subfig:Greedyalgorithm30Change}\includegraphics[width=.33\linewidth]{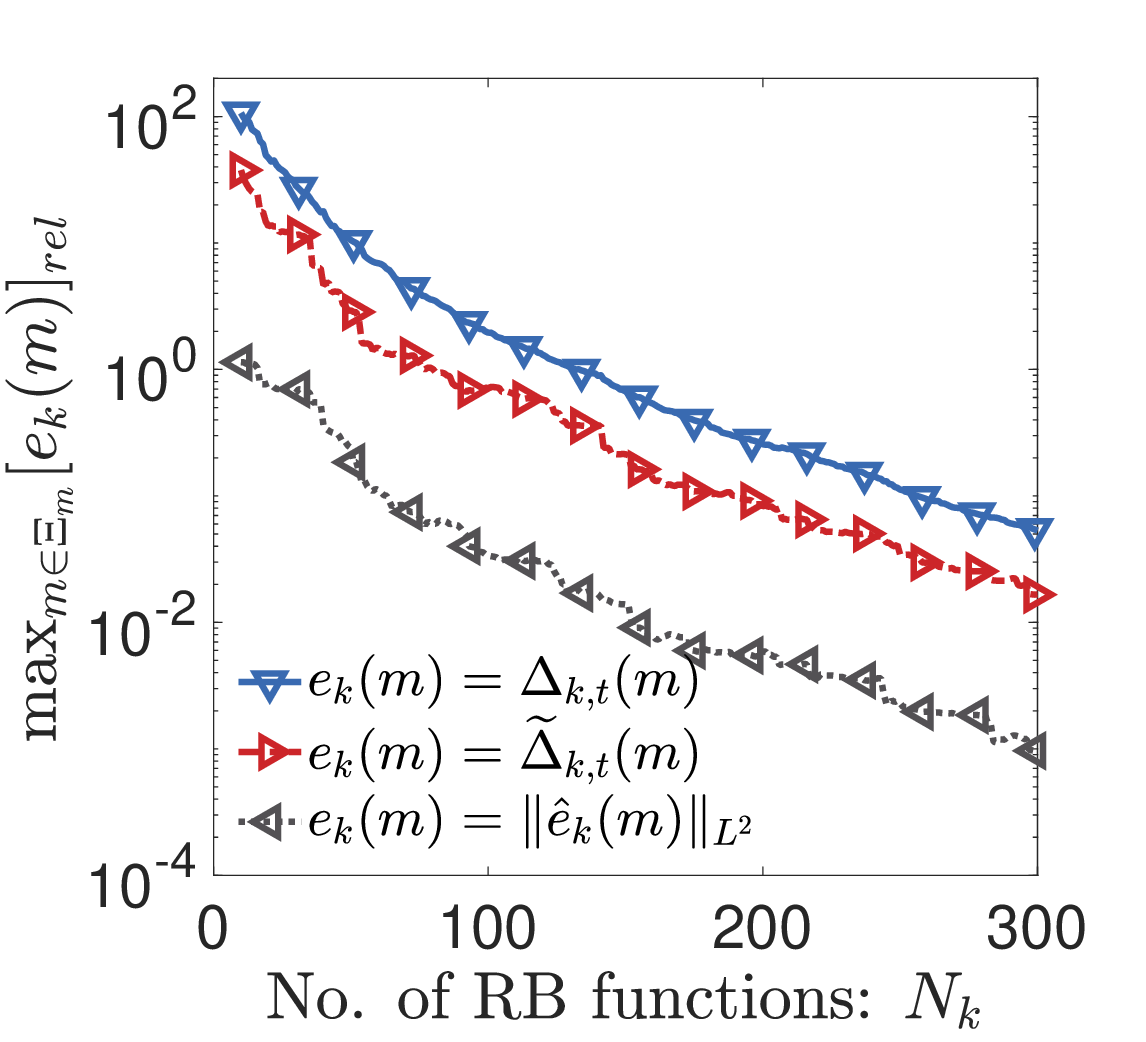}}  
   \caption{Test case 1: Construction of RB space with 30\% change in the material properties. Here, $\widetilde{\Delta}_{k,t}(\paramu):= C_{\Week} \sum_{j=1}^{N_z} |C_j| \|z_h(\hat{s}_j;\paramu) - z_h(\hat{s}_j;\paramu)\|_2$ is the upper bound for the true error $\hat{e}_k(\paramu)=\|\hat{\OoI}_h(\paramu) - \hat{\OoI}_k(\paramu)\|_{L^2}$. We scale $\Delta_{k,t}(\paramu)$ with $10^{-4}$ and the upper bound using $20^{-3}$.}
   \label{fig:Lame parameter RB construction 30Change}
 \end{figure}

\paragraph{Perturbations in $(\lambda,\mu)$} 
In the first test case, we perturb the material properties by a small amount, cf., \cite[Sec. 4.7]{hawkins2023model}. Similar perturbations occur, e.g., when solving an inverse problem to update the earth model during the optimization iterations \cite[Sec. 4.7]{hawkins2023model}. To this end, we introduce a 30$\%$ change in the material properties to establish the relative bounds $\lambda_{min}=0.70\lambda,\lambda_{max}=1.30\lambda,\mu_{min}=0.70\mu,$ and $\mu_{max}=1.30\mu$. To construct a training set $\Xi_m$, we draw 512 independent uniform random perturbations $\delta_{\lambda}^i\in[0.70,1.30]$ and $\delta_{\mu}^i\in[0.70,1.30]$ to select training samples $(\lambda^i_m,\mu^i_m)\in\Xi_m$ such that $\lambda^i_m=\delta^i_\lambda \lambda$ and $\mu^i_m=\delta^i_\mu \mu$ for $i=1,\ldots,512$.

In \cref{subfig:PODGreedy30Change}, we observe a rapid convergence of the POD-Greedy algorithm; here the algorithm terminates either if a maximum number of 300 RB functions or a tolerance of $10^{-1}$ is reached.
The moderate overestimation in $\Delta_{k,t}(\paramu)$ compared to \cref{subfig:Greedy_fixedm_Est} is due to the upper bound of the true error $\|\hat{\OoI}_h(\paramu) - \hat{\OoI}_k(\paramu)\|_{L^2}$. In \cref{subfig:Effectivity30Change}, we observe that the upper bound also converges at a very similar rate to the true error in the seismograms and helps to maintain an approximately constant effectivity of the a posteriori error estimator for an increasing number of RB functions. Comparing \cref{subfig:PODGreedy30Change,subfig:Greedyalgorithm30Change}, we observe that the POD-Greedy and Greedy algorithms converge rapidly. Since we construct the RB space using low-frequency domain solutions, both methods end up exploring almost similar subspaces of the full solution space, yielding a very similar rapid error decay analogous to the behaviour observed in \cref{sec:reduction in the s parameter results} for the POD and the Greedy algorithm.

In \cref{subfig:TestPGandG30}, we observe that the RB approximations obtained by both algorithms converge at nearly the same rate for a uniform random test set $\Xi_t$  of cardinality 128. Furthermore, in \cref{subfig:TestSeismoH30,subfig:TestSeismoV30}, we observe an accuracy of approximately $10^{-3}$  when using 300 RB functions to approximate time domain seismograms over a uniform random test set of size 128. Here, the reference solution is calculated with the implicit Newmark-beta method. This also confirms the discussion in \cref{sec:parameter selection Week}, that the optimal parameters computed for a reference $\paramu\in\Pspace$ can be re-used while ensuring a good approximation of the time domain seismograms.

In \cref{subfig:Lame_1030change}, we observe that by reducing the complexity of the problem to a $10\%$ relative change, we obtain an $L^2(0,20;\mathds{R}^2)-$norm error of $10^{-3}$ with 100 RB functions less than which were needed in the $30\%$ change. Here, we have used a uniform random training set of cardinality 512 with $10\%$ relative change in the minimum and maximum bounds for $\paramu$. In \cref{subfig:Test10change}, we observe that the RB approximation error converges with the same rate for a uniform random test set of cardinality 128. 

\begin{figure}[t!]
  \centering
   \subfloat[Online phase]{\label{subfig:TestPGandG30}\includegraphics[width=.33\linewidth]{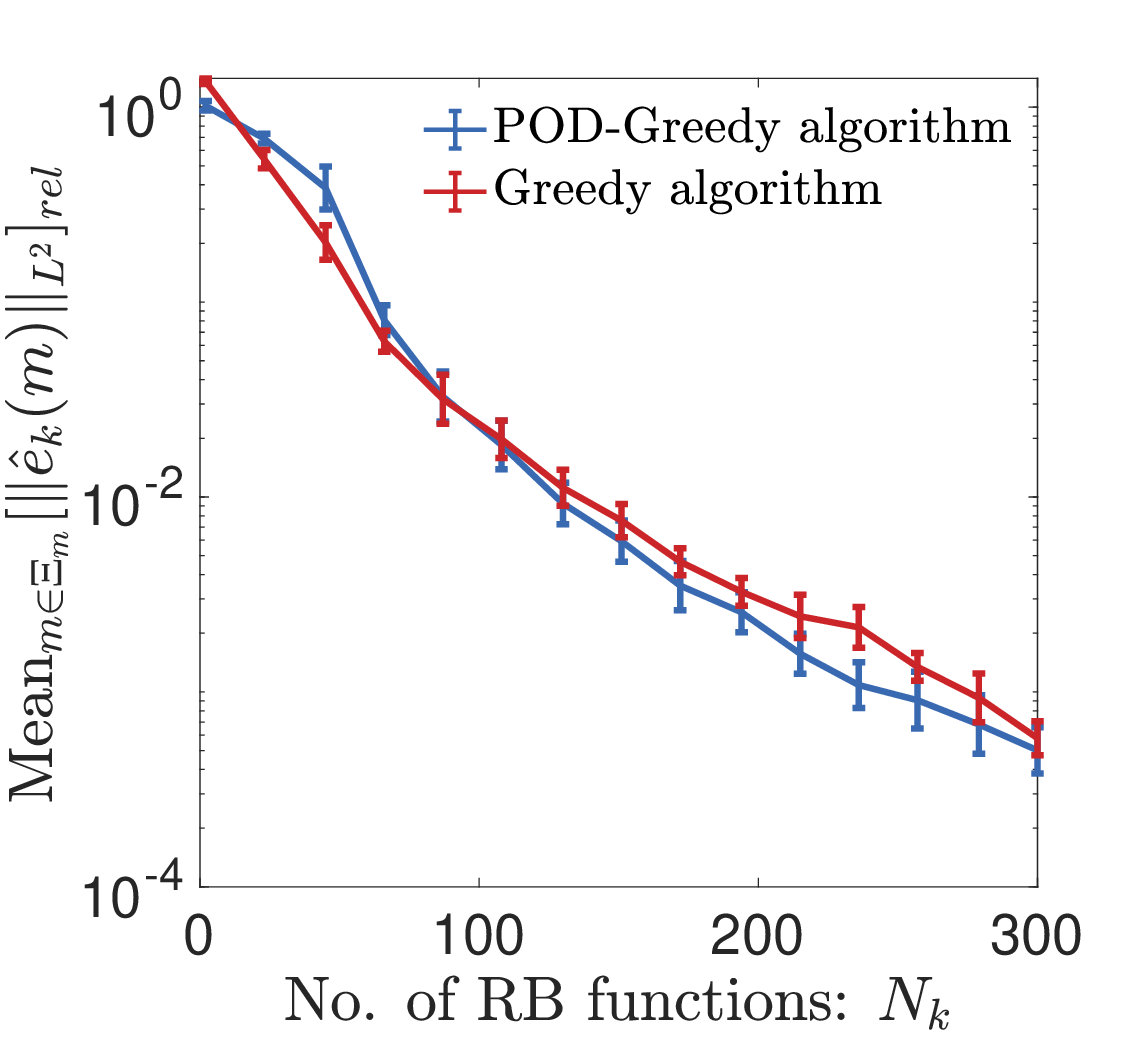}}  
   \subfloat[Horizontal component]{\label{subfig:TestSeismoH30}\includegraphics[width=.33\linewidth]{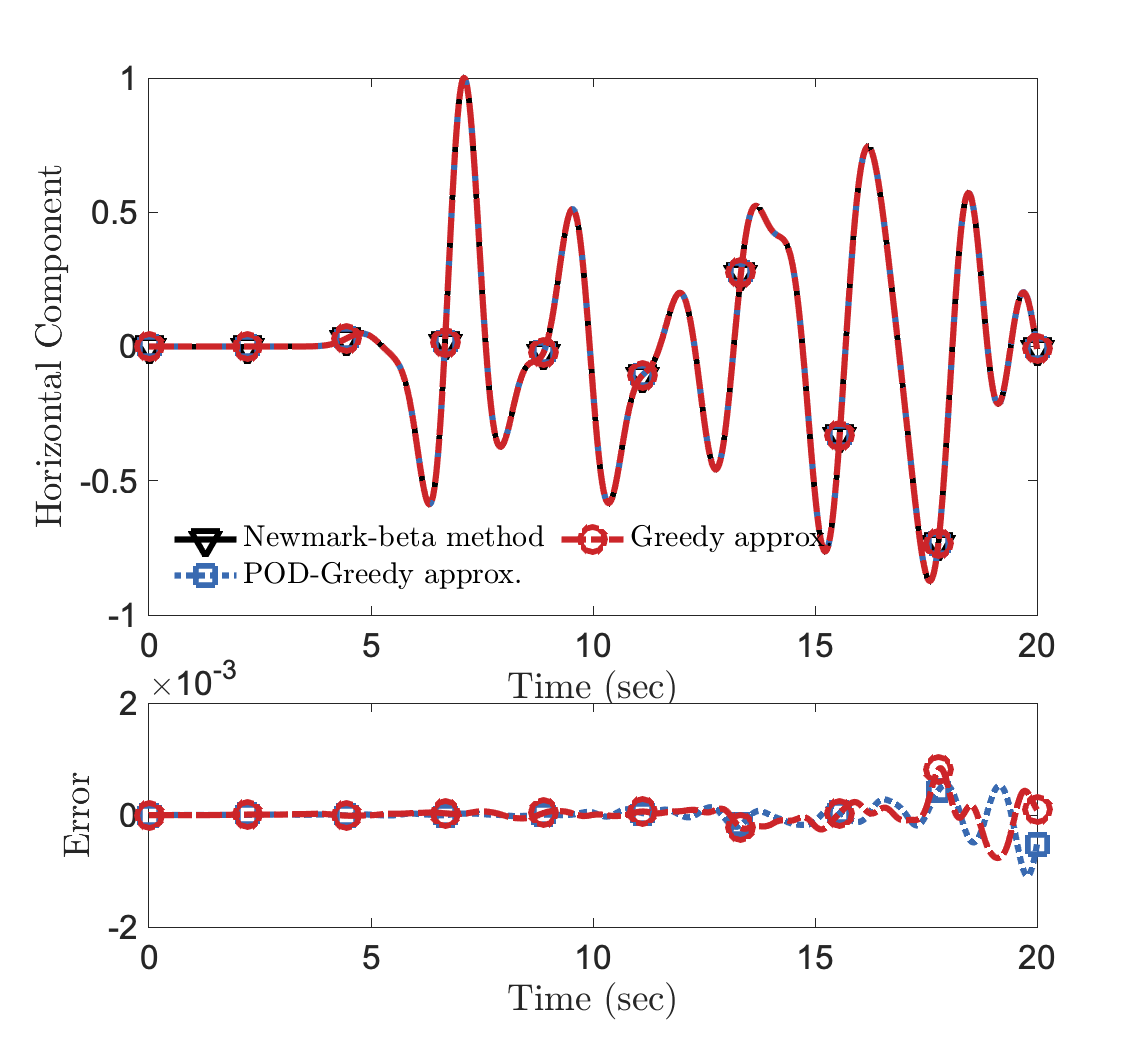}}
   \subfloat[Vertical component]{\label{subfig:TestSeismoV30}\includegraphics[width=.33\linewidth]{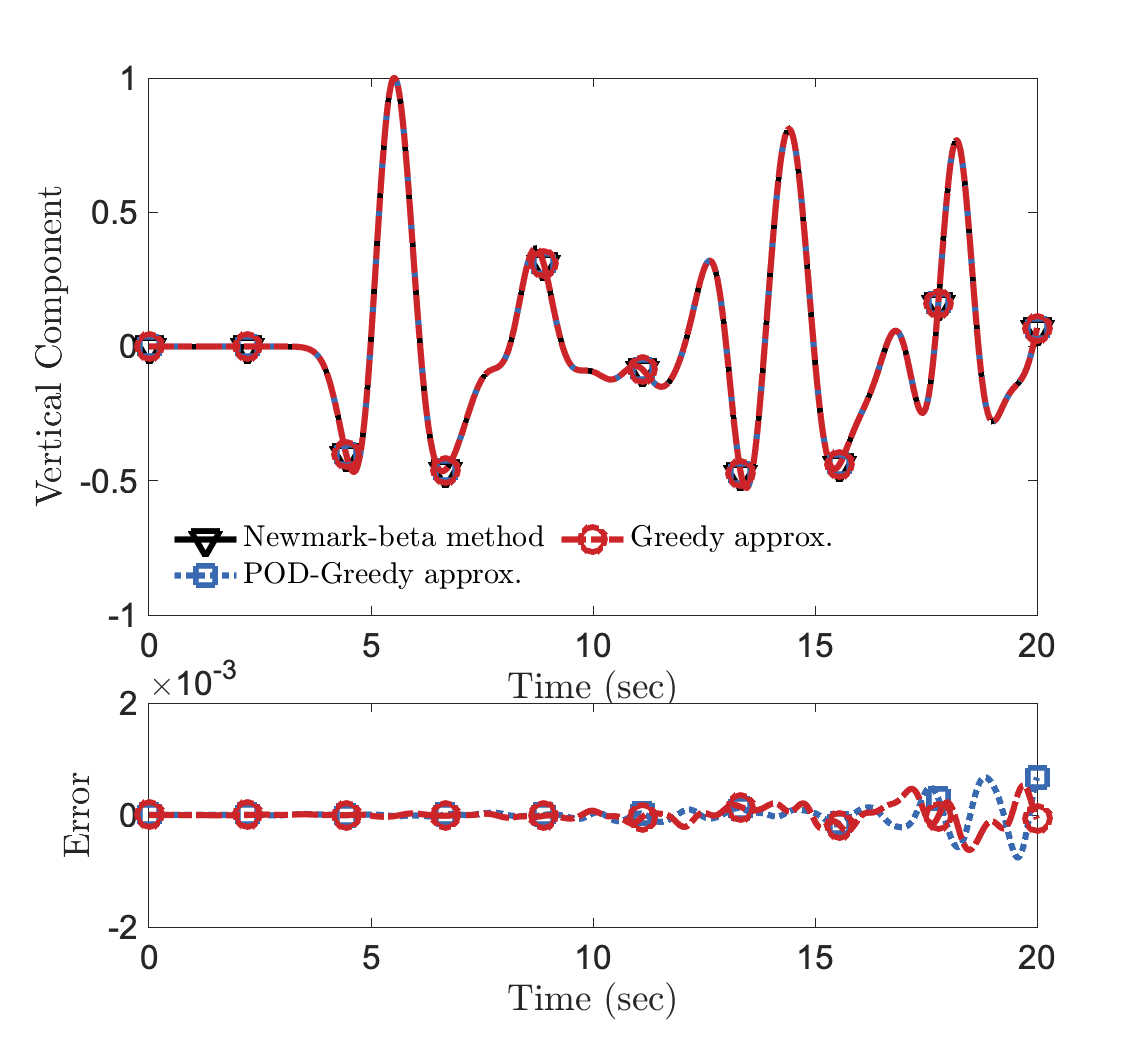}}
   \caption{Test case 1: (a) Comparison of relative $L^2(0,20;\mathds{R}^2)$ error norm convergence of the RB approximation of the seismograms over a test set $\Xi_t$ with 30\% change in the material properties. In (a), we have used full order solution as a reference to compute the mean error over $\paramu\in \Xi_t$, while in (b) and (c), we have used the implicit Newmark-beta method as a reference solution.}
   \label{fig:Lame parameter RB test 30Change}
 \end{figure}

 \paragraph{Independent variations in the layers} 
 In the second test case, we introduce a uniform random variation that operates independently in each layer which is understood in the sense of piecewise constant perturbation per layer. Here, we use the same the uniform random piecewise constant perturbations for $\lambda\in [\lambda_{min},\lambda_{max}]$ and $\mu\in [\mu_{min},\mu_{max}]$ to avoid having a large set of parameters for the construction of the RB spaces. We introduce a $5\%$ and $10\%$ change to establish the upper and lower bounds for the variation in the material properties. 
 \begin{figure}[t!]
  \centering
   \subfloat[Case 1]{\label{subfig:Lame_1030change}\includegraphics[width=.33\linewidth,trim={0.6cm 0cm 19.1cm 0cm},clip]{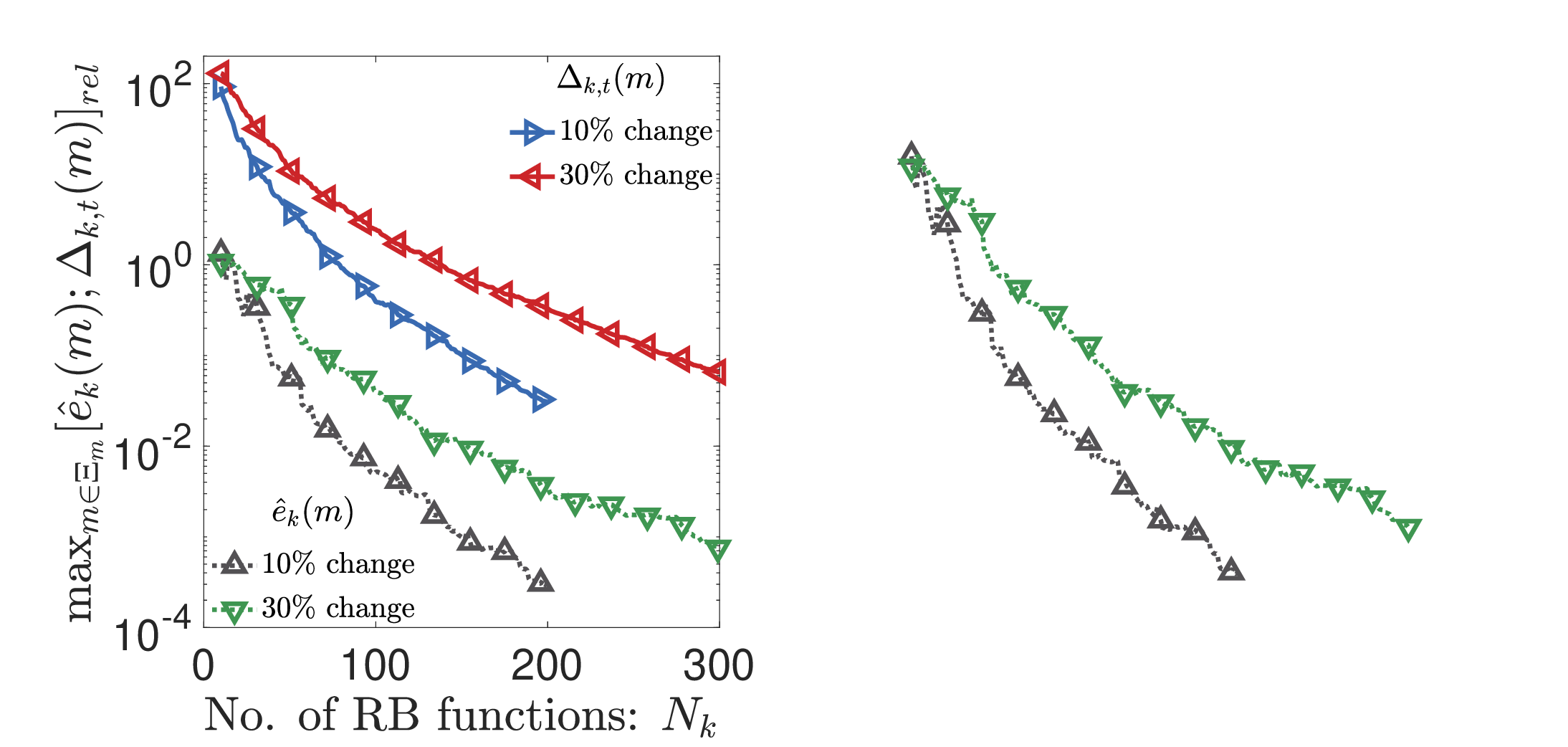}}
   \subfloat[Case 2]{\label{subfig:5and10allchange}\includegraphics[width=.33\linewidth,trim={0.5cm 0cm 19.1cm 0cm},clip]{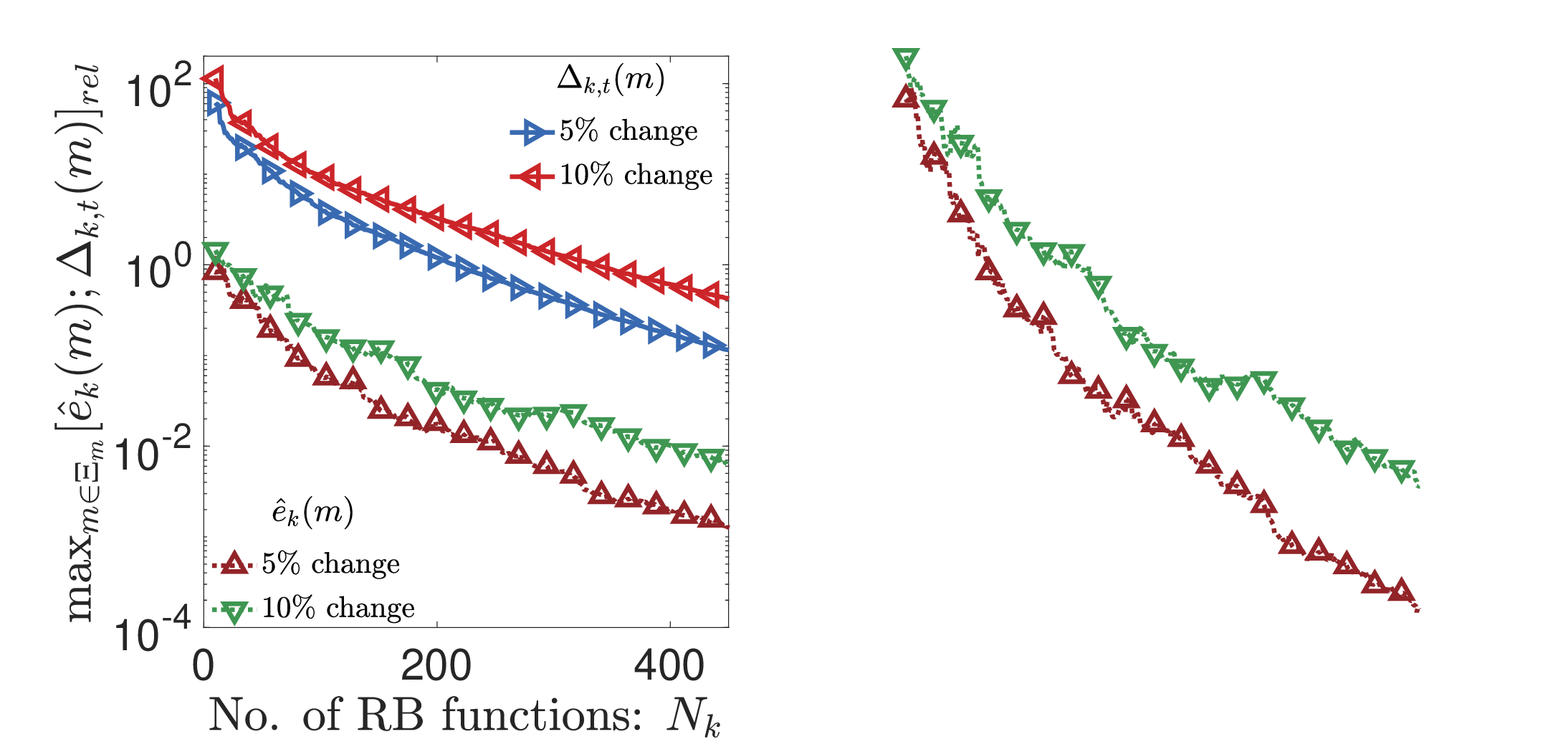}}
   \subfloat[Online phase]{\label{subfig:Test10change}\includegraphics[width=.35\linewidth]{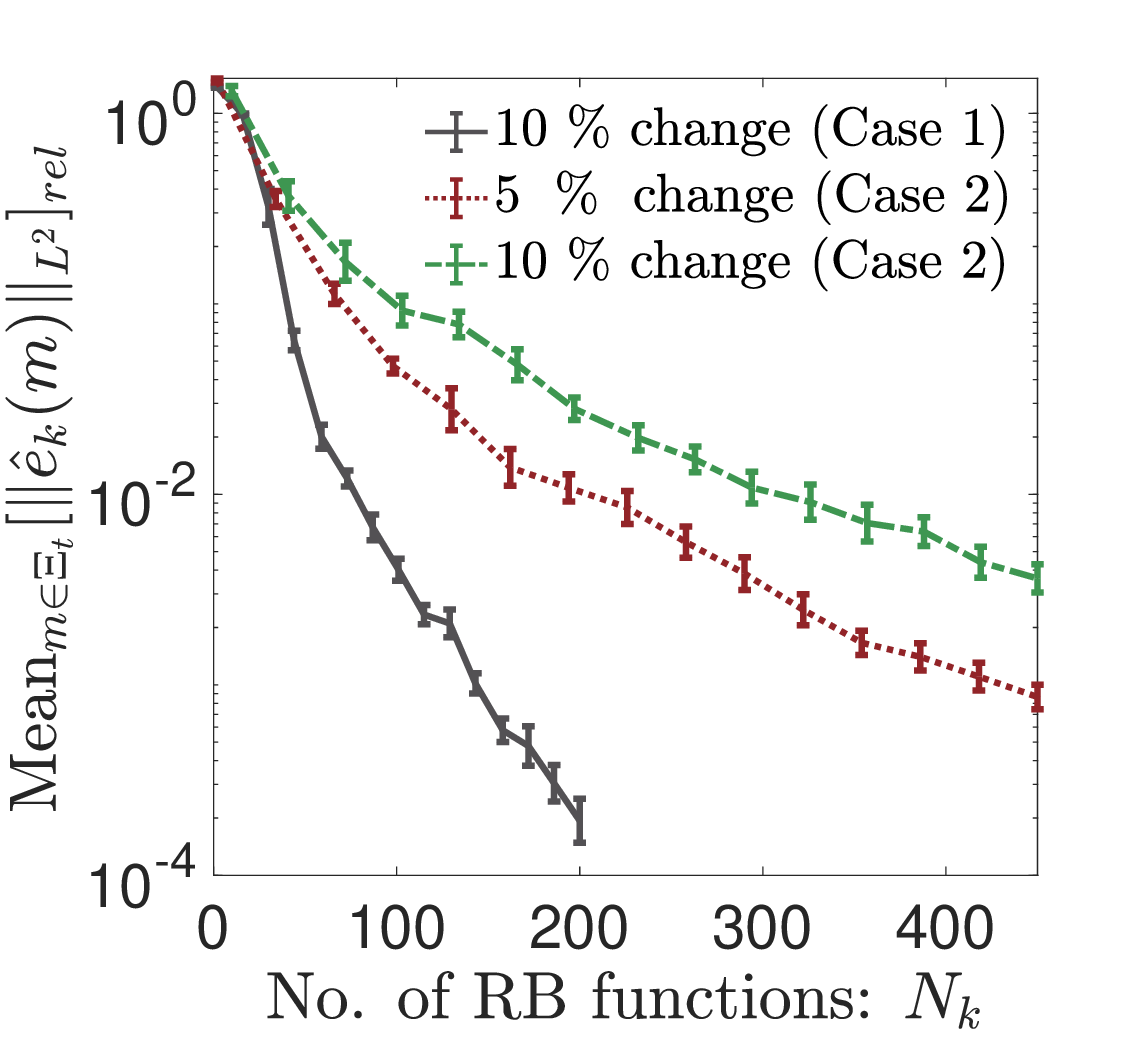}}
   \caption{ In (a) we illustrate the impact of variation in the material properties on the dimension of the ROM. In (b) we show the convergence of the POD-Greedy algorithm when all layers undergo an independent variation and its mean error over a test set in (c).  We scale the estimate with $10^{-4}$. }
   \label{fig:diffchanges}
 \end{figure}
 
 Subsequently, we construct a training set $\Xi_m$ of cardinality 512 and a test set $\Xi_t$ of cardinality 128 using a uniform random selection within the respective bounds of the Lam\'e parameters for both cases. In \cref{subfig:5and10allchange,subfig:Test10change}, we observe a very rapid convergence of the POD-Greedy algorithm and a similar rapid convergence in the mean errors over the corresponding uniform random test set. Compared to the finite element problem of dimension $N_h=233,318$, we can maintain an impressive accuracy of approximately $10^{-3}$ and $10^{-2}$ with only 450 RB functions in the cases of $5\%$ and $10\%$ changes, respectively.

\section{Conclusion}\label{sec:conclusions}
We have presented a goal-oriented model order reduction approach relying on RB methods to expedite simulations of the seismic wave propagation problem needed for many different parameters. 
We exploit the fact that the observed output of interest - seismograms - is low-pass filtered, and thus, we restrict ourselves to the low frequency region of the Laplace-transformed problem. 
Therefore, we obtain a stable ROM with provably exponential convergence of the RB approximation errors for a fixed subsurface model (cf., \cref{prop:infsup cont.,prop:kolmogorov}).
In our numerical results for a 2D realistic Groningen subsurface model, we observe exponential and rapid convergence of the RB approximation error in the time domain seismograms, even when wave speeds are relatively varied. 
 The a posteriori error estimator, which estimates the $L^2-$norm error in the time domain seismograms and drives the (POD)-Greedy algorithm in the parametric case, decays with approximately the same rate as the actual error (cf., \cref{subfig:Effectivity30Change}). 
We note (\cref{fig:Fixed_m_offlinephase} and \cref{fig:Seismo_fixedm}) that by increasing the frequency content of the Ricker wavelet, we need more RB functions to reach the desired tolerance. 
We thus conjecture that the convergence rates for approximating solutions - up to the desired accuracy - gets (significantly) worse in the high frequency regime. 
We also highlight that by using many processors, the wall-clock time to construct the ROMs for a fixed subsurface model and approximating the seismograms is significantly smaller than solving the FOM via an implicit Newmark-beta method; see, 
\cref{fig:wallclock_time,fig:Reduction_coarsePOD}. 
In the future work, we will explore absorbing boundary conditions to handle reflections from the boundaries and localized model order reduction \cite{buhr2localized} for large-scale 3D problems. 

\section*{Acknowledgments}
The authors thank Professor Jeannot Trampert for 
insightful discussions on the seismological applications. 
\FloatBarrier

\bibliographystyle{siamplain}

\end{document}


\maketitle

\section{Well-posedness}\label{supplementary:Wellposedness}
To show well-posedness of the weak formulation \cref{eq: Weak formulation Laplace domain} let us state a preliminary result. First, consider the following eigenvalue problem:
Find $ (v(\paramu),\tau(\paramu))\in \U\times \RR$ such that,
\begin{equation}
    b(v(\paramu),{\ldw};\paramu) = \tau(\paramu)\, \big(v(\paramu),\ldw\big)_{\U(\paramu)} \quad \forall \;\ldw\in \U. \label{eq:Stability bilinear proposition EVP coercive case cont.}
\end{equation}
We assume the normalization $\|v(\paramu)\|_{\U(m)}=1$.
The next result establishes that the eigenvalues $\tau(\paramu)$ are bounded uniformly in $\paramu$. For its proof, let us recall the coercivity of the bilinear form in the weighted $\|w\|_{H^1_{\rho}}^2:=\|\rho^{1/2} w\|_{L^2}^2+\|\nabla w\|_{L^2}^2$ norm, which follows from Korn's inequality \cite{horgan1995korn},
\begin{equation}\label{eq:Korn}
b(w,w;\paramu) \geq \CK\|w\|_{H^1_{\rho}}^2\quad\text{for all } w\in \U,
\end{equation}
for some $\CK>0$, which depends on $\Omega$, $\Gamma_D$ and the lower bound $\mu_0$ in \cref{eq:Lame_bounds}.

\begin{proposition}\label{prop:lower_lambda_0}
For any eigenvalue $\tau(\paramu)$ of \cref{eq:Stability bilinear proposition EVP coercive case cont.} it holds that
\begin{equation}
    \tau_0\leq \tau(\paramu) \leq  1\qquad \text{ for } \tau_0=\frac{\CK}{1+\CK}>0.
    \label{eq:Lambda0 constant}
\end{equation}
\end{proposition}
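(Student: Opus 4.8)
The plan is to exploit the symmetry of both $b(\cdot,\cdot;\paramu)$ and the inner product $(\cdot,\cdot)_{\U(\paramu)}$: \cref{eq:Stability bilinear proposition EVP coercive case cont.} is then a symmetric generalised eigenvalue problem, so that $\tau(\paramu)\in\RR$ and, testing the equation with $\ldw=v(\paramu)$ and using the normalization $\|v(\paramu)\|_{\U(\paramu)}=1$, the eigenvalue coincides with the Rayleigh quotient
\begin{equation*}
\tau(\paramu)=\frac{b(v(\paramu),v(\paramu);\paramu)}{\|v(\paramu)\|_{\U(\paramu)}^2}=b(v(\paramu),v(\paramu);\paramu).
\end{equation*}
Abbreviating $v=v(\paramu)$ and recalling that, by definition, $\|w\|_{\U(\paramu)}^2=\|\rho^{1/2}w\|_{L^2}^2+b(w,w;\paramu)$, I would set $c:=\|\rho^{1/2}v\|_{L^2}^2\ge 0$ and $\beta:=b(v,v;\paramu)\ge 0$. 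Then $\tau(\paramu)=\beta$, while the normalization reads $c+\beta=1$, so the whole proposition reduces to two elementary bounds on $\beta$.

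The upper bound is then immediate: $\tau(\paramu)=\beta=1-c\le 1$, since $c\ge 0$.

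For the lower bound, I would invoke Korn's inequality \cref{eq:Korn}, which gives $\beta=b(v,v;\paramu)\ge\CK\|v\|_{H^1_{\rho}}^2\ge\CK\|\rho^{1/2}v\|_{L^2}^2=\CK c$; in particular $\beta>0$, since $\beta=0$ would force $v=0$ and contradict $\|v\|_{\U(\paramu)}=1$. Hence $c\le\beta/\CK$, so that $1=c+\beta\le(1+\CK)\beta/\CK$, i.e. $\tau(\paramu)=\beta\ge\CK/(1+\CK)=\tau_0>0$, which is the claimed bound.

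I do not expect a genuine obstacle here; the argument is just the monotonicity of $t\mapsto t/(c+t)$ combined with coercivity. The one point that deserves attention is that the whole estimate must be independent of $\paramu$, and this is guaranteed precisely by the fact recorded right after \cref{eq:Korn}, namely that $\CK$ depends only on $\Omega$, $\Gamma_D$ and the lower Lamé bound $\mu_0$, and not on the particular value of $\paramu$; hence both $\tau_0$ and the upper bound $1$ are uniform in $\paramu$.
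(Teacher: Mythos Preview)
Your proof is correct and follows essentially the same line as the paper's: test \cref{eq:Stability bilinear proposition EVP coercive case cont.} with $\ldw=v(\paramu)$, identify $\tau(\paramu)=b(v,v;\paramu)$ via the normalization, and combine Korn's inequality \cref{eq:Korn} with the trivial bound $(\rho v,v)\le\|v\|_{H^1_\rho}^2$ to obtain $\tau_0\le\tau(\paramu)\le 1$. The only cosmetic difference is that the paper first rearranges to $b(v,v;\paramu)=\frac{\tau}{1-\tau}(\rho v,v)$ (which requires the harmless case split $\tau<1$), whereas your additive bookkeeping $c+\beta=1$ avoids that split; the underlying estimate is identical.
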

\begin{proof}
The upper bound $\tau(\paramu)\leq 1$ follows directly from the definition of $(\cdot,\cdot)_{\U(\paramu)}$ inner product 
and \eqref{eq:Stability bilinear proposition EVP coercive case cont.} with $w=v(\paramu)$. To prove the other bound suppose $\tau(\paramu)<1$. Using 
the definition of $(\cdot,\cdot)_{\U(\paramu)}$ 
and rearranging \eqref{eq:Stability bilinear proposition EVP coercive case cont.} we obtain that
\begin{equation}\label{eq:lambda_0_bound_1}
    b(v(\paramu),v(\paramu);\paramu) = \frac{\tau(\paramu)}{1 - \tau(\paramu)}\, (\rho v(\paramu),v\big(\paramu)).
\end{equation}
Using \eqref{eq:Korn} and $(\rho v(\paramu),v\big(\paramu))\leq \|v(\paramu)\|_{H_{\rho}^1}^2$, we obtain
$\CK \leq \frac{\tau(\paramu)}{1 - \tau(\paramu)}$, from which we conclude the proposition.
\end{proof}
\subsection{Proof of Proposition 3.1}
\begin{proof}
Using the Riesz representation theorem, we can define the supremizer ${T}(s;\paramu):\U \rightarrow \U $ by
\begin{equation}
    \big({T}(s;\paramu)\ldu,\ldw  \big)_{\U(\paramu)} = B(\ldu,\ldw;s;\paramu),\quad \forall \ldu,\ldw\in \U.\label{eq:def supremizer cont.}
\end{equation}
Denote $v(\paramu)$ an eigenfunction to the eigenvalue problem \cref{eq:Stability bilinear proposition EVP coercive case cont.} with eigenvalue $\tau(\paramu)$. Using the definitions of $T$ and $B$ we obtain that 
\begin{equation*}
    \big({T}(s;\paramu)v(\paramu),\ldw  \big)_{\U(\paramu)}=B(v(\paramu),\ldw;s;\paramu) 
    =\big(s^2(1-\tau(\paramu))+\tau(\paramu)\big) (v(\paramu),\ldw)_{\U(\paramu)}
\end{equation*}
for all $\ldw\in \U$. Therefore, $v(\paramu)$ is an eigenfunction of ${T}(s;\paramu)$ with eigenvalue 
$\chi(s;{\paramu}) := s^2(1-\tau(\paramu) ) + \tau(\paramu)$. Since the eigenfunctions to \cref{eq:Stability bilinear proposition EVP coercive case cont.} are complete in $\U$, it remains to bound $|\chi(s;\paramu)|$ from below.
Setting $c_R={\rm Re}(s^2)=\reals^2-\imags  ^2$ and $c_I={\rm Im}(s^2)=2\reals \imags  $, we have that
\begin{equation}
    |\chi(s;{\paramu})|^2 = \big((c_R-1)^2+c_I^2\big)\tau(\paramu)^2 +2\big(c_R(1-c_R)-c_I^2 \big)\tau(\paramu) + \big(c_R^2 +c_I^2\big).
\end{equation}
with $\tau_0\leq \tau(\paramu)\leq 1$ by Proposition~\ref{prop:lower_lambda_0}.
If $(c_R-1)^2+c_I^2=0$, then $|\chi(s;{\paramu})|^2=1$.
Otherwise, $(c_R-1)^2+c_I^2>0$, and $|\chi(s;{\paramu})|^2$ is a quadratic polynomial in $\tau$,
which attains its minimum in $\tau_*(s)$ defined in \cref{eq:betaLB cases continous} 
if $\tau_0\leq\tau_*(s)\leq 1$, or in $\tau=\tau_0$ if $\tau_*(s)<\tau_0$, or in $\tau=1$ if $\tau_*(s)>1$.
%
Hence, we obtain \cref{eq:transposed infsup cont.} 
with $\beta(s)$ defined in \cref{eq:betaLB cases continous}.

To investigate the value of $\beta(s)$ for $s=\reals +\imagI \imags  \in I_s$, we note that replacing $\tau_0$ by $0$ in \cref{eq:betaLB cases continous}
gives a lower bound.
For $\tau_*(s)\geq 1$ we have $\beta(s)=1$, and for $\tau_*(s)\leq0$, we have $\beta(s)\geq |s^2|=|s|^2\geq \reals ^2$.
Consider $\tau_*(s)\in (0,1)$. We investigate the behavior of $\beta(s)$ as a function of $\imags  $.
Calculus shows that a necessary condition for a local minimum is $\imags  =0$.
For $\imags  =0$ we obtain that $\tau_*(s)=-\reals ^2/(1-\reals ^2)$, which is positive only if $\reals >1$, but then $\tau_*(s)>1$.
Hence, $\beta(s)$ becomes minimal for $s_*=\reals +\imagI s_{max}$. For $s_{max}$ large, we have the asymptotic behavior $\beta_m(s_*)=|s_*^2(1-\tau_*(s_*))+\tau_*(s_*)|\approx 1/(1+(s_{max}/(2\reals ))^2)^{1/2}>0$.
Summarizing, we may choose $\beta_*=\min\{1, \reals ^2,\beta_m(s_*)/2\}$.

The other inf-sup condition in \cref{eq:transposed infsup cont.}
follows from the observation that the adjoint of the supremizer satisfies $T(s;\paramu)^*=T(\bar s;\paramu)$ and $|\chi(\bar s;\paramu)|=|\chi(s;\paramu)|$.
\end{proof}
\section{Analyticity of Ricker wavelet}\label{supplementary:Rickeranalytic}
 Laplace transforms are analytic in the region of absolute convergence, and we have for some fixed $C>0$ and $0<\xi\leq \alpha$
\begin{equation}
    |\mathcal{q}(t)|=|(1-\alpha^2(t-t_0)^2/2)|e^{-\alpha^2(t-t_0)^2/4}\leq Ce^{-\xi^2(t-t_0)^2/4}.
\end{equation}
Then, it is straightforward to check that 
\begin{equation}
    \int_0^{\infty} |\mathcal{q}(t)e^{-(\reals +\imagI \imags  )t}|dt \leq C \int_0^{\infty} e^{-(\xi^2(t-t_0)^2/4 + \reals  t)}dt <\infty,
\end{equation}
converges absolutely for all $s\in \CC$, and the Laplace transform of $\mathcal{q}(t)$ is analytic for all $s\in \CC$. 
\section{Supporting numerical result}
In the following section, we present numerical results obtained using the model problem \cref{sec:MODEL}. 
\subsection{Discrete inf-sup constant and its lower bound} \label{supplementary:infsup_plots}
In \cref{fig:discreteinfsup}, we observe that the derived lower-bound for the inf-sup constant in \cref{prop:infsup cont.} is sharp and very simple to evaluate. Here, we use the model problem from \cref{sec:Model problem numerical results} with $\paramu\in\Pspace$ fixed at values from \cref{fig:model}. The discrete counter part of the inf-sup constant in \cref{eq:transposed infsup cont.}, denoted as $\beta_h(s)$ is computed by the solution of an eigenvalue problem (cf., \cite[Sec. 2.4.6]{quarteroni2015reduced} for details). We compute $\beta(s)$ from \cref{eq:betaLB cases continous} using $\tau_0$ as in \cref{eq:Lambda0 constant}, where $\CK$ is obtained by solving the eigenvalue problem in \cref{eq:Stability bilinear proposition EVP coercive case cont.}.
\begin{figure}[h!]
\centering
\subfloat[$\beta_h(s)$]{\label{subfig:betah}\includegraphics[width=.33\linewidth,trim={00cm 0cm 0.0cm 0cm},clip]{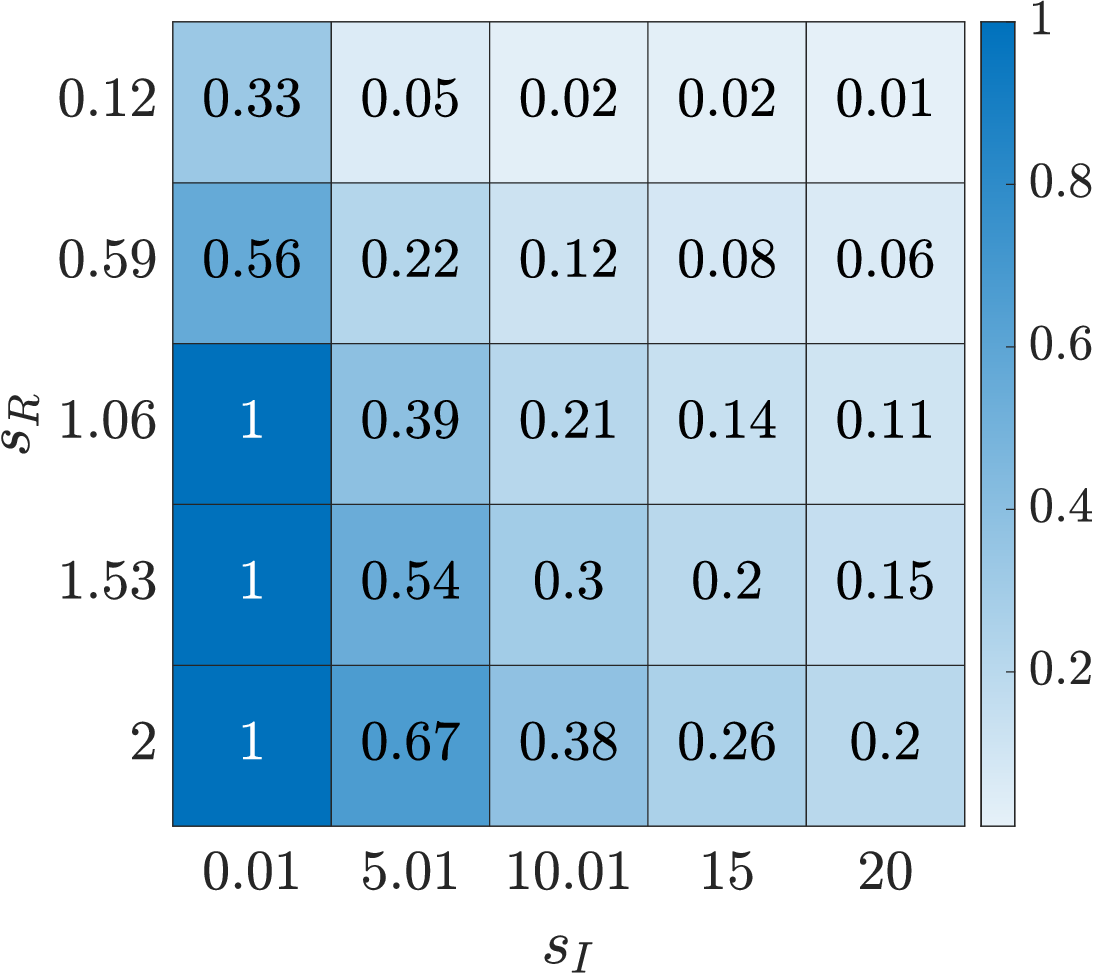}}
\subfloat[$\beta(s)$]{\label{subfig:betas}\includegraphics[width=.33\linewidth,trim={00cm 0cm 0.0cm 0cm},clip]{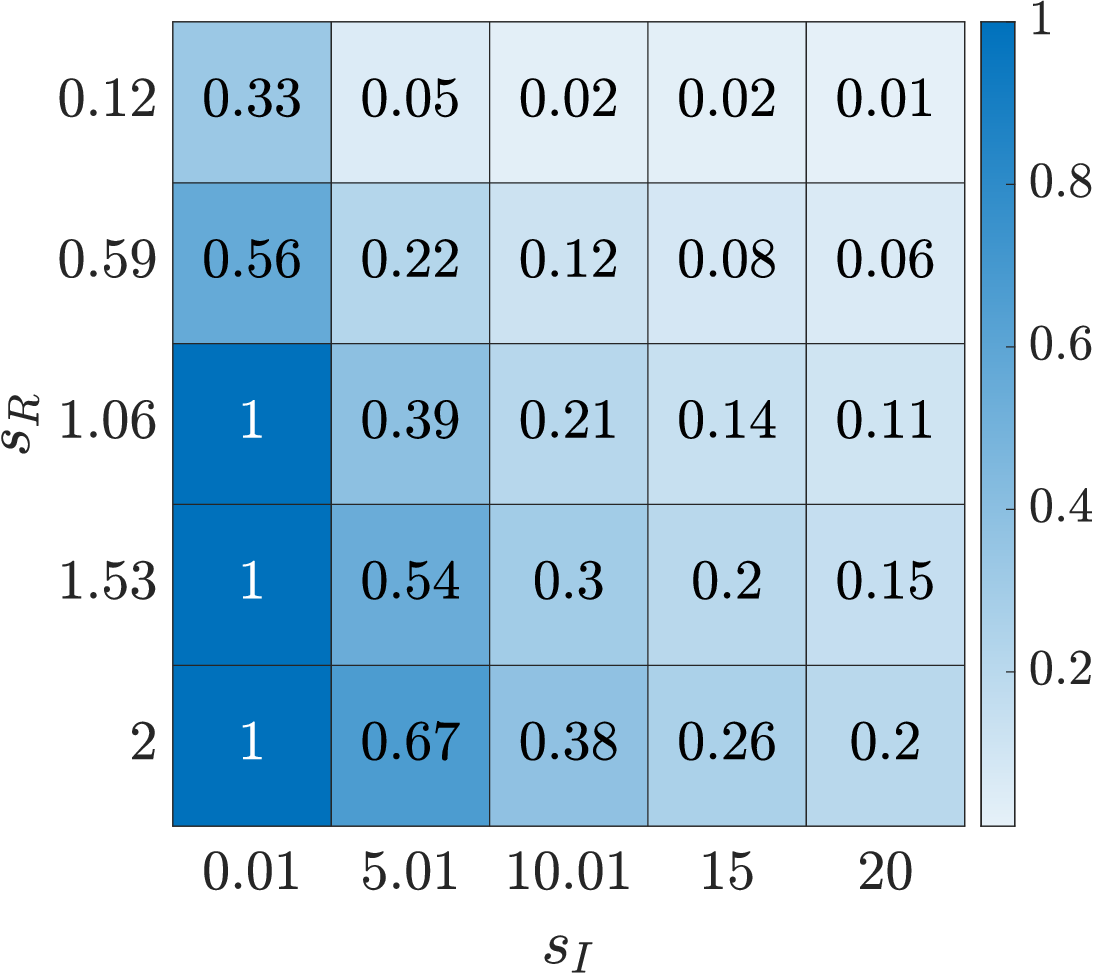}}
\subfloat[$\beta_*(s)$]{\label{subfig:betaunderline}\includegraphics[width=.33\linewidth,trim={00cm 0cm 0.0cm 0cm},clip]{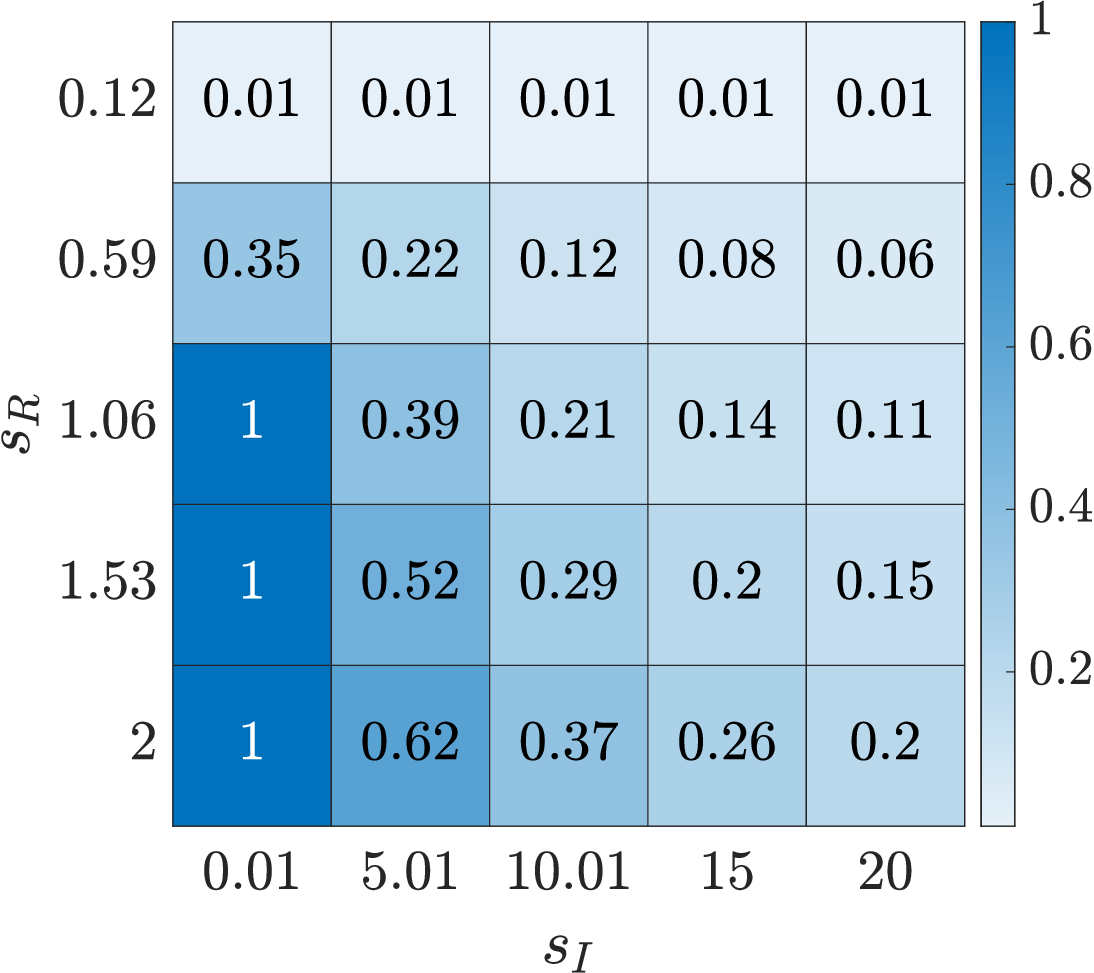}}
\caption{Illustration of discrete counter part of the inf-sup constant in \cref{eq:transposed infsup cont.} shown in (a), the lower-bound $\beta(s)$ from \cref{eq:betaLB cases continous} in (b) and $\beta_*(s)=\min\{1, \reals^2,2\reals/\sqrt{4\reals^2+\imags^2}\}$ shown in (c).}
\label{fig:discreteinfsup}
\end{figure}

\subsection{Supporting result: \cref{remark: POD vs SPOD}} \label{supplementary:remarkSPODPOD}
In \cref{fig:SPOD}, we show that a complex-valued structure of the POD basis functions is more effective to capture the interaction between the real and imaginary parts of the solutions than using real-valued POD basis functions. in contrast to having a real-valued structure of the POD basis functions as employed in \cite{Bigoni2020SimulationbasedMonitoring}. The parameters used are the same as in \cref{sec:reduction in the s parameter results}.

\begin{figure}
   \centering
   \subfloat[Offline phase]{\label{subfig:singular_valuedecay_SPOD}\includegraphics[width=.5\linewidth,trim={0.6cm 0cm 21cm 0cm},clip]{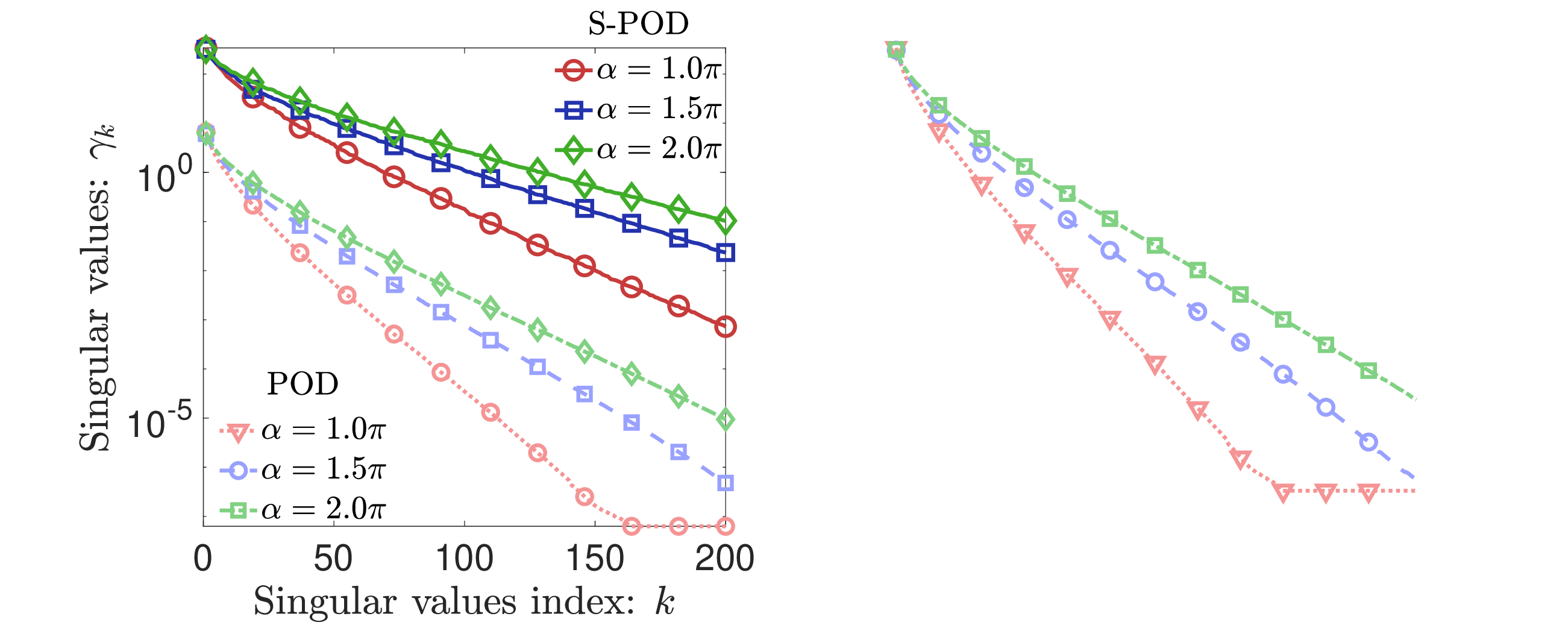}}
   \subfloat[Online phase]{\label{subfig:Mean_error_SPODTD}\includegraphics[width=.5\linewidth,trim={0.6cm 0cm 21cm 0cm},clip]{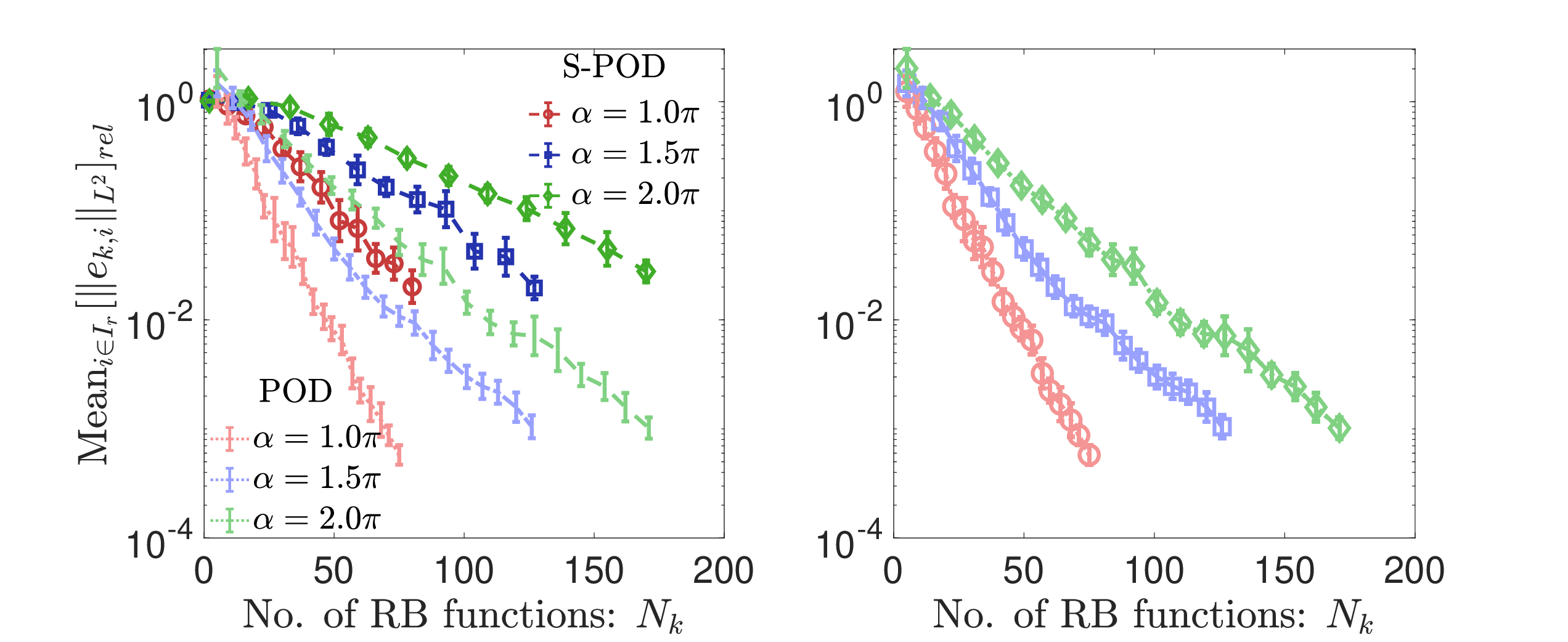}}
   \caption{Comparison of the complex-valued structure of reduced basis matrix with the real-valued structure of reduced basis from \cite{Bigoni2020SimulationbasedMonitoring}. In (a), we plot the decay in the singular values. In (b), we illustrate the mean error with the length indicating standard deviation, for the RB approximation of the time domain seismograms. 
   }
   \label{fig:SPOD}
 \end{figure}

\bibliographystyle{siamplain}